\numberwithin{equation}{section}
\renewcommand{\theequation}{\thesection$\cdot$\arabic{equation}}
\definecolor{my_color}{rgb}{0,0.5,0.5}
\definecolor{MIXT}{rgb}{0.4,0.3,0.6}
 \font\tencyr=wncyr10 
\font\tencyi=wncyi10 
\font\tencysc=wncysc10 
\def\rus{\tencyr\cyracc}
\def\rusi{\tencyi\cyracc}
\def\rusc{\tencysc\cyracc}
\newtheorem{thm}{Theorem}[section] 
\newtheorem{utv}[thm]{Claim}
\newtheorem{lm}[thm]{Lemma}
\newtheorem{cor}[thm]{Corollary}
\newtheorem{prop}[thm]{Proposition}
\theoremstyle{remark}
\newtheorem{rmk}[thm]{Remark}
\theoremstyle{definition}
\newtheorem{ex}[thm]{Example}
\newtheorem*{ex-bn}{Example}
\newtheorem{df}{Definition}
\newtheorem{quest}{Question}
\newenvironment{proof*}
{\noindent {\sl Proof.}\quad }{\hfill
$\square$}
\renewcommand{\theequation}{\thesection ${\cdot}$\arabic{equation}}
\newcommand {\ah}{{\mathfrak a}}
\newcommand {\ce}{{\mathfrak c}}
\newcommand {\de}{{\mathfrak d}}
\newcommand {\f}{{\mathfrak f}}
\newcommand {\g}{{\mathfrak g}}
\newcommand {\h}{{\mathfrak h}}
\newcommand {\ka}{{\mathfrak k}}
\newcommand {\el}{{\mathfrak l}}
\newcommand {\m}{{\mathfrak m}}
\newcommand {\n}{{\mathfrak n}}
\newcommand {\q}{{\mathfrak q}}
\newcommand {\es}{{\mathfrak s}}
\newcommand {\te}{{\mathfrak t}}
\newcommand {\z}{{\mathfrak z}}
\newcommand {\sln}{\mathfrak{sl}_n}
\newcommand {\sov}{\mathfrak{so}(V)}
\newcommand {\sone}{\mathfrak{so}_{2n}}
\newcommand {\some}{\mathfrak{so}_{2m}}
\newcommand {\eus}{\EuScript}
\newcommand {\esi}{\varepsilon}
\newcommand {\ap}{\alpha}
\newcommand {\lb}{\lambda}
\newcommand {\ca}{{\mathcal A}}
\newcommand {\cF}{{\mathcal F}}
\newcommand {\co}{{\mathcal O}}
\newcommand {\BZ}{{\mathbb Z}}
\newcommand {\BR}{{\mathbb R}}
\newcommand {\CSS}{{\sf CSS}}
\newcommand {\sgp}{{\sf g.s.}}
\newcommand {\md}{/\!\!/}
\newcommand {\isom}{\stackrel{\sim}{\longrightarrow}}
\newcommand {\ad}{{\mathrm{ad\,}}}
\newcommand {\codim}{{\mathrm{codim\,}}}
\newcommand {\gr}{{\mathrm{gr\,}}}
\newcommand {\Int}{{\mathrm{Int}}}
\newcommand {\Inv}{{\mathsf{Inv}}}
\newcommand {\Lie}{{\mathsf{Lie}}}
\newcommand {\Ima}{{\mathrm{Im\,}}}
\newcommand {\rk}{{\mathrm{rk\,}}}
\newcommand {\spe}{{\mathsf{Spec\,}}}
\newcommand {\trdeg}{{\mathrm{trdeg\,}}}
\newcommand {\tri}{\mathfrak{sl}_2}
\newcommand {\GR}[2]{{\textrm{{\bf #1}}}_{#2}}
\newcommand {\GRt}[2]{{\tilde{\mathrm{{\bf #1}}}}_{#2}}
\newcommand {\ov}{\overline}
\newcommand {\un}{\underline}
\newcommand {\beq}{\begin{equation}}
\newcommand {\eeq}{\end{equation}}
\renewcommand{\le}{\leqslant}
\renewcommand{\ge}{\geqslant}
\newcommand {\bbk}{\mathbb F}
\begin{document}
\setlength{\parskip}{1pt plus 2pt minus 0pt}
\hfill {\scriptsize April 28, 2011}
\vskip1.5ex

\title
{Commuting involutions and degenerations of isotropy representations}
\author[D.\,Panyushev]{Dmitri I. Panyushev}
\address[]{Institute for Information Transmission Problems of the R.A.S., 
 \hfil\break\indent B. Karetnyi per. 19, Moscow 
127994, Russia}
\email{panyushev@iitp.ru}
\urladdr{\url{http://www.mccme.ru/~panyush}}
\subjclass[2010]{13A50, 14L30, 17B40, 22E46}
\keywords{Semisimple Lie algebra, $\BZ_2$-contraction, Cartan subspace, quaternionic decomposition, algebra of invariants}
\begin{abstract}
Let $\sigma_1$ and $\sigma_2$ be commuting involutions of a semisimple algebraic group
$G$.
This yields a $\BZ_2\times \BZ_2$-grading of 
$\g=\Lie(G)$, $\g=\bigoplus_{i,j=0,1}\g_{ij}$, and we study invariant-theoretic 
aspects of this decomposition. Let $\g\langle\sigma_1\rangle$ be the $\BZ_2$-contraction of $\g$ determined by $\sigma_1$. Then both $\sigma_2$ and $\sigma_3:=\sigma_1\sigma_2$
remain involutions of the non-reductive Lie algebra $\g\langle\sigma_1\rangle$.
The  isotropy representations related to $(\g\langle\sigma_1\rangle, \sigma_2)$ and 
$(\g\langle\sigma_1\rangle, \sigma_3)$
are degenerations of the isotropy representations related to $(\g, {\sigma_2})$ and $(\g, {\sigma_3})$, respectively.
We show that these degenerated isotropy representations retain many good properties. For instance, they always have a generic stabiliser and their algebras of invariants
are often polynomial. We also develop some theory  on Cartan subspaces for various
$\BZ_2$-gradings associated with the $\BZ_2\times \BZ_2$-grading of $\g$.
\end{abstract}
\maketitle

\tableofcontents
\section*{Introduction}
\noindent
The ground field $\bbk$ is algebraically closed and $\mathsf{char\,} F=0$. 
Let $\sigma_1$ and $\sigma_2$ be commuting involutions of a connected semisimple 
algebraic group $G$. This yields a $\mathbb Z_2\times \mathbb Z_2$-grading of 
$\g=\Lie(G)$:
\beq   
\label{eq:decomp0}
\g=\bigoplus_{i,j=0,1}\g_{ij}, \ \text{ where }\ \g_{ij}=\{x\in\g\mid   \sigma_1(x)=(-1)^ix \ \ \& \ \ 
\sigma_2(x)=(-1)^jx\}.
\eeq
Since $\sigma_1,\sigma_2$, and $\sigma_3=\sigma_1\sigma_2$ are  pairwise 
commuting involutions, this decomposition possesses an 
$\mathbb S_3$-symmetry, and following \cite{vergne} we say that \eqref{eq:decomp0}
is a {\it quaternionic decomposition}. 
For, if $(\ap,\beta,\gamma)$ is any permutation of the indices $(01,10,11)$, then 
$[\g_{00},\g_\ap] \subset \g_\ap$ and  $[\g_\ap,\g_\beta] \subset \g_\gamma$.
Various problems related to this structure (in the setting of Lie groups or Lie algebras) 
has been studied before. Let $G^{\sigma_i}$ be  
the fixed-point subgroup of $\sigma_i$. 
In  \cite{matsuki}, Matsuki obtained fundamental results on $(G^{\sigma_1}, G^{\sigma_2})$
double cosets in $G$.
The quotient variety $G\md (G^{\sigma_1}{\,\times\,} G^{\sigma_2})$ 
has been studied by Helminck and Schwarz \cite{helm-gerry1, helm-gerry2}. 
In \cite{vergne}, Vergne applies quaternionic decompositions to the study of the 
Kostant--Sekiguchi correspondence. The conjugacy classes of pairs of commuting 
involutions are classified,
see \cite{kollross} and references therein. In \cite{vinb}, commuting conjugate
involutions $\sigma_1,\sigma_2,\sigma_3$ ({\it triads}) are used in the proof of Rosenfeld's 
conjecture on the existence of elliptic planes over the tensor product of two composition 
algebras.

In this article, we consider some  other 
aspects of  quaternionic decomposition~\eqref{eq:decomp0}.
This decomposition embraces several $\BZ_2$-gradings of $\g$ and
the subalgebras $\g^{\sigma_i}=\Lie (G^{\sigma_i})$. 
The involutions $\sigma_1,\sigma_2,\sigma_3$ are said to be {\it big\/} and  the 
induced involutions of $\g^{\sigma_1},\g^{\sigma_2},\g^{\sigma_3}$ are said to be {\it little\/}.
The same terminology applies to the corresponding $\BZ_2$-gradings, symmetric spaces,
and Cartan subspaces (= \CSS\ for short).
Our ultimate goal is to elaborate on invariant-theoretic
properties of degenerations of  isotropy representations involved. To this end, 
we develop some theory on the corresponding \CSS\ and on 
the triples of commuting involutions containing conjugate involutions. 

A {\it dyad} is a pair of different commuting conjugate involutions. 
Any dyad gives rise to a quaternionic decomposition with additional
"symmetry". For a fixed $\sigma_1$, we 
provide a universal construction of dyads $\{\sigma_1,\sigma_2\}$, using automorphisms of 
order 4 (Prop.~\ref{prop:converse}). This implies that 
every involution can be a member of a dyad.
We also describe
all possible conjugacy classes of involutions $\sigma_3=\sigma_1\sigma_2$ via the 
restricted  root system of $G/G^{\sigma_1}$.
\\  \indent
or $\sigma_3$.
Let $\ce_{\ap}$ be a \CSS\ of $\g_{\ap}$ (i.e., $\ce_\ap$ is a little \CSS). 
Then $\ce_{\ap}$ is contained in
a \CSS\ of  $\g_{\ap}\oplus\g_{\gamma}$ or $\g_{\ap}\oplus\g_{\beta}$
(i.e., of the $(-1)$-eigenspaces of  two big involutions).
If  at least one  such embedding
appears to be an equality, then we call it a {\it coincidence of Cartan subspaces}. 
Clearly, there are totally six possibilities for such coincidences.
This  can also be expressed as the equality of ranks for certain little and big symmetric
spaces. 
We prove two sufficient conditions for a coincidence of \CSS:
\\       
{\textbullet} \ If $\sigma_1$ is an involution of maximal rank, then
 $\ce_{10}$ is  also a \CSS\ in $\g_{01}\oplus\g_{10}$ and
$\ce_{11}$ is  also a \CSS\ in $\g_{01}\oplus\g_{11}$ 
(Theorem~\ref{thm:sovpad1}).
\\   
{\textbullet} \ If $\{\sigma_1,\sigma_2\}$ is a dyad, 
then $\ce_{11}$ is a \CSS\ in both
$\g_{01}\oplus\g_{11}$ and $\g_{10}\oplus\g_{11}$ (Theorem~\ref{thm:sovpad2}).

Let $G_{0\star}$ denote the connected subgroup of $G$ with Lie algebra $\g_{0\star}:=
\g_{00}\oplus\g_{01}$,
and $\g_{1\star}:=\g_{10}\oplus\g_{11}$. Then
$(G_{0\star}{:} \g_{1\star})$ is the isotropy representation associated with 
the (big) symmetric space $G/G_{0\star}$. 
We observe that each big symmetric space of $G$ admits two degenerations to
symmetric spaces of non-reductive algebraic groups; accordingly,
each big isotropy representation also admits two degenerations. 
The corresponding non-reductive Lie algebras are $\BZ_2$-contractions of $\g$ in 
the sense of \cite{rims07,coadj07}.
Let $\g\langle\sigma_i\rangle$ be the $\BZ_2$-contraction of $\g$ determined by $\sigma_i$. For instance, $\g\langle\sigma_2\rangle=\g_{\star 0}\ltimes\g_{\star 1}^a$, where
$\g_{\star 0}=\g_{00}\oplus\g_{10}$ and $\g_{\star 1}=\g_{01}\oplus\g_{11}$ are the eigenspaces of $\sigma_2$, and the superscript `$a$' means that $\g_{\star 1}$ is regarded an abelian ideal.  The corresponding connected group is $G\langle\sigma_2\rangle=
G_{\star 0}\ltimes \exp(\g_{\star 1}^a)$.
Remarkably, both $\sigma_1$ and $\sigma_3$ remain involutions of the {\it group\/} 
$G\langle\sigma_2\rangle$ and  {\it algebra\/}  $\g\langle\sigma_2\rangle$. 
Then
$\g\langle\sigma_2\rangle^{\sigma_1}=\g_{00}\ltimes\g_{01}^a=:\ka$, which is the
$\BZ_2$-contraction of $\g^{\sigma_1}=\g_{0\star}$ determined by $\sigma_2$, i.e., 
$\g\langle\sigma_2\rangle^{\sigma_1}=\g^{\sigma_1}\langle\sigma_2\rangle$.
The $(-1)$-eigenspace of $\sigma_1$ 
in $\g\langle\sigma_2\rangle$ is the $\ka$-module denoted by $\g_{10}{\,\propto\,}\g_{11}$.
Likewise, starting with the $\BZ_2$-contraction
$\g\langle\sigma_3\rangle$, we still obtain $\g\langle\sigma_3\rangle^{\sigma_1}=\ka$,  
whereas  the $(-1)$-eigenspace of $\sigma_1$ in $\g\langle\sigma_3\rangle$ is
the $\ka$-module denoted by $\g_{11}{\,\propto\,}\g_{10}$.
\\ \indent
Let  $G_{00}$ be the connected subgroup of $G$ with Lie algebra $\g_{00}$ and
$N_{01}:=\exp(\g_{01}^a)$.  Then 
$K=G_{00}\ltimes N_{01}$ is  the identity component of  both $G\langle\sigma_2\rangle^{\sigma_1}$ and $G\langle\sigma_3\rangle^{\sigma_1}$,
and  $\Lie(K)=\ka$. 
The symmetric spaces $G\langle\sigma_2\rangle /K$ and $G\langle\sigma_3\rangle/K$
can be regarded as degenerations of $G/G_{0\star}$, and
we prove that $G\langle\sigma_2\rangle /K$ and $G\langle\sigma_3\rangle/K$ are affine
varieties.
The $K$-modules  $\g_{10}{\,\propto\,}\g_{11}$ and $\g_{11}{\,\propto\,}\g_{10}$ have the same underlying
vector space $\g_{1\star}$ and afford the same action of the reductive subgroup
$G_{00}\subset K$. But the actions of the  unipotent radical 
$N_{01}\subset K$ are different (see Section~\ref{sect4} for precise formulae).
We show that 
$V:=\g_{10}{\,\propto\,}\g_{11}$ and 
$V^*:=\g_{11}{\,\propto\,}\g_{10}$ are dual $K$-modules. 
Thus, the isotropy representations
$(K:V)$ and $(K:V^*)$ are different degenerations of 
$(G_{0\star}: \g_{1\star})$.
\\ \indent
Altogether, there are six degenerations of three big isotropy representations, and we study
invariant-theoretic properties of these {\it degenerated isotropy representations}.  
This generalises the setting of \cite{coadj07}, where 
the coadjoint representation of $\BZ_2$-contractions is studied.
The main idea behind our considerations is that, although $K$ is non-reductive, 
$(K{:}V)$ retains many good 
invariant-theoretic properties of  $(G_{0\star}: \g_{1\star})$.
Coincidences of \CSS\ fit in this setting as follows.
Suppose that $\ce_{10}$ is also a \CSS\ in $\g_{10}\oplus\g_{11}$. Then
$\bbk[V]^{N_{01}}\simeq \bbk[\g_{10}]$, 
$\bbk[V]^K\simeq \bbk[\g_{10}]^{G_{00}}$, and $\bbk[V]$ is a free 
$\bbk[V]^K$-module (Theorem~\ref{thm:no-extra-inv}). Moreover, 
if  $\sigma_1,\sigma_2$, and $\sigma_3$ 
are conjugate, then all conceivable coincidences of \CSS\ do occur, and 
such a simple description of invariants applies to all six degenerated isotropy representations.

Most of the quaternionic decompositions have at least one coincidence of \CSS.
But this is not always the case, and  examples are given in Section~\ref{sect3}.
In general, we prove that {\sf (i)} the $K$-module $V$ always has a generic stabiliser,
\  {\sf (ii)}  $\trdeg \bbk(V)^K=\trdeg \bbk(\g_{1\star})^{G_{0\star}}$,  and 
\ {\sf (iii)}  $\bbk(V)^K$ is the fraction field of $\bbk[V]^K$
(Theorem~\ref{thm:sgp-i-pgp}). Hence 
this degeneration does not affect the transcendence degree of fields and algebras 
of invariants. Furthermore, the algebra $\bbk[V]^K$ is bi-graded and there is a 
`contraction method' for 
obtaining $K$-invariants on $V$ and $V^*$ from 
$\bbk[\g_{1\star}]^{G_{0\star}}$. Using this method,
we describe $\bbk[V]^K$  under less restrictive assumptions than a coincidence 
of \CSS\ (Theorem~\ref{thm:main5}).

As a by-product of our methods, we prove polynomiality of the algebras of invariant differential operators for many degenerations of big symmetric spaces. We also show that results of \cite{gonz-helg}
on `Invariant differential operators on Grassmann manifolds'
can be better understood in the framework of quaternionic decomposition, see
Section~\ref{sect6}. 

Throughout, $G$ is a connected semisimple  algebraic group and
$\g=\Lie(G)$. 

--  \  $\n_\g(\ah)$ (resp. $\z_\g(\ah)$) is the normaliser (resp. centraliser) 
of a subspace  $\ah\subset\g$.               

-- \ the centraliser in $\g$ of $x\in\g$ is denoted by $\z_\g(x)$ or $\g^x$.

-- \ If $X$ is an irreducible variety, then $\bbk[X]$ is the algebra of regular functions 
and $\bbk(X)$ is the field of rational functions on $X$. If $X$ is acted upon by an algebraic group $\ca$, then $\bbk[X]^\ca$ and $\bbk(X)^\ca$ denote the respective $\ca$-invariant functions.

-- \ If $\bbk[X]^\ca$ is finitely generated, then $X\md \ca:=\spe(\bbk[X]^\ca)$.

\section{Generalities on involutions and isotropy representations}
\label{sect1}

\noindent
Our main object is a connected semisimple algebraic group $G$ with Lie algebra $\g$.
The set of all involutions of $\g$ is denoted by $\mathsf{Inv}(\g)$. 
The group of inner automorphisms  
$\mathsf{Int}(G)\simeq G/Z(G)$ acts on $\mathsf{Inv}(\g)$ by conjugation.
Two involutions are said to be {\it conjugate}, if they lie in the same $\mathsf{Int}(G)$-orbit.
If $\sigma\in\mathsf{Inv}(\g)$, then 
$\g=\g_0\oplus\g_1$ is the corresponding $\mathbb Z_2$-grading of
$\g$. That is, $\g_i=\{x\in\g\mid \sigma(x)=(-1)^ix\}$. We also say that 
$(\g,\g_0)$ is a {\it symmetric pair}.
Whenever we wish to stress that $\g_0$ and $\g_1$ 
are determined by $\sigma$, we write $\g^\sigma$ and $\g_1^{(\sigma)}$ for them.
We assume that $\sigma$ is induced by an involution of $G$, which is denoted by the same letter.
The connected subgroup of $G$ with Lie algebra $\g_0$ is denoted by $G_0$, while the fixed-point subgroup of $\sigma$ is denoted by $G^\sigma$. Hence $G_0$ is the identity 
component of $G^\sigma$, and $G_0=G^\sigma$ if $G$ is simply-connected.
The representation of $G_0$ in $\g_1$, denoted $(G_0:\g_1)$, is  the {\it isotropy representation\/} 
of the symmetric space $G/G_0$.

We freely use invariant-theoretic results on the  $G_0$-action 
on $\g_1$ or  $G/G_0$, as exposed in \cite{kr71},\cite{ri82}.  
A {\it Cartan subspace\/} (=\CSS) is a maximal subspace of $\g_1$ consisting of pairwise 
commuting semisimple elements. All \CSS\ of $\g_1$ are $G_0$-conjugate and their 
common dimension is called the {\it rank\/} of the symmetric space $G/G_0$, denoted
$\rk(G/G_0)$.
 
The Cartan subspaces are characterised by the following property: 
\begin{itemize}
\item[\refstepcounter{equation}(\theequation)\label{char-prop}]   
{\it Suppose that a subspace $\ah\subset \g_1$ consists of pairwise commuting
semisimple elements. Then $\ah$ is a \CSS\ if and only if $\z_\g(\ah)\cap\g_1=\ah$}
\ \cite[Ch.\,I]{kr71}.
\end{itemize}

\noindent
Let $\ce$ be a \CSS\ of $\g_1$. Then every semisimple element of $\g_1$ is 
$G_0$-conjugate to an element of $\ce$ and $G_0{\cdot}\ce$ is dense in $\g_1$.
The {\it generalised Weyl group\/} for $\ce$, $W_0$, is defined to be $N_0(\ce)/Z_0(\ce)$, where
$N_0(\ce)=\{ s\in G_0\mid s{\cdot}\ce\subset \ce\}$ and
$Z_0(\ce)=\{ s\in G_0\mid s{\cdot}x=x \ \forall x\in\ce\}$. 
An element $x\in\g_1$ is called 
$G_0$-{\it regular\/} if the orbit $G_0{\cdot}x$ is of maximal dimension.
Below, we summarise  basic invariant-theoretic properties of the isotropy representation:
\begin{itemize}
\item[\sf --]   Let $x\in\g_1$. 
The orbit $G_0{\cdot}x$ is closed if and only if 
$G_0{\cdot}x\cap\ce\ne\varnothing$;
\item[\sf --]  $x\in\g_1$ is $G_0$-regular and semisimple \ $\Leftrightarrow$ \ $\z_\g(x)\cap\g_1$ is a \CSS;
\item[\sf --]  Each fibre of the quotient morphism $\pi:\g_1\to
\g_1\md G_0=\spe(\bbk[\g_1]^{G_0})$ consists of finitely many $G_0$-orbits. 
The dimension of each fibre equals $\dim\g_1-\dim\ce$.
\item[\sf --]  The restriction of polynomial functions
 $\bbk[\g_1]\to \bbk[\ce]$
induces an isomorphism $\bbk[\g_1]^{G_0}\isom \bbk[\ce]^{W_0}$
(Chevalley's restriction theorem); 
\item[\sf --]  $W_0$ is a  finite reflection group in $GL(\ce)$. Hence $\bbk[\g_1]^{G_0}$ is a 
polynomial algebra and $\g_1\md G_0$ is an affine space of dimension $\dim\ce$; 
\item[\sf --]  $\bbk[\g_1]$ is a free $\bbk[\g_1]^{G_0}$-module.
\end{itemize}
A torus $S$ of $G$ is 
$\sigma$-{\it anisotropic\/}, if $\sigma(s)=s^{-1}$ for all $s\in S$. 
A \CSS\ is  the Lie algebra of a maximal $\sigma$-anisotropic  torus.

\textbullet\quad 
We say that $\sigma\in\mathsf{Inv}(\g)$ is 
{\it  of maximal rank\/} if $\g_1$ contains  a  Cartan subalgebra of $\g$.

\noindent
As is well known, $\dim\g_1-\dim\g_0\le \rk\g$  for any $\sigma$, and the equality holds
if and only if $\sigma$ is of maximal rank.

\textbullet\quad 
Let $\sigma$ be an \un{inner} involution of $\g$. We say that $\sigma$ is 
{\it  quasi-maximal\/} if $\g_1$ contains  a regular semisimple element of $\g$.

\begin{lm}  \label{lm:N=S}
Given $\sigma\in\mathsf{Inv}(\g)$,
the subspace $\g_1$ contains  a regular semisimple element of $\g$
if and only if it contains a regular nilpotent element of $\g$.
\end{lm}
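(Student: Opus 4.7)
The plan is to prove the two implications separately; each reduces to constructing an appropriate $\sigma$-stable principal $\sln_2$-triple in $\g$.

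For $(\Leftarrow)$, given $e\in\g_1$ a $G$-regular nilpotent, I would invoke the $\sigma$-equivariant Jacobson--Morozov theorem (obtained by averaging $h_0:=\tfrac12(h+\sigma h)$ in any completion of $e$ and using Mal'cev uniqueness of $f$) to extend $e$ to a principal $\sln_2$-triple $(e,h,f)$ with $h\in\g_0$ and $f\in\g_1$; then $h=2\rho^\vee$ is $G$-regular semisimple. Inside the principal subalgebra $\mathfrak{s}:=\bbk e+\bbk h+\bbk f\cong\sln_2$, both $h$ and $y:=e+f$ act on the defining two-dimensional representation with spectrum $\{\pm1\}$ and are therefore conjugate inside the principal $SL_2\subset G$; hence $y\in\g_1$ is $G$-regular semisimple.

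For $(\Rightarrow)$, given $x\in\g_1$ a $G$-regular semisimple element, the Cartan $\te:=\z_\g(x)$ is $\sigma$-stable (since $\sigma(\te)=\z_\g(-x)=\te$) with $x\in\te_1$, and no root of $(\g,\te)$ is $\sigma$-fixed (such a root would vanish on $\te_1\ni x$, contradicting $\alpha(x)\neq0$). So the restricted root system $\Sigma\subset\te_1^*$ of nonzero restrictions $\bar\alpha:=\alpha|_{\te_1}$ is a genuine root system, and for any positive system $\Sigma^+\subset\Sigma$ the lift $\Delta^+:=\{\alpha\in\Delta:\bar\alpha\in\Sigma^+\}$ is a positive system of $\Delta$ with $\sigma(\Delta^+)=-\Delta^+$; the induced involution on simple roots, $\tau:\Pi\to\Pi$, is defined by $\sigma(\alpha)=-\tau(\alpha)$. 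Next I would construct a $\sigma$-stable principal $\sln_2$-triple $(E,H,F)$ satisfying $\sigma(E)=-F$, $\sigma(F)=-E$, and $\sigma(H)=-H$: take $E:=\sum_{\alpha\in\Pi}e_\alpha$ and $F:=\sum_\alpha n_\alpha f_\alpha$ with the principal coefficients $n_\alpha$ determined by $\sum n_\alpha h_\alpha=H=2\rho^\vee$, and normalise the Chevalley pairs $(e_\alpha,f_\alpha)$ orbit by orbit under $\tau$ to enforce $\sigma(e_\alpha)=-n_{\tau(\alpha)}f_{\tau(\alpha)}$, with consistency guaranteed by $\sigma^2=1$.

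The resulting three-dimensional subalgebra $\mathfrak{s}:=\bbk E+\bbk H+\bbk F$ is $\sigma$-stable; under the isomorphism $\mathfrak{s}\cong\sln_2$ sending $(E,H,F)$ to the standard Chevalley generators $(E_{12},E_{11}-E_{22},E_{21})$, the induced involution on $\mathfrak{s}$ is identified with $X\mapsto-X^\top$, and the element $\widetilde e:=H+\sqrt{-1}(E+F)\in\mathfrak{s}\cap\g_1$ corresponds to the symmetric trace-zero matrix $\bigl(\begin{smallmatrix}1&\sqrt{-1}\\\sqrt{-1}&-1\end{smallmatrix}\bigr)$, which squares to zero. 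Hence $\widetilde e$ is a nonzero nilpotent of $\mathfrak{s}$, and since $\mathfrak{s}$ is a principal $\sln_2$-subalgebra of $\g$ (so any of its nonzero nilpotents is $G$-conjugate to the principal $E$), $\widetilde e$ is $G$-regular nilpotent in $\g_1$. The main obstacle will be the orbit-by-orbit normalisation of the Chevalley generators: while elementary, it requires separate treatment of $\tau$-fixed and $\tau$-paired simple roots together with careful verification of the consistency forced by $\sigma^2=1$.
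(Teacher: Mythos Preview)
Your argument is correct, and the $(\Leftarrow)$ direction is essentially identical to the paper's (normal $\tri$-triple, then $e+f$ is regular semisimple). The $(\Rightarrow)$ direction, however, is handled very differently.

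The paper does not construct anything: it observes that a $G_{0\star}$-regular semisimple $x\in\g_1$ has $G_0$-orbit of maximal dimension $\dim\g_1-\dim\ce$; since the null fibre of $\pi:\g_1\to\g_1\md G_0$ has this same dimension and consists of finitely many $G_0$-orbits, there exists a nilpotent $y\in\g_1$ with $\dim G_0{\cdot}y=\dim G_0{\cdot}x$. Then the Kostant--Rallis identity $\dim G{\cdot}z=2\dim G_0{\cdot}z$ for $z\in\g_1$ forces $\dim G{\cdot}y=\dim G{\cdot}x=\dim\g-\rk\g$, so $y$ is $G$-regular. This is a two-line dimension count, leaning entirely on \cite{kr71}.

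Your route is an explicit construction: a $\sigma$-adapted principal $\tri$ built from a $\sigma$-stable Cartan and a careful normalisation of Chevalley generators along $\tau$-orbits, yielding $\tilde e=H+\sqrt{-1}(E+F)\in\g_1$ regular nilpotent. The normalisation does go through (the key hidden compatibilities $n_{\tau(\alpha)}=n_\alpha$ and, for $\tau(\alpha)=\beta\ne\alpha$, $c_\alpha=c_\beta$ both follow from $\sigma(h_\alpha)=-h_{\tau(\alpha)}$ and $\sigma^2=\mathsf{id}$), so the case split you flag as the ``main obstacle'' is genuinely routine. What you gain is an explicit representative of the regular nilpotent orbit in $\g_1$, at the cost of a much longer proof; what the paper's argument buys is brevity and the observation that the result is really a formal consequence of equidimensionality of the isotropy quotient plus the orbit-doubling formula.
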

\begin{proof}
For $e\in \g_1$ nilpotent,  there is a normal $\tri$-triple $\{e,h,f\}$, i.e., such that $e,f\in\g_1$ and $h\in\g_0$ \cite[Prop.\,4]{kr71}. If $e$ is regular, then 
$e+f$ is regular semisimple (conjugate to $h$).

Conversely, suppose that $x\in\g_1$ is regular semisimple. 
Then $G_0{\cdot}x$ is a  $G_0$-orbit of maximal dimension and there is also a nilpotent 
element $y\in\g_1$ whose $G_0$-orbit has the same dimension.  Since 
$\dim G{\cdot}z=2\dim G_0{\cdot}z$ for all $z\in\g_1$ \cite[Prop.\,5]{kr71}, we see that $y$ is
regular in $\g$.
\end{proof}
\begin{rmk}  \label{rmk:leva}
It follows from Lemma~\ref{lm:N=S} and \cite[Theorem\,2.3]{theta05} that the quasi-maximal involutions, as well as involutions of maximal rank, form a single $\mathsf{Int}(G)$-orbit.
\end{rmk}

Let $k_0$ (resp. $k_1$) denote  the number of even (resp. odd) exponents of $\g$, so that
$k_0+k_1=\rk\g$.

\begin{prop}    \label{k0-and-k1}
(i)  \ If $\sigma$ is of maximal rank, then $\rk\g_0=k_1$.
\\
(ii) \ If $\sigma$ is quasi-maximal, then $\dim\g_0-\dim\g_1=k_0-k_1$ and 
$\dim(\g_1\md G_0)=k_1$.
\end{prop}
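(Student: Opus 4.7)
My plan is to reduce each part to a convenient representative via Remark~\ref{rmk:leva}, then to perform an explicit computation tailored to that representative.

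For part~(ii), take the canonical quasi-maximal inner involution $\sigma=\Ad(\exp(\pi\sqrt{-1}\,\rho^\vee))$, with $\rho^\vee$ the half-sum of positive coroots. Then $\sigma|_\h=\mathrm{Id}$ and $\sigma$ acts on $\g_\alpha$ by $(-1)^{\mathrm{ht}\,\alpha}$, so the principal nilpotent $e=\sum e_{\alpha_i}$ lies in $\g_1$. Writing $n_k$ for the number of positive roots of height $k$, one has $\dim\g_0=\rk\g+2\sum_{k\,\mathrm{even}}n_k$ and $\dim\g_1=2\sum_{k\,\mathrm{odd}}n_k$. Kostant's identity $n_k=\#\{i:m_i\geq k\}$ then rewrites the alternating sum as
\[
\sum_{k\geq 1}(-1)^{k+1}n_k=\sum_{i=1}^r\sum_{j=1}^{m_i}(-1)^{j+1}=\#\{i:m_i\text{ odd}\}=k_1,
\]
yielding $\dim\g_0-\dim\g_1=\rk\g-2k_1=k_0-k_1$. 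For the Cartan-subspace dimension, combine this with the identity $\dim G{\cdot}x=2\dim G_0{\cdot}x$ (used in the proof of Lemma~\ref{lm:N=S}) at a regular $x\in\g_1\cap\g^{\mathrm{reg}}$: this gives $\dim\g-\rk\g=2(\dim\g_1-\dim\ce)$, from which $\dim\ce=k_1$.

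For part~(i), take $\sigma$ to be the Chevalley involution, $\sigma(h)=-h$ and $\sigma(e_\alpha)=-e_{-\alpha}$, and use the principal $\tri$-triple $\{e,h=2\rho^\vee,f\}$ with $\tri$-module decomposition $\g=\bigoplus_i V(2m_i)$. The element $y:=e-f$ is regular semisimple in $\g$ and $\sigma$-fixed, so $\h'':=\z_\g(y)$ is a $\sigma$-stable Cartan subalgebra of $\g$, meeting each $V(2m_i)$ in the $1$-dimensional $\ad(y)$-kernel. Modeling $V(2m)\cong S^{2m}\bbk^2$, this kernel is spanned by $(x^2+y^2)^m$. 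A direct calculation using the $\sigma$-equivariant involution of $V(2m)$, normalized so that the middle weight vector (which lies in $\h$) has $\sigma$-eigenvalue $-1$, produces eigenvalue $(-1)^{m+1}$ on this line. Hence
\[
\dim(\h''\cap\g_0)=\#\{i:m_i\text{ odd}\}=k_1.
\]
Since $y\in\h''\cap\g_0$ is regular in $\g$, $\z_{\g_0}(\h''\cap\g_0)\subseteq\z_\g(y)\cap\g_0=\h''\cap\g_0$, so $\h''\cap\g_0$ is a Cartan subalgebra of $\g_0$ and $\rk\g_0=k_1$.

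The main obstacle is the sign determination in part~(i): one must show the $\sigma$-eigenvalue on $(x^2+y^2)^m$ is $(-1)^{m+1}$. The underlying unsigned involution $\phi(x^ky^{2m-k})=(-1)^kx^{2m-k}y^k$ visibly fixes $(x^2+y^2)^m$ by the binomial identity; the correcting scalar $(-1)^{m+1}$ comes from matching $\sigma|_\h=-\mathrm{Id}$ against $\phi(x^my^m)=(-1)^m x^my^m$, i.e., pinning down the overall normalization of the unique (up to scalar) $\sigma$-equivariant involution of the irreducible $\tri$-module $V(2m)$.
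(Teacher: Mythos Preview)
Your argument is correct and takes a genuinely different route from the paper, which simply cites \cite[Lemma in p.\,1473]{aif99} for (i) and \cite[Theorem\,3.3]{theta05} for (ii). For (ii), the height-counting via Kostant's identity $n_k=\#\{i:m_i\ge k\}$ is clean, and the passage to $\dim\ce=k_1$ from $\dim G{\cdot}x=2\dim G_0{\cdot}x$ is legitimate once one notes that for a $G_0$-regular semisimple $x$ the fibre of $\pi$ through $x$ is the single orbit $G_0{\cdot}x$, whence $\dim G_0{\cdot}x=\dim\g_1-\dim\ce$; this is contained in the Kostant--Rallis facts recalled in Section~\ref{sect1}. What your approach buys over the citations is a fully elementary derivation inside the paper's framework.

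For (i), two small points need to be made explicit. First, ``$y=e-f$ is $\sigma$-fixed'' requires $\sigma(e)=-f$, which is \emph{not} automatic for the naive choice $e=\sum e_{\alpha_i}$; one must take $e=\sum\sqrt{c_i}\,e_{\alpha_i}$ and $f=\sum\sqrt{c_i}\,e_{-\alpha_i}$, where $2\rho^\vee=\sum c_i\alpha_i^\vee$, so that indeed $\sigma(e)=-\sum\sqrt{c_i}\,e_{-\alpha_i}=-f$. (Equivalently: pick a normal $\tri$-triple for $\sigma$ through a principal nilpotent, as in the proof of Lemma~\ref{lm:N=S}.) Second, when an exponent repeats (type $\GR{D}{2n}$), $\sigma$ need not preserve an individual irreducible summand $V(2m)$ in a chosen decomposition, so the model computation does not literally apply summand by summand. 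This is harmless: on the isotypic block $V(2m)\otimes\bbk^k$ one has $\sigma=\phi_0\otimes A$ with $\phi_0$ the unique (up to scalar) $\tau$-intertwiner you computed and $A^2=1$; the constraint $\sigma=-\mathrm{Id}$ on the weight-$0$ space $(\subset\h)$ reads $(-1)^m A=-I_k$, so $A=(-1)^{m+1}I_k$ and $\sigma$ acts on the whole $\ad(y)$-kernel of that block by the scalar $(-1)^{m+1}$, exactly as you claim. With these two clarifications your proof of (i) is complete.
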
\begin{proof}
Part (i) is proved in \cite[Lemma in p.\,1473]{aif99}. \\
In view of Lemma~\ref{lm:N=S}, part (ii)  follows from 
\cite[Theorem\,3.3]{theta05} with $m=2$.
\end{proof}

The quasi-maximal involutions and involutions of maximal rank coincide {\sl if and only if\/} the Weyl group of $\g$ contains $-1$ {\sl if and only if\/} all exponents of $\g$ are odd.
The property of having maximal rank is inheritable in the following sense.

\begin{lm}   \label{lm:inherit}
Let $x\in\g_1$ be semisimple.
If $\sigma$ is of maximal rank, then the restriction of $\sigma$ to  $\z_\g(x)$ and 
$[\z_\g(x),\z_\g(x)]$ is also of maximal rank.
\end{lm}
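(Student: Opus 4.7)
The plan is to use the conjugacy of Cartan subspaces to move $x$ into a convenient Cartan subalgebra and then intersect with the semisimple part of the centraliser.

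First, since $\sigma$ is of maximal rank, there exists a Cartan subalgebra $\te$ of $\g$ with $\te\subset\g_1$. Being an abelian subspace of $\g_1$ consisting of pairwise commuting semisimple elements and maximal among such (any larger commutative toral subalgebra would contradict $\te$ being a Cartan subalgebra of $\g$), $\te$ is a \CSS\ of $\g_1$. By the $G_0$-conjugacy of Cartan subspaces and the fact that every semisimple element of $\g_1$ lies in some \CSS, we may replace $x$ by a $G_0$-conjugate and thereby assume $x\in\te$; this replacement does no harm, because the claim of the lemma is manifestly $G_0$-equivariant.

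Second, since $x\in\te$, we have $\te\subset\z_\g(x)$. As $\te$ is a maximal toral subalgebra of $\g$, it is automatically a maximal toral subalgebra, hence a Cartan subalgebra, of $\z_\g(x)$. Thus $\z_\g(x)$ contains a Cartan subalgebra lying inside $(\z_\g(x))_1=\z_\g(x)\cap\g_1$, which means that $\sigma|_{\z_\g(x)}$ is of maximal rank.

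Third, for the derived subalgebra $\es:=[\z_\g(x),\z_\g(x)]$, I would argue as follows. Because $x\in\g_1$, the centraliser $\z_\g(x)$ is $\sigma$-stable, and therefore so are both its centre $\z(\z_\g(x))$ and its semisimple part $\es$. We have the $\sigma$-stable decomposition
\[
\te\ =\ \bigl(\te\cap \z(\z_\g(x))\bigr)\oplus\bigl(\te\cap \es\bigr),
\]
in which the second summand is a Cartan subalgebra of $\es$. Since $\sigma$ acts as $-1$ on the whole of $\te\subset\g_1$, it acts as $-1$ on each $\sigma$-stable summand; in particular $\te\cap\es\subset\g_1$. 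This exhibits a Cartan subalgebra of $\es$ inside the $(-1)$-eigenspace of $\sigma|_\es$, proving that $\sigma|_{\es}$ is of maximal rank as well.

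There is essentially no obstacle in this argument — the whole content is packaged in the conjugacy of Cartan subspaces plus the observation that $\sigma$-stability of the two canonical summands of $\z_\g(x)$ lets one split the $(-1)$-eigenspace compatibly. The only point that deserves attention is the verification that $\te\cap\es$ is indeed a Cartan subalgebra of $\es$, which is a standard fact (the intersection of a Cartan subalgebra of a reductive Lie algebra with its derived subalgebra is a Cartan subalgebra of the latter).
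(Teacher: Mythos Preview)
Your proof is correct. The paper actually states this lemma without proof, treating it as a straightforward consequence of the basic theory; your argument---conjugating $x$ into a Cartan subalgebra $\te\subset\g_1$ via the $G_0$-conjugacy of \CSS, then observing that $\te$ (respectively $\te\cap\es$) is a Cartan subalgebra of $\z_\g(x)$ (respectively of $\es$) lying in the $(-1)$-eigenspace---is precisely the natural one. One cosmetic remark: since any Cartan subalgebra of a reductive Lie algebra contains the centre, you in fact have $\te\cap\z(\z_\g(x))=\z(\z_\g(x))$, so your direct-sum decomposition of $\te$ is simply $\te=\z(\z_\g(x))\oplus(\te\cap\es)$.
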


\noindent
{\bf Warning.} The corresponding assertion for quasi-maximal involutions is not always true.

\section{Quaternionic decompositions, triads,  and  dyads}  
\label{sect2}

\noindent
Let $\sigma_1$ and $\sigma_2$ be different commuting involutions of $\g$. 
The corresponding $\mathbb Z_2\times\mathbb Z_2$-grading of $\g$ is:
\beq  \label{eq:quatern_summa}
\g=\bigoplus_{i,j=0,1}\g_{ij}, \ \text{ where }\ \g_{ij}=\{x\in\g\mid   \sigma_1(x)=(-1)^ix \ \ \& \ \ 
\sigma_2(x)=(-1)^jx\}.
\eeq
We also say that it is a {\it quaternionic decomposition\/} of $\g$ 
(determined by $\sigma_1$ and $\sigma_2$). Let $\sigma_3:=\sigma_1\sigma_2$.
The involutions $\sigma_1,\sigma_2$, and $\sigma_3$ are said to be {\it big}.
The involutions induced on the fixed-point subalgebras of big involutions are said to be {\it
little}. The same terminology applies to the corresponding $\BZ_2$-gradings, \CSS, etc.
Thus, associated with \eqref{eq:quatern_summa}, one has three big 
and three little isotropy representations.
It is convenient for us to organise the summands of  \eqref{eq:quatern_summa}
in a $2\times 2$ ``matrix'':

\hbox to \textwidth{\enspace \refstepcounter{equation} (\theequation) 
\hfil   \label{eq:quatern_matrix}
{\setlength{\unitlength}{0.02in}
\begin{picture}(35,27)(0,5)
\put(-12,7){$\g=$}
    \put(8,15){$\g_{00}$}    \put(28,15){$\g_{01}$}
    \put(8,3){$\g_{10}$}      \put(28,3){$\g_{11}$}
\qbezier[20](5,9),(22,9),(40,9)            
\qbezier[20](23,-1),(23,11),(23,24)      
\put(19.9,7){$\oplus$}   
\put(21,-7){{\color{my_color}$\sigma_2$}}
\put(43,7){{\color{my_color}$\sigma_1$}}
\end{picture}  \hfil
}}
\vskip2.5ex

\noindent
Here the horizontal (resp. vertical) dotted line separates the eigenspaces of  $\sigma_1$ 
(resp. $\sigma_2$), whereas two diagonals of this matrix represent the eigenspaces
of $\sigma_3$.
Hence the first row,  first column, and the main diagonal
represent the three little $\BZ_2$-gradings  (of $\g^{\sigma_1}$,
$\g^{\sigma_2}$, and
$\g^{\sigma_3}$, respectively).

We repeatedly use the following notation for the eigenspaces of $\sigma_1$ and 
$\sigma_2$:

$\g_{0\star}:=\g_{00}\oplus\g_{01}$, \ $\g_{1\star}:=\g_{10}\oplus\g_{11}$, \quad 
$\g_{\star 0}:=\g_{00}\oplus\g_{10}$, \ $\g_{\star 1}:=\g_{01}\oplus\g_{11}$.
\vskip.6ex\noindent
Likewise, the connected subgroup of $G$ corresponding to $\g_{0\star}$ is denoted by $G_{0\star}$, etc.

Following Vinberg \cite[0.3]{vinb}, we say that a triple $\{\sigma_1, \sigma_2, \sigma_3\}
\subset\Inv(\g)$ is a {\it triad\/} if $\sigma_1\sigma_2=\sigma_3$ and all these involutions 
are conjugate. A complete classification of triads is obtained in \cite[Sect.\,3]{vinb}.
Obviously,  triads lead to the most ``symmetric'' quaternionic  decompositions.
Below, we are interested in more general (hence,  less symmetric) decompositions.
\begin{df}
We say that  $\{\sigma_1, \sigma_2\}\subset \Inv(\g)$
is a {\it dyad\/} if $\sigma_1, \sigma_2$
are conjugate and $\sigma_1\sigma_2=\sigma_2\sigma_1$. 
\end{df}

\noindent
That is, we do not require that the third involution 
$\sigma_3=\sigma_1\sigma_2$
is necessarily conjugate to $\sigma_1$. 
Note that the product of two conjugate involutions (not necessarily commuting) is always an
inner automorphism of $\g$. 
For, if $\sigma_2=\Int(g){\cdot}\sigma_1{\cdot}\Int(g^{-1})$, then
$\sigma_1\sigma_2=\Int(\sigma_1(g)g^{-1})$.
Therefore, any triad consists of
inner involutions. A member of a dyad can be an outer involution.
But the third involution, $\sigma_3$, is necessarily inner.
 
We begin with describing the dyads containing involutions of maximal rank.
 
\begin{prop}    \label{prop:ss-inner}
Let $\mu$ be a semisimple inner automorphism of\/ $\g$. Then there exist  involutions 
of maximal rank $\vartheta$ and $\vartheta'$ such that  $\mu=\vartheta\vartheta'$.
\end{prop}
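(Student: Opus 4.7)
The plan is constructive. Since $\mu$ is semisimple and inner, I write $\mu=\Int(g)$ with $g\in G$ semisimple, fix a maximal torus $T\ni g$, and set $\te=\Lie(T)$. The first step is to choose any involution $\vartheta$ of $\g$ (lifted to an involution of $G$) that acts as $-\mathrm{id}$ on $\te$ --- for instance the Chevalley involution defined in a Chevalley basis relative to $\te$. Because $\te\subset\g_1^{(\vartheta)}$ is already a Cartan subalgebra of $\g$, $\vartheta$ is automatically of maximal rank. I then set $\vartheta':=\vartheta\mu$ and try to show that $\vartheta'$ is likewise an involution of maximal rank; the identity $\mu=\vartheta\vartheta'$ is then tautological.

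The verification hinges on the simple observation that any group automorphism of a torus with differential $-\mathrm{id}$ on its Lie algebra is inversion; in particular $\vartheta|_T$ is the map $t\mapsto t^{-1}$, so $\vartheta(g)=g^{-1}$. Hence
\[
\vartheta\,\mu\,\vartheta\;=\;\Int(\vartheta(g))\;=\;\Int(g^{-1})\;=\;\mu^{-1},
\]
which gives $(\vartheta')^2=\vartheta\mu\vartheta\mu=\mu^{-1}\mu=\mathrm{id}$. For the rank of $\vartheta'$, the fact that $g\in T$ forces $\mu|_\te=\mathrm{id}$, so $\vartheta'|_\te=\vartheta|_\te=-\mathrm{id}$ and $\te\subset\g_1^{(\vartheta')}$, as required. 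Thus $\vartheta'$ is of maximal rank whenever it is a genuine (nontrivial) involution.

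The only obstacle is the degenerate case $\vartheta'=\mathrm{id}$, equivalent to $\mu=\vartheta$. This cannot arise when the chosen $\vartheta$ is outer (e.g.\ for types $A_{n\ge 2},\,D_{2n+1},\,E_6$), since $\mu$ is inner. In the remaining cases I perturb: replace $\vartheta$ by $\tilde\vartheta:=\Int(t)\circ\vartheta$ for some $t\in T$. Using $\vartheta(t)=t^{-1}$, a short computation $\tilde\vartheta^{\,2}=\Int(t\,\vartheta(t))=\Int(tt^{-1})=\mathrm{id}$ shows $\tilde\vartheta$ is still an involution, and $\Int(t)|_\te=\mathrm{id}$ keeps it of maximal rank. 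The equality $\tilde\vartheta=\mu$ pins $t$ to a single coset of $Z(G)$ in $T$, so for any other $t$ the map $\tilde\vartheta':=\tilde\vartheta\mu$ is a nontrivial involution and yields the desired decomposition $\mu=\tilde\vartheta\,\tilde\vartheta'$. The bulk of the proof is this bookkeeping; the conceptual core is the one-line fact that an involution of maximal rank inverts the ambient torus and therefore conjugates $\mu$ into $\mu^{-1}$.
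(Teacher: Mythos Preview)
Your proof is correct and rests on the same key fact as the paper's: an involution of maximal rank inverts the torus $T$ containing the element defining $\mu$, so that $\vartheta\mu\vartheta^{-1}=\mu^{-1}$. The paper packages this slightly differently: it writes $s=\vartheta(g)g^{-1}=g^{-2}$ for some $g\in T$ (using divisibility of the torus) and sets $\vartheta'=\Int(g)\,\vartheta\,\Int(g^{-1})$, so that the involution and maximal-rank properties of $\vartheta'$ are automatic by conjugacy, and then checks $\vartheta\vartheta'=\mu$. Your $\vartheta'=\vartheta\mu$ in fact coincides with the paper's, since $\Int(g)\,\vartheta\,\Int(g^{-1})=\Int(g)\Int(\vartheta(g^{-1}))\,\vartheta=\Int(g^2)\,\vartheta=\mu^{-1}\vartheta=\vartheta\mu$. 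So the two arguments are the same construction with the roles of ``definition'' and ``verification'' swapped; your version has the mild advantage of not invoking square roots in $T$.

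One simplification: the degenerate-case detour is unnecessary. You already established $\vartheta'|_{\te}=-\mathrm{id}$; since $\te\ne 0$, this forces $\vartheta'\ne\mathrm{id}$ outright, and the perturbation by $\Int(t)$ can be dropped.
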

\begin{proof}
By assumption, $\mu=\Int(s)$ for a semisimple element $s\in G$. Choose an involution of
maximal rank $\vartheta$ such that
$s$ belongs to a $\vartheta$-anisotropic maximal torus $T$. 
Clearly, the mapping $T\to T$, $t\mapsto \vartheta(t)t^{-1}=t^{-2}$ is onto.
Therefore, $s=\vartheta(g)g^{-1}$ for some $g\in T$.
Set $\vartheta'=\Int(g){\cdot}\vartheta{\cdot}\Int(g^{-1})$. It is another involution of maximal rank, and a direct computation shows that $\vartheta\vartheta'=\Int(\vartheta(g)g^{-1})=\mu$.
\end{proof}

\begin{prop}   \label{prop:ss-com-inner}
\leavevmode\par
1) \ Suppose that $\mu\in\Inv(\g)$ is  inner. 
Then there are  commuting
involutions of maximal rank, $\vartheta$ and $\vartheta'$,  such that  $\mu=\vartheta\vartheta'$.
Moreover,  $\vartheta$ and $\vartheta'$ induce an involution of maximal rank of
$\g^{\mu}$.

2) \ For commuting involutions of maximal rank, $\vartheta$ and $\vartheta'$, the following conditions are equivalent:
\begin{itemize}
\item[\sf (i)] \ the inner involution $\mu=\vartheta\vartheta'$ is quasi-maximal;
\item[\sf (ii)] \  $\vartheta'\vert_{\g^\vartheta}$ is an involution of maximal rank of 
$\g^{\vartheta}$; 
\item[\sf (iii)] \  $\vartheta\vert_{\g^{\vartheta'}}$ is an involution of maximal rank of 
$\g^{\vartheta'}$. 
\end{itemize}
\end{prop}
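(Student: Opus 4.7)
My plan splits by the two parts of the proposition.

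For Part 1, I refine the construction of Proposition~\ref{prop:ss-inner}. Write $\mu=\Int(s)$ with $s\in G$ semisimple; the identity $\mu^2=1$ forces $s^2\in Z(G)$, so $\Int(s)=\Int(s^{-1})$ on $\g$. Choose an involution of maximal rank $\vartheta$ such that $s$ lies in a $\vartheta$-anisotropic maximal torus $T$; this is possible by Remark~\ref{rmk:leva} (all maximal-rank involutions are conjugate) combined with the fact that every semisimple element lies in some maximal torus. Since $T$ is divisible and $\vartheta$ inverts $T$, pick $g\in T$ with $g^{-2}=s$ and set $\vartheta':=\Int(g)\,\vartheta\,\Int(g^{-1})$. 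Then $\vartheta\vartheta'=\Int(\vartheta(g)g^{-1})=\Int(g^{-2})=\mu$ as in Proposition~\ref{prop:ss-inner}; and $\vartheta'\vartheta=\Int(g\,\vartheta(g^{-1}))=\Int(g^{2})=\Int(s^{-1})=\mu$, the last equality because $s^2\in Z(G)$. Hence $\vartheta$ and $\vartheta'$ commute. For the ``moreover'' statement, $\te:=\Lie(T)\subset\z_\g(s)=\g^{\mu}$ is a Cartan subalgebra of the Levi $\g^\mu$, and $\vartheta|_\te=\vartheta'|_\te=-\mathrm{id}$ (the second equality by a direct computation using $g\in T$). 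Thus both induced involutions of $\g^\mu$ are of maximal rank.

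For Part 2, let $a,b,c,d$ denote $\dim\g_{00},\dim\g_{01},\dim\g_{10},\dim\g_{11}$, put $n=\rk\g$, and let $k_0,k_1$ be the numbers of even and odd exponents of $\g$. The max-rank hypotheses on $\vartheta$ and $\vartheta'$ translate (via $\dim\g_1-\dim\g_0=\rk\g$) to $c+d-a-b=n$ and $b+d-a-c=n$, which together force $b=c$ and $d-a=n$. By Proposition~\ref{k0-and-k1}(i), $\rk\g^\vartheta=\rk\g^{\vartheta'}=k_1$, so the same dimensional characterisation identifies (ii) with $b-a=k_1$ and (iii) with $c-a=k_1$; since $b=c$, these are equivalent, giving (ii) $\Leftrightarrow$ (iii). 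For (i) $\Rightarrow$ (ii), note that $\mu$ is inner ($\vartheta$ and $\vartheta'$ being conjugate) and apply Proposition~\ref{k0-and-k1}(ii): $(a+d)-(b+c)=k_0-k_1$; substituting $b=c$ and $d=a+k_0+k_1$ yields $b-a=k_1$, i.e.\ (ii).

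The main obstacle is the implication (ii) $\Rightarrow$ (i), because the dimensional identity $b-a=k_1$ is only a necessary consequence of quasi-maximality and, a priori, need not single out that conjugacy class among inner involutions. My plan is to exhibit a Cartan subalgebra of $\g$ sitting inside $\g_1^{(\mu)}=\g_{01}\oplus\g_{10}$. Fix a Cartan $\te_{01}\subset\g_{01}$ of $\g^\vartheta$ (from (ii)) and a Cartan $\te_{10}\subset\g_{10}$ of $\g^{\vartheta'}$ (from (iii)). Since $\te_{01}$ is a \CSS\ for the symmetric pair $(\g_{0\star},\g_{00})$, the characterisation~\eqref{char-prop} forces $\z_\g(\te_{01})\cap\g_{0\star}=\te_{01}$; consequently the semisimple part $\el:=[\z_\g(\te_{01}),\z_\g(\te_{01})]$ has rank $k_0$ and inherits a quaternionic decomposition with $\el_{01}=0$. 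Since $\te_{01}\subset\g_1^{(\vartheta')}$ consists of semisimple elements and $\vartheta'$ is of maximal rank, Lemma~\ref{lm:inherit} yields that $\vartheta'|_\el$ is of maximal rank on $\el$; the symmetric construction starting from $\te_{10}$, with Lemma~\ref{lm:inherit} applied to $\vartheta$, gives parallel data inside $\el':=[\z_\g(\te_{10}),\z_\g(\te_{10})]$. The delicate remaining step is to extract a Cartan of $\el$ inside $\el_{10}$, equivalently to upgrade $\vartheta|_\el$ to a max-rank involution of $\el$; I would attempt this either by rerunning the dimensional (ii) $\Leftrightarrow$ (iii) analysis inside the lower-rank algebra $\el$ (inductively on $\rk\g$, using the already-established max-rank of $\vartheta'|_\el$), or by $G_{00}$-conjugating $\te_{10}$ into $\z_\g(\te_{01})\cap\g_{10}$ using the conjugacy of \CSS\ for the appropriate little symmetric space. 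Either route yields, together with $\te_{01}$, an abelian subspace of pairwise commuting semisimple elements of $\g_1^{(\mu)}$ of dimension $k_0+k_1=n$, hence a Cartan subalgebra of $\g$ contained in $\g_1^{(\mu)}$. In particular $\g_1^{(\mu)}$ contains a regular semisimple element of $\g$, establishing (i).
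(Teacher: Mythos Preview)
Your Part 1 and the implications (i)$\Rightarrow$(ii) and (ii)$\Leftrightarrow$(iii) are correct and match the paper's argument. The problem is your plan for (ii)$\Rightarrow$(i).

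Your stated goal---to produce a Cartan subalgebra of $\g$ inside $\g_1^{(\mu)}=\g_{01}\oplus\g_{10}$---is unattainable whenever $k_0>0$. Indeed, $\g_1^{(\mu)}$ contains a Cartan of $\g$ precisely when $\mu$ is of maximal rank, whereas by Proposition~\ref{k0-and-k1}(ii) a quasi-maximal $\mu$ has $\rk(G/G^\mu)=k_1<\rk\g$ as soon as some exponent is even. So no $n$-dimensional toral subspace of $\g_1^{(\mu)}$ can exist, and your concluding sentence cannot hold. Relatedly, your claim that $\el=[\z_\g(\te_{01}),\z_\g(\te_{01})]$ has rank $k_0$ is wrong. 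You correctly observe $\z_\g(\te_{01})\cap\g_{0\star}=\te_{01}$ and hence $\el_{00}=\el_{01}=0$; but then $\el=\el_{10}\oplus\el_{11}$ with $[\el_{10},\el_{10}],[\el_{11},\el_{11}],[\el_{10},\el_{11}]$ all landing in $\el_{00}\oplus\el_{01}=0$, so $\el$ is abelian and, being semisimple, $\el=0$. Thus your ``delicate remaining step'' is vacuous and the inductive scheme never gets off the ground.

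Ironically, the computation $\el=0$ is exactly what you need: it says $\z_\g(\te_{01})$ is abelian, hence a Cartan subalgebra of $\g$, hence a generic element of $\te_{01}\subset\g_{01}\subset\g_1^{(\mu)}$ is regular semisimple in $\g$, and $\mu$ is quasi-maximal. The paper reaches the same conclusion by a shorter route: since $\g^\vartheta$ contains a regular semisimple element of $\g$ \cite[Lemma~5.3]{ri82}, and every Cartan subalgebra of $\g^\vartheta$ is $G^\vartheta$-conjugate to $\te_{01}$, already $\te_{01}\subset\g_{01}$ contains a regular semisimple element of $\g$. No analysis of $\el$, no induction, no second Cartan $\te_{10}$ is needed.
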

\begin{proof}
1) \ Suppose that $\mu=\Int(s)$. Then  $s^2\in Z(G)$ and $\Int(s)=\Int(s^{-1})$.
Choose $\vartheta$ and $\vartheta'$ as in the proof of Proposition~\ref{prop:ss-inner}.
Then $\vartheta\vartheta'=\Int(s)$ and $\vartheta'\vartheta=\Int(s^{-1})$.
Thus, $\vartheta$ and $\vartheta'$ commute.
Consider the quaternionic decomposition of $\g$ determined by $(\vartheta,\vartheta')$:

{\setlength{\unitlength}{0.02in}
\begin{picture}(70,25)(-15,7)
\put(-12,7){$\g=$}
    \put(8,15){$\g_{00}$}    \put(28,15){$\g_{01}$}
    \put(8,3){$\g_{10}$}      \put(28,3){$\g_{11}$}
\qbezier[20](5,9),(22,9),(40,9)              
\qbezier[20](23,-1),(23,11),(23,24)       
\put(20,7){$\oplus$}   
\put(43,7){{\color{my_color}$\vartheta$}}
\put(21,-8){{\color{my_color}$\vartheta'$}}
\end{picture}}  
and  the corresponding `dimension matrix'  \quad $\begin{array}{c|c}
x & y \\  \hline 
u & v  \end{array}$

\vskip2.2ex\noindent
(i.e., $x=\dim\g_{00}$, etc.). 
Let $U$ be a maximal unipotent subgroup of $G$.
Since $\vartheta$ and $\vartheta'$ are of maximal rank,
\[
x+y=x+u=\dim U,\quad  y+v=u+v=\dim U+\rk\g . 
\]
Therefore, $u=y$ and $v-x=\rk\g$.
Consequently, $\dim\g_{11}-\dim\g_{00}=\rk\g=\rk\g^\mu$. Hence  
$\g^\mu=\g_{00}\oplus\g_{11}$ is a $\BZ_2$-grading of maximal rank.

2) \  (i)$\Rightarrow$(ii),(iii).  By the assumption and Proposition~\ref{k0-and-k1}(ii), we have
$x+v-2y=k_0-k_1$.  Consequently, $x=(\dim U-k_1)/2$ and $y=(\dim U+k_1)/2$.
Hence, $y-x=k_1=\rk\g^{\vartheta}=\rk\g^{\vartheta'}$. Therefore, the induced $\BZ_2$-grading of $\g^{\vartheta}$ (or $\g^{\vartheta'}$) is of maximal rank.

Conditions (ii) and (iii) are equivalent, since the dimension matrix is symmetric.

(ii)$\Rightarrow$(i).  By assumption, $\g_{01}$ contains a Cartan subalgebra of 
$\g^{\vartheta}$. Furthermore, the fixed-point subalgebra of any involution 
(e.g. $\g^{\vartheta}$) contains a regular
semisimple element \cite[Lemma~5.3]{ri82}. Hence $\g_{01}$ contains 
a regular semisimple element of $\g$. Thus, $\g_{01}\oplus\g_{10}=
\g_1^{(\mu)}$ contains a regular semisimple element of $\g$, i.e., $\mu$ is quasi-maximal.
\end{proof}

\begin{df}
A triple $\{\vartheta,\vartheta',\mu\}\subset\Inv(\g)$ is said to be {\it canonical\/}, if
$\vartheta$ and $\vartheta'$ are involutions of maximal rank and $\vartheta\vartheta'=\mu$
is quasi-maximal. The corresponding quaternionic decomposition is also called {\it canonical}.
\end{df}

\begin{rmk}    \label{rem:nilp-g1}
By Proposition~\ref{prop:ss-com-inner}, all little involutions involved in 
the canonical decomposition are of maximal rank. Furthermore, $\g_{11}$ contains a Cartan subalgebra of
$\g$. Another interesting feature is that each of three subspaces $\g_{1\star}$, $\g_{\star 1}$,
and $\g_{10}\oplus\g_{01}$ meets all nilpotent $G$-orbits in $\g$. This follows from 
results of Antonyan \cite{leva}.
In Section~\ref{sect5}, we obtain further results on canonical
decompositions.
\end{rmk}
As we shall shortly see, every involution is a member of a dyad.
This raises the natural question: 

{\it What (conjugacy classes of) involutions can occur as products of two members of dyads 
containing given $\sigma\in\Inv(\g)$?}

For instance, Proposition~\ref{prop:ss-com-inner}(1) asserts that {\sl every} inner involution as product from a dyad of involutions of maximal rank. We give below a general 
answer in terms of the reduced root system for $\sigma$.
The following is a recipe for constructing dyads.

\begin{prop}  \label{prop:sabi68}
For any $\sigma_1\in\Inv(\g)$, there is  $\phi\in\Int (G)$ such that 
$\phi^4=\mathsf{id}$, 
$\sigma_2:=\phi\sigma_1\phi^{-1}$ commutes with $\sigma_1$, and
$\sigma_1\sigma_2=\phi^2$.  
In particular, $\{\sigma_1,\sigma_2\}$ is a dyad.
\end{prop}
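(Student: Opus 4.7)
The plan is to recast the three requirements on $\phi$ as a single anti-commutation relation and then produce $\phi$ as $\Int(g)$ for a suitably chosen $g$ in a maximal $\sigma_1$-anisotropic torus.

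First I would reformulate: the three conditions $\phi^4=\mathrm{id}$, $\sigma_2:=\phi\sigma_1\phi^{-1}$ commutes with $\sigma_1$, and $\sigma_1\sigma_2=\phi^2$ are equivalent to the single pair
\[
\phi^4=\mathrm{id}\quad\text{and}\quad \sigma_1\phi\sigma_1=\phi^{-1}.
\]
Under these two relations one has $\sigma_2^2=\phi\sigma_1^2\phi^{-1}=\mathrm{id}$, $\sigma_1\sigma_2=\sigma_1\phi\sigma_1\phi^{-1}=\phi^{-2}=\phi^2$, and $\sigma_2\sigma_1=\phi\sigma_1\phi^{-1}\sigma_1=\phi\cdot\phi=\phi^2$ (using $\sigma_1\phi^{-1}\sigma_1=\phi$). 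Writing $\phi=\Int(g)$, these two relations translate to $g^4\in Z(G)$ and $\sigma_1(g)\,g\in Z(G)$, while the dyad requirement $\sigma_1\ne\sigma_2$ amounts to $\phi^2\ne\mathrm{id}$, i.e.\ $g^2\notin Z(G)$.

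Second, I would take a maximal $\sigma_1$-anisotropic torus $S\subset G$; since $\sigma_1\ne\mathrm{id}$, its Lie algebra is a nonzero Cartan subspace of $\g_1$, so $\dim S\ge 1$. On $S$ the involution $\sigma_1$ acts by inversion, and thus $\sigma_1(g)g=e\in Z(G)$ automatically for every $g\in S$. To locate $g$ with $g^4\in Z(G)$ but $g^2\notin Z(G)$, consider the finite subgroup $A_n:=\{s\in S\mid s^n\in Z(G)\}$, which is the preimage of the finite group $S\cap Z(G)$ under the $n$-th power map of the torus $S$. Since the latter is surjective with kernel of order $n^{\dim S}$, $|A_n|=|S\cap Z(G)|\cdot n^{\dim S}$. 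Hence $|A_4|=2^{\dim S}|A_2|>|A_2|$, and one may pick $g\in A_4\setminus A_2$.

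Finally, setting $\phi:=\Int(g)$, one checks $\phi^4=\Int(g^4)=\mathrm{id}$ and $\sigma_1\phi\sigma_1=\Int(\sigma_1(g))=\Int(g^{-1})=\phi^{-1}$; by the opening reformulation this yields an involution $\sigma_2=\phi\sigma_1\phi^{-1}$ conjugate to $\sigma_1$ by an inner automorphism, commuting with $\sigma_1$, satisfying $\sigma_1\sigma_2=\phi^2$, and with $\sigma_1\ne\sigma_2$ by the choice $g^2\notin Z(G)$. The only delicate point is the inequality $|A_4|>|A_2|$: without avoiding $A_2$ one would get $\phi^2=\mathrm{id}$ and hence $\sigma_2=\sigma_1$, which explains why $\phi$ must have order exactly $4$ in $\Int(G)$ and why working in a positive-dimensional anisotropic torus is essential.
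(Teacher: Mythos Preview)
Your proof is correct and follows the same approach as the paper: pick $\phi=\Int(g)$ for an element $g$ of a $\sigma_1$-anisotropic torus with $g^4\in Z(G)$, so that $\sigma_1(g)=g^{-1}$ yields $\sigma_1\phi\sigma_1=\phi^{-1}$, and then verify the three conditions. The paper uses a one-dimensional anisotropic torus and simply asserts the existence of such $s$, leaving the remaining relations to the reader; you work in a maximal anisotropic torus and supply the extra care of ensuring $g^2\notin Z(G)$ via the counting argument $|A_4|=2^{\dim S}|A_2|>|A_2|$, which guarantees $\sigma_2\ne\sigma_1$ and hence that $\{\sigma_1,\sigma_2\}$ is genuinely a dyad---a point the paper's proof does not make explicit.
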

\begin{proof}
Let $S$ be a one-dimensional $\sigma_1$-anisotropic torus in $G$ and 
$s\in S$ be an element such that $s^4\in Z(G)$. Letting 
$\phi=\Int(s)$, one  has $\phi^4=\mathsf{id}$, $\sigma_1\phi\sigma_1=\phi^{-1}$, 
and the remaining relations are easily verified.
\end{proof}

Remarkably, all the dyads are obtained in this way!

\begin{prop}  \label{prop:converse}
Suppose that $\{\sigma_1,\sigma_2\}$ is a dyad. Then there exists 
$\phi\in\Int(G)$ such that $\phi^4=\mathsf{id}$,
$\sigma_2:=\phi\sigma_1\phi^{-1}$,  and $\sigma_1\sigma_2=\phi^2$.
\end{prop}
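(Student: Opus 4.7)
The plan is to construct $\phi$ as an inner automorphism $\Int(s)$, with $s$ chosen inside a $\sigma_1$-anisotropic torus; this reverses the recipe of Proposition~\ref{prop:sabi68}. Since $\sigma_1$ and $\sigma_2$ are conjugate, I fix $g\in G$ with $\sigma_2=\Int(g)\,\sigma_1\,\Int(g^{-1})$. As recorded in the paragraph preceding the definition of a dyad, this gives $\mu:=\sigma_1\sigma_2=\Int(h)$ where $h:=\sigma_1(g)g^{-1}$; the relation $\mu^2=\mathsf{id}$ forces $h^2\in Z(G)$, and a one-line computation yields $\sigma_1(h)=h^{-1}$. Because $Z(G)$ is finite ($G$ being semisimple), $h$ has finite order and is therefore semisimple.

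The crucial geometric step is to show that $h$ lies in some $\sigma_1$-anisotropic torus $T$ of $G$. For this I would invoke the group-level counterpart of the Cartan subspace property recalled in Section~\ref{sect1}: every semisimple element of the symmetric variety $\{x\in G:\sigma_1(x)=x^{-1}\}$ is $G^{\sigma_1}$-conjugate to an element of any fixed maximal $\sigma_1$-anisotropic torus, and any such conjugate is again a $\sigma_1$-anisotropic torus. Applying this to $h$ delivers the desired $T$. This is the only non-formal input to the argument and is the main obstacle; everything else reduces to bookkeeping.

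Once $h\in T$, surjectivity of squaring on the connected torus $T$ yields $s\in T$ with $s^2=h$. Then $\sigma_1(s)=s^{-1}$ (since $s\in T$) and $s^4=h^2\in Z(G)$, so setting $\phi:=\Int(s)$ I immediately obtain $\phi^4=\Int(s^4)=\mathsf{id}$ and $\phi^2=\Int(h)=\mu=\sigma_1\sigma_2$. For the conjugation relation I use the general identity $\Int(s)\,\tau\,\Int(s)^{-1}=\Int(s\,\tau(s)^{-1})\,\tau$ with $\tau=\sigma_1$; substituting $\sigma_1(s)=s^{-1}$ converts this into
\[
\phi\,\sigma_1\,\phi^{-1}\;=\;\Int(s^2)\,\sigma_1\;=\;\mu\,\sigma_1\;=\;\sigma_1\sigma_2\sigma_1\;=\;\sigma_2,
\]
the last equality using that $\sigma_1$ and $\sigma_2$ commute. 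The constructed $\phi$ therefore satisfies all three required conditions, completing the proof plan.
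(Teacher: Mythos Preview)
Your proposal is correct and follows essentially the same route as the paper: write $h=\sigma_1(g)g^{-1}$, place $h$ in a $\sigma_1$-anisotropic torus, extract a square root $s$ there, and set $\phi=\Int(s)$. The paper anchors the key step by citing \cite[Prop.~6.3]{ri82}, which uses precisely that $h$ is of the form $\sigma_1(g)g^{-1}$ (rather than merely that $\sigma_1(h)=h^{-1}$, which is how you phrase the hypothesis of the invoked fact); your explicit verification of the three conditions is more detailed than the paper's ``easy verification is left to the reader.''
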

\begin{proof}
Suppose that $\sigma_2=\Int(g)\sigma_1\Int(g^{-1})$ for some $g\in G$ and 
hence $\sigma_1\sigma_2=\Int(\sigma_1(g)g^{-1})$.
Set $\tilde g=\sigma_1(g)g^{-1}$.  Since $\sigma_1$ and $\sigma_2$ commute, we 
have $\tilde g^2\in Z(G)$,   $\sigma_1(\tilde g)=\tilde g^{-1}$, and 
$\sigma_2(\tilde g)=\tilde g^{-1}$. By \cite[Prop.\,6.3]{ri82}, the property that $\tilde g$ 
is semisimple and $\tilde g=\sigma_1(g)g^{-1}$ guarantees us that
$\tilde g$ is contained in a $\sigma_1$-anisotropic one-dimensional torus.
It follows that there exists $s\in G$ such that $s^2=\tilde g$ and still $\sigma_1(s)=s^{-1}$.
Then $\phi=\Int(s)$ will do. An easy verification is left to the reader.
\end{proof}

Let $C$ be a maximal $\sigma_1$-anisotropic torus (hence $\Lie(C)$ is a \CSS\ in 
$\g_{1\star}$).
Recall that a {\it restricted root\/} of $C$  is 
any non-trivial weight in the decomposition of $\g$ into the sum 
of weight spaces of $C$. Write $\Phi^{C}$ for the set of all restricted roots.
That is, 
\beq   \label{eq:restr-root}
   \g=\g^C\oplus\bigl(\bigoplus_{\gamma\in\Phi^{C}}\g_\gamma \bigr) .
\eeq
We use the additive notation for the operation in $\mathfrak X(C)$, the character 
group of $C$, and regard $\Phi^{C}$ as a subset of the vector space
$\mathfrak X(C)\otimes_\BZ\BR$.
The set 
$\Phi^{C}$ satisfies the usual axioms of finite root systems \cite{helg}. The notable difference from the structure theory of split semisimple Lie algebras is that the root system 
$\Phi^{C}$ can be non-reduced and that $m_\gamma=\dim \g_\gamma$
($\gamma\in\Phi^C$) can be greater
than $1$.

The (universal) construction of dyads described in the proof of Prop.~\ref{prop:sabi68} means that, for given $\sigma_1$, all possible involutions
$\sigma_3$ are obtained in the following way. Take a one-dimensional torus $S\subset C$,
consider the corresponding $\BZ$-grading of $\g$:
\[
  \g=\bigoplus_{i\in \BZ} \g^S(i) ,
\]  
and then  set 
$\g_0=\oplus_{i\in \BZ} \g^S(2i)$ and $\g_1=\oplus_{i\in \BZ} \g^S(2i+1)$.
Alternatively, this can be expressed as follows.
Choose a linear
form $\ell$ on $\mathfrak X(C)\otimes_\BZ\BR$ that takes integral values on $\Phi^{C}$.
(One should assume that $\ell(\Phi^C)\not\subset 2\BZ$.)
Define $\Phi^{C}_{1}$ to be the set of all restricted roots $\gamma$ such that
$\ell(\gamma)$ is odd. 
Then $\bigoplus_{\gamma\in\Phi^{C}_{1}}\g_\gamma$
is the $(-1)$-eigenspace of $\sigma_3$.

\begin{rmk} If $\Phi^C$ is reduced, then  all possible involution 
$\sigma_3$ are associated  
with  the inner involutions of the semisimple Lie algebra with root system $\Phi^C$.
\end{rmk}

\begin{ex}
If $\sigma_1$ is of maximal rank, then $C$ is a maximal torus of $G$ and 
$\Phi^{C}$ is the usual root system of $\g$.
Here all one-dimensional tori are at our disposal, hence $\sigma_3$ can be any inner 
involution (which is already known from Prop.~\ref{prop:ss-com-inner}).
\end{ex}

\begin{ex}
Let $\sigma_1$ be an involution {\sf E\,IX}, i.e., $\g$ is $\GR{E}{8}$ and
$\g^{\sigma_1}$ is $\GR{E}{7}\times\GR{A}{1}$. Here $\dim C=4$ and $\Phi^C$
is of type $\GR{F}{4}$; for $\gamma\in\Phi^C$, one has
$m_\gamma=\begin{cases}  8,  & \text{if  $\gamma$ short} \\  1,&  
\text{if  $\gamma$ long}
\end{cases}$ \ \ , see \cite[Ch.\,X, Table\,6]{helg}  or  \cite[Table\,9]{VO}. 
The Lie algebra $\GR{F}{4}$  has two (conjugacy classes of) inner involutions, and this 
leads to two possibilities for $\sigma_3$. Using information 
on the multiplicities of restricted roots, one easily computes
$\dim\g^{\sigma_3}$, which allows us to identify $\sigma_3$.
The answer is that involution {\sf F\,I} (resp.  {\sf F\,II}) leads to the involution of $\g$ with
$\dim\g^{\sigma_3}=120$  (resp. $136$), i.e., $\sigma_3$ is either {\sf E\,VIII} 
or {\sf E\,IX}.
\end{ex}

\section{Comparing Cartan subspaces}  
\label{sect3}

\noindent
As is explained above,  the quaternionic decomposition \eqref{eq:quatern_matrix} 
embraces six $\BZ_2$-gradings. 
In this section, we compare \CSS\ for  little and big  $\BZ_2$-gradings. 

For $(ij)\ne (00)$, let $\ce_{ij}$ be a \CSS\ of $\g_{ij}$; that is, 
a \CSS\ related to the little $\BZ_2$-grading 
$\g_{00}\oplus\g_{ij}$. 
There are also \CSS\ for three big involutions:

\centerline{
$\ce_{1\star}\subset \g_{1\star}$, \ 
$\ce_{\star 1}\subset \g_{\star 1}$, \ 
$\ce_{\star,1-\star}\subset \g_{\star,1-\star}:=\g_{01}\oplus\g_{10}$.
}
\vskip.8ex\noindent
Each little \CSS\ can be included in two big \CSS. E.g., because $\g_{10}\subset
\g_{1\star}$ and $\g_{10}\subset\g_{\star,1-\star}$, one can choose Cartan subspaces
$\ce_{1\star}$ and 
$\ce_{\star,1-\star}$ such that 
$\ce_{10}\subset \ce_{1\star}$ and $\ce_{10}\subset \ce_{\star,1-\star}$.
If at least one equality occurs among all such inclusions, then this will be referred to as a {\it
coincidence\/} of \CSS\ (for a given quaternionic decomposition).
We obtain two sufficient conditions for such a coincidence to happen and
provide 
examples  of quaternionic decompositions without coincidences of \CSS.
 
 We begin with a preparatory result.
 Let $\kappa$ denote the Killing form on $\g$.
 Set $\m_{ij}=[\g_{ij},\g_{ij}] \subset \g_{00}$ for $(ij)\ne (00)$.
 Clearly, $\m_{ij}$ is an (algebraic) ideal of the reductive algebraic Lie algebra $\g_{00}$.
 Therefore $\m_{ij}$  is also reductive and  $\kappa\vert_{\m_{ij}}$ is non-degenerate.
 
 \begin{lm}  \label{lm:g_11=0}
 Suppose that\/ $\g_{11}=0$. Then 
 \begin{itemize}
\item[\sf (i)] \ $[\m_{01},\g_{10}]=[\m_{10},\g_{01}]=0$; 
\item[\sf (ii)] \ $\kappa(\m_{01},\m_{10})=0$ and \ 
$\m_{01}\cap\m_{10}=\{0\}$; 
\item[\sf (iii)]  \ $\m_{10}\oplus\g_{10}$ and\/ $\m_{01}\oplus\g_{01}$  are disjoint
ideals of $\g$.
\end{itemize}
\end{lm}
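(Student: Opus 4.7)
The plan is to prove the three assertions in order, since (i) is the foundation that feeds both (ii) and (iii).

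For part (i), the key input is that $[\g_{01},\g_{10}]\subset\g_{11}=0$. Writing a typical generator of $\m_{01}$ as $[x,y]$ with $x,y\in\g_{01}$ and applying the Jacobi identity against an arbitrary $z\in\g_{10}$,
\[
[[x,y],z]=[x,[y,z]]-[y,[x,z]]=0,
\]
since both inner brackets land in $\g_{11}=0$. The symmetric argument handles $[\m_{10},\g_{01}]=0$.

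For part (ii), I would avoid the temptation to compute $\kappa(u,v)=\mathrm{tr}(\ad u \ad v)$ piece by piece on the $\BZ_2\times\BZ_2$-graded summands, because although $\ad v$ kills $\g_{01}$ and $\ad u\ad v$ kills $\g_{10}$ by part (i), the $\g_{00}$-diagonal contribution is not manifestly zero. Instead, invariance of $\kappa$ settles everything at once: for $u\in\m_{10}$ and $v=\sum[a_i,b_i]\in\m_{01}$ with $a_i,b_i\in\g_{01}$,
\[
\kappa(u,v)=\sum_i\kappa(u,[a_i,b_i])=\sum_i\kappa([u,a_i],b_i)=0
\]
by part (i). Applying the same calculation to any $x\in\m_{01}\cap\m_{10}$ yields $\kappa(x,\m_{01})=0$; since $\kappa|_{\m_{01}}$ is non-degenerate (as noted just before the lemma), this forces $x=0$.

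For part (iii), I would verify ideal closure of $\m_{10}\oplus\g_{10}$ piece by piece using the grading: $[\g_{00},\m_{10}\oplus\g_{10}]\subset\m_{10}\oplus\g_{10}$ because $\m_{10}$ is an ideal of the reductive algebra $\g_{00}$; $[\g_{01},\m_{10}]=0$ by part (i); $[\g_{01},\g_{10}]\subset\g_{11}=0$; and $[\g_{10},\m_{10}\oplus\g_{10}]\subset\g_{10}\oplus\m_{10}$ is immediate from $\m_{10}\subset\g_{00}$ and the definition of $\m_{10}$. The argument for $\m_{01}\oplus\g_{01}$ is identical. For disjointness, observe that $(\m_{10}\oplus\g_{10})\cap(\m_{01}\oplus\g_{01})\subset(\g_{00}\oplus\g_{10})\cap(\g_{00}\oplus\g_{01})=\g_{00}$, so the intersection equals $\m_{10}\cap\m_{01}$, which vanishes by part (ii).

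The main obstacle is the orthogonality calculation in (ii); everything else is bookkeeping in the $\BZ_2\times\BZ_2$-grading. The trick is to recognise that one should not trace-compute but rather use invariance to convert $\kappa(\m_{10},\m_{01})$ into a Killing pairing that explicitly involves a bracket $[\m_{10},\g_{01}]$, which vanishes by part (i).
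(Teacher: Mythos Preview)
Your proof is correct and is exactly the detailed expansion of the paper's one-line argument, which only says that everything follows from the Jacobi identity, the relation $[\g_{10},\g_{01}]=0$, and the non-degeneracy of $\kappa\vert_{\m_{ij}}$. Your use of invariance of $\kappa$ in part (ii) is the right move and is implicitly what the paper intends.
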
\begin{proof}
Everything follows from the Jacobi identity, the relation $[\g_{10},\g_{01}]=0$,
and the fact that $\kappa\vert_{\m_{ij}}$ is non-degenerate.
\end{proof}

\begin{cor}  \label{cor:no-proper}
If\/ $\g_{0\star}=\g^{\sigma_1}$ does not contain  proper ideals of\/ $\g$ and $\g_{11}=0$, then
$\g_{01}=0$ as well. \\
(In particular, this applies  if $\sigma_1$ is of maximal rank.)
\end{cor}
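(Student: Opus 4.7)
The plan is to feed $\g_{11}=0$ directly into Lemma~\ref{lm:g_11=0}(iii). That lemma already tells us $\m_{01}\oplus\g_{01}$ is an ideal of $\g$, so the only work is to check it lives inside $\g_{0\star}$ and then invoke the hypothesis. But $\g_{01}\subset\g_{0\star}$ is immediate, and $\m_{01}=[\g_{01},\g_{01}]\subset\g_{00}\subset\g_{0\star}$ as well. Hence $\m_{01}\oplus\g_{01}$ is an ideal of $\g$ contained in $\g^{\sigma_1}$. Since $\sigma_1$ is a nontrivial involution we have $\g^{\sigma_1}\ne\g$, so this ideal is proper; the hypothesis that $\g^{\sigma_1}$ contains no proper ideal of $\g$ forces $\m_{01}\oplus\g_{01}=0$, in particular $\g_{01}=0$.

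For the parenthetical clause I would verify that whenever $\sigma_1$ has maximal rank, no nonzero ideal of $\g$ can sit inside $\g_{0\star}$. Suppose $\mathfrak I\subset\g_{0\star}$ is a nonzero ideal of the semisimple algebra $\g$, and let $\mathfrak I^{c}$ be its complementary ideal; both are $\sigma_1$-stable, and $\sigma_1|_{\mathfrak I}=\mathrm{id}$ since $\mathfrak I\subset\g_{0\star}$. Choose a Cartan subalgebra $\h\subset\g_{1\star}$, which exists by the maximal-rank hypothesis. Because $\mathfrak I$ and $\mathfrak I^{c}$ are orthogonal ideals, $\h=(\h\cap\mathfrak I)\oplus(\h\cap\mathfrak I^{c})$, and $\h\cap\mathfrak I$ is a Cartan subalgebra of $\mathfrak I$. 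However, $\h\cap\mathfrak I\subset\g_{1\star}\cap\g_{0\star}=0$, so $\mathfrak I$ has rank zero and must vanish — a contradiction.

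The argument is essentially a one-line deduction from Lemma~\ref{lm:g_11=0}(iii); the only step requiring a little thought is the maximal-rank verification, but even there the standard splitting of a Cartan subalgebra along a decomposition into ideals handles it at once. I do not anticipate any real obstacle.
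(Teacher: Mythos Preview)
Your argument is correct and matches the paper's proof: invoke Lemma~\ref{lm:g_11=0}(iii), note that $\m_{01}\oplus\g_{01}\subset\g_{0\star}\subsetneq\g$, and conclude it vanishes. The paper leaves the parenthetical maximal-rank clause unproved, whereas you supply a clean verification via the splitting of a Cartan subalgebra along complementary ideals.
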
\begin{proof}
Since $\sigma_1$ is non-trivial, we have $\g_{10}\ne 0$. As 
$\m_{01}\oplus\g_{01}$ is 
a proper ideal of $\g$ lying in $\g_{0\star}$, it must be zero, i.e., $\g_{01}=0$. 
\end{proof}

Now, we are in a position to prove our first result on the coincidence of \CSS.

\begin{thm}               \label{thm:sovpad1}
Let $\g$ be a semisimple Lie algebra and  $\{\sigma_1,\sigma_2,\sigma_3\}$  a triple of involutions of $\g$ such that $\sigma_1\sigma_2=\sigma_3$.
Suppose that $\sigma_1$ is of maximal rank. Then 
(1) any \CSS\ \ $\ce_{11}\subset \g_{11}$
is also a \CSS\ \ in $\g_{\star 1}$, i.e., for $\sigma_2$; 
(2) any \CSS\ \ $\ce_{10}\subset \g_{10}$
is also a \CSS\ \ in $\g_{10}\oplus\g_{01}$, i.e., for $\sigma_3$.
\end{thm}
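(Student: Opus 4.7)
The plan is to apply the characterization \eqref{char-prop} of Cartan subspaces, which reduces both statements to the vanishing of a single graded piece of a reductive centralizer. Set $\h := \z_\g(\ce_{11})$ in part (1) and $\h := \z_\g(\ce_{10})$ in part (2). In either case $\h$ is a $\sigma_1,\sigma_2$-stable reductive subalgebra (centralizer of a toral subspace of $\g$), so it inherits the $\BZ_2\times\BZ_2$-grading $\h = \bigoplus_{i,j}\h_{ij}$. Applying \eqref{char-prop} to the given little Cartan subspace itself yields $\h\cap\g_{11}=\ce_{11}$ in part (1) (resp.\ $\h\cap\g_{10}=\ce_{10}$ in part (2)). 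Consequently the desired equality $\z_\g(\ce_{11})\cap\g_{\star1}=\ce_{11}$ (resp.\ $\z_\g(\ce_{10})\cap(\g_{01}\oplus\g_{10})=\ce_{10}$) reduces in both cases to the single assertion $\h_{01}=0$.

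Split $\h = Z(\h)\oplus\h'$ with $\h'=[\h,\h]$ semisimple, so that $\h_{01} = Z(\h)_{01}\oplus\h'_{01}$, and handle each summand in turn. Picking a generic $x$ in the little Cartan subspace with $\z_\g(x)=\h$ and noting that $x\in\g_{1\star}$, Lemma~\ref{lm:inherit} gives that both $\sigma_1|_\h$ and $\sigma_1|_{\h'}$ remain of maximal rank. For the centre: maximal rank of $\sigma_1|_\h$ supplies a Cartan subalgebra of $\h$ sitting inside $\h_{1\star}$, and any Cartan subalgebra of a reductive algebra automatically contains $Z(\h)$, so $Z(\h)\subset\h_{1\star}$ and in particular $Z(\h)_{01}=0$. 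For the semisimple piece, the little Cartan subspace is centralized by all of $\h$ by the very definition of $\h$, so it lies in $Z(\h)$ and is disjoint from $\h'$; in part (1) this gives $\h'_{11}=0$, and in part (2) it gives $\h'_{10}=0$.

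In part (1), Corollary~\ref{cor:no-proper} applied to the semisimple algebra $\h'$ with commuting involutions $\sigma_1|_{\h'},\sigma_2|_{\h'}$ (with $\sigma_1$ of maximal rank) directly forces $\h'_{01}=0$, finishing the argument. In part (2) one must relabel: consider $\h'$ with the commuting pair $(\sigma_1|_{\h'},\sigma_3|_{\h'})$, where $\sigma_3=\sigma_1\sigma_2$. A direct eigenvalue check shows that under this relabeling the new $(1,1)$-position is exactly the old $(1,0)$-position (which is zero), the $(0,1)$-position is unchanged, and $\sigma_1$ still has maximal rank; Corollary~\ref{cor:no-proper} applied to this data again forces the old $\h'_{01}$ to vanish. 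I expect the only genuine obstacle to be precisely this relabelling trick together with the brief verification that a Cartan subalgebra of $\h$ inside $\h_{1\star}$ absorbs the centre: both steps are short, but they must be stated with care, since Corollary~\ref{cor:no-proper} is asymmetric between its two commuting involutions and the role of ``killing $\g_{11}$'' in part (2) is taken over by ``killing $\g_{10}$'' only after the change of labels.
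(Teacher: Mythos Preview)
Your proof is correct and follows essentially the same route as the paper: reduce via \eqref{char-prop} to showing $\h_{01}=0$ for $\h=\z_\g(\ce_{11})$ (resp.\ $\ce_{10}$), handle the centre by placing it inside a Cartan subalgebra lying in $\h_{1\star}$, and kill $\h'_{01}$ via Lemma~\ref{lm:inherit} plus Corollary~\ref{cor:no-proper}. The only cosmetic difference is that the paper disposes of part~(2) up front by permuting $\sigma_2$ and $\sigma_3$ and then proving only part~(1), whereas you carry both cases in parallel and perform that same relabelling $(\sigma_1,\sigma_2)\leadsto(\sigma_1,\sigma_3)$ at the moment of invoking Corollary~\ref{cor:no-proper}.
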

\begin{proof}
Obviously, (2) is obtained from (1) if we permute $\sigma_2$ and $\sigma_3$.
Therefore, it suffices to prove the first assertion.

Set $\el=\z_\g(\ce_{11})$ and $\es:=[\el,\el]$.  Then $\el$ is a $(\sigma_1,\sigma_2)$-stable
Levi subalgebra of $\g$. Let $\z$ be the centre of $\el$, so that $\el=\es\oplus\z$.
By construction,  $\ce_{11}\subset \z_{11}$. Furthermore, since $\ce_{11}$ is a \CSS\ of 
$\g_{11}$, we have $\z_\g(\ce_{11})\cap \g_{11}=\ce_{11}$, i.e., 
$\es_{11}=0$ and $\ce_{11}= \z_{11}$.

As $\sigma_1$ is of maximal rank, there exists a Cartan subalgebra $\te\subset\g$ such that
$\ce_{11}\subset \te \subset \g_{1\star}$. Then $[\z,\te]=0$ and
therefore $\z\subset\te\subset\g_{1\star}$.
By Lemma~\ref{lm:inherit}, the restriction of $\sigma_1$ to the semisimple algebra $\es$ 
is still of maximal rank. Applying Corollary~\ref{cor:no-proper} to $\es$ in place of $\g$, we 
conclude that $\es_{01}=0$. Because $\z\subset \g_{1\star}$, this also means that
$\el_{01}=0$. Thus,
\[
   \z_\g(\ce_{11})\cap \g_{\star 1} =\z_\g(\ce_{11})\cap \g_{11}=\ce_{11} ,
\]
which is exactly what we need, in view of~\eqref{char-prop}.
\end{proof}

Alternatively, coincidences of \CSS\ in Theorem~\ref{thm:sovpad1} can be expressed
in terms of the rank of symmetric spaces. 
\begin{cor}
If $\vartheta$ and $\sigma$ are commuting involutions and
$\vartheta$ is of maximal rank, then
\begin{center}
   $\rk(G/G^{\sigma})=\rk(G^{\vartheta\sigma}/G^{\vartheta}\cap G^{\sigma})$ \ and \ 
   $\rk(G/G^{\vartheta\sigma})=\rk(G^{\sigma}/G^{\vartheta}\cap G^{\vartheta\sigma})$ .
\end{center}
\end{cor}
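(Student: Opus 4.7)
The plan is to rewrite the corollary as an equality between dimensions of Cartan subspaces in the quaternionic decomposition associated with $(\vartheta,\sigma)$, and then invoke Theorem~\ref{thm:sovpad1}.

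First, set $\sigma_1:=\vartheta$ and $\sigma_2:=\sigma$, so that $\sigma_3=\vartheta\sigma$, and form the $\BZ_2\times\BZ_2$--grading $\g=\bigoplus_{i,j}\g_{ij}$ as in~\eqref{eq:quatern_matrix}. Then
$\g^\sigma=\g_{\star 0}$, \ $\g^{\vartheta\sigma}=\g_{00}\oplus\g_{11}$, and the three pairwise intersections $\g^\vartheta\cap\g^\sigma$, $\g^\vartheta\cap\g^{\vartheta\sigma}$ and $\g^\sigma\cap\g^{\vartheta\sigma}$ all coincide with $\g_{00}$. On the passage from groups to Lie algebras, I would just note that the rank of a symmetric space depends only on the pair of Lie algebras.

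For the first equality, the $(-1)$--eigenspace of $\sigma$ on $\g$ is $\g_{\star 1}$, so $\rk(G/G^\sigma)=\dim\ce_{\star 1}$ for any \CSS\ $\ce_{\star 1}\subset\g_{\star 1}$. On the other hand, inside $\g^{\vartheta\sigma}=\g_{00}\oplus\g_{11}$ the restriction of $\sigma$ (equivalently $\vartheta$, since $\vartheta=\sigma\cdot(\vartheta\sigma)$) is an involution with fixed points $\g_{00}$ and $(-1)$--eigenspace $\g_{11}$; hence $\rk\bigl(G^{\vartheta\sigma}/G^\vartheta\cap G^\sigma\bigr)=\dim\ce_{11}$. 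By Theorem~\ref{thm:sovpad1}(1), a \CSS\ of $\g_{11}$ is already a \CSS\ of $\g_{\star 1}$, so $\dim\ce_{11}=\dim\ce_{\star 1}$, which gives the first identity.

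For the second equality, proceed symmetrically. The $(-1)$--eigenspace of $\vartheta\sigma$ on $\g$ is $\g_{01}\oplus\g_{10}$, so $\rk(G/G^{\vartheta\sigma})=\dim\ce_{\star,1-\star}$. Inside $\g^\sigma=\g_{\star 0}$, the restriction of $\vartheta$ has fixed points $\g_{00}$ and $(-1)$--eigenspace $\g_{10}$, so $\rk\bigl(G^\sigma/G^\vartheta\cap G^{\vartheta\sigma}\bigr)=\dim\ce_{10}$. Theorem~\ref{thm:sovpad1}(2) states exactly that a \CSS\ of $\g_{10}$ is also a \CSS\ of $\g_{10}\oplus\g_{01}$, yielding the desired equality.

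The only step that requires any care is the bookkeeping in identifying which eigenspace of which involution corresponds to each summand $\g_{ij}$; once this dictionary is set up, nothing beyond Theorem~\ref{thm:sovpad1} is needed, so no real obstacle is expected.
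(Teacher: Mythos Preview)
Your proof is correct and is exactly the intended argument: the paper presents this corollary as an immediate restatement of Theorem~\ref{thm:sovpad1} in terms of ranks of symmetric spaces, without writing out the dictionary explicitly. Your identification of the various eigenspaces with the summands $\g_{ij}$ is accurate, and invoking parts (1) and (2) of Theorem~\ref{thm:sovpad1} is all that is required.
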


\noindent
The following readily follows from the symmetry between $\sigma_1$ and $\sigma_2$:

\begin{cor} \label{cor:sovpad1}
Suppose that both $\sigma_1$ and $\sigma_2$ are of maximal rank. Then 
\begin{itemize}
\item[\sf (i)] \ any \CSS\ \ $\ce_{11}\subset \g_{11}$
is also a \CSS\ \ in $\g_{1\star}$ or in $\g_{\star 1}$, i.e., for $\sigma_1$ or $\sigma_2$;
\item[\sf (ii)] \ any \CSS\ \ $\ce_{01}\subset \g_{01}$
is also a \CSS\ \ in $\g_{10}\oplus\g_{01}$, i.e., for $\sigma_3$;
\item[\sf (iii)] \ any \CSS\ \ $\ce_{10}\subset \g_{10}$
is also a \CSS\ \ in $\g_{10}\oplus\g_{01}$, i.e., for $\sigma_3$.
\end{itemize}
\end{cor}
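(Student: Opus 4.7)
The plan is to derive this corollary directly from Theorem~\ref{thm:sovpad1} by exploiting the fact that, under the present hypothesis, both $\sigma_1$ and $\sigma_2$ are of maximal rank, so Theorem~\ref{thm:sovpad1} is applicable in either of two symmetric ways. First I would record the (obvious) effect of swapping $\sigma_1$ and $\sigma_2$ on the matrix~\eqref{eq:quatern_matrix}: this transposition fixes $\g_{00}$ and $\g_{11}$, interchanges $\g_{01}$ with $\g_{10}$ (and hence $\g_{\star 1}$ with $\g_{1\star}$), and leaves $\sigma_3=\sigma_1\sigma_2=\sigma_2\sigma_1$ unchanged.

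Part~(iii) is immediate: Theorem~\ref{thm:sovpad1}(2) applied to $\sigma_1$ says that any \CSS\ $\ce_{10}\subset\g_{10}$ is already a \CSS\ in $\g_{10}\oplus\g_{01}$, which is the $(-1)$-eigenspace of $\sigma_3$. For part~(ii), I would rerun the same argument after the swap: Theorem~\ref{thm:sovpad1}(2) applied with $\sigma_2$ in the role of the maximal-rank involution gives that any \CSS\ $\ce_{01}\subset\g_{01}$ is a \CSS\ in $\g_{01}\oplus\g_{10}$, still the $(-1)$-eigenspace of $\sigma_3$.

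For part~(i), both halves are instances of Theorem~\ref{thm:sovpad1}(1). The direct application (with $\sigma_1$ of maximal rank) gives that $\ce_{11}$ is a \CSS\ in $\g_{\star 1}$, while the swapped application (with $\sigma_2$ of maximal rank) yields that the same $\ce_{11}$ is a \CSS\ in $\g_{1\star}$; crucially, $\g_{11}$ itself is fixed by the swap, so a single little \CSS\ of $\g_{11}$ serves both conclusions.

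I expect no substantive obstacle: once the $\sigma_1\leftrightarrow\sigma_2$ symmetry of the hypothesis is recorded, the corollary is a pure bookkeeping consequence of Theorem~\ref{thm:sovpad1}, with all three parts obtained by at most one invocation of the theorem per statement.
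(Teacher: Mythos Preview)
Your proposal is correct and follows exactly the approach the paper intends: the paper's entire proof is the one-line remark that the corollary ``readily follows from the symmetry between $\sigma_1$ and $\sigma_2$,'' and you have simply unpacked that symmetry explicitly, applying Theorem~\ref{thm:sovpad1} once directly and once with the roles of $\sigma_1$ and $\sigma_2$ swapped.
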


\begin{rmk}
If both $\sigma_1$ and $\sigma_2$ are of maximal rank, then 
$\ce_{11}$ is a Cartan subalgebra of $\g$ (use Prop.~\ref{prop:ss-com-inner}(1)).
This  provides another explanation for part (i) in Corollary~\ref{cor:sovpad1}.
\end{rmk}

Our second result on the coincidence of \CSS\ concerns arbitrary dyads.

\begin{thm}    \label{thm:sovpad2}
Let $\g$ be a semisimple Lie algebra and  $\{\sigma_1,\sigma_2,\sigma_3\}$  a triple of involutions of $\g$ such that $\sigma_1\sigma_2=\sigma_3$.
Suppose that $\{\sigma_1,\sigma_2\}$ is a dyad. 
Then any \CSS\ \ $\ce_{11}\subset \g_{11}$
is also a \CSS\ \ in $\g_{1\star}$ or $\g_{\star 1}$, i.e., for $\sigma_1$ or $\sigma_2$;
\end{thm}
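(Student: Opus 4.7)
The plan is to mimic the proof of Theorem~\ref{thm:sovpad1}, replacing the maximal-rank hypothesis by the explicit dyad construction provided by Proposition~\ref{prop:converse}. First I would put $\el=\z_\g(\ce_{11})$, $\es=[\el,\el]$, and let $\z$ denote the centre of $\el$. Exactly as in the opening of the proof of Theorem~\ref{thm:sovpad1}, the \CSS\ property of $\ce_{11}\subset\g_{11}$ forces $\es_{11}=0$ and $\ce_{11}=\z_{11}$; in particular, $\ce_{11}$ lies in the centre of $\el$.

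Next I invoke Proposition~\ref{prop:converse} to write $\sigma_1\sigma_2=\Int(t)$ with $t=s^2$, for some $s\in G$ satisfying $s^4\in Z(G)$ and $\sigma_1(s)=s^{-1}$. The identity $t=s^2=\sigma_1(s^{-1})\cdot(s^{-1})^{-1}$ realises $t$ in exactly the form required by Proposition~6.3 of \cite{ri82}, which already underlies the proof of Proposition~\ref{prop:converse}; that result therefore places the semisimple element $t$ in a $\sigma_1$-anisotropic one-dimensional torus $T_0\subset G$. For any $t'\in T_0$ one has $\sigma_2(t')=\Int(t)(\sigma_1(t'))=t(t')^{-1}t^{-1}=(t')^{-1}$, the last equality because $t\in T_0$ commutes with $t'$; so $T_0$ is also $\sigma_2$-anisotropic and $\Lie(T_0)\subset\g_{11}$. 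Enlarging $T_0$ to a maximal $(\sigma_1,\sigma_2)$-anisotropic subtorus $C\subset G$ makes $\Lie(C)$ a \CSS\ of $\g_{11}$. Since all \CSS\ of $\g_{11}$ are $G_{00}$-conjugate and $G_{00}\subset G^{\sigma_1}\cap G^{\sigma_2}$ (so that $G_{00}$-conjugation leaves the dyad $\{\sigma_1,\sigma_2\}$ intact), I may replace $s$ by a $G_{00}$-conjugate and assume $\Lie(C)=\ce_{11}$; in particular $t\in C$.

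From this point the argument closes immediately. Because $\Lie(C)=\ce_{11}\subset\z$ centralises $\el$, the connected torus $C$ acts trivially on $\el$ via the adjoint representation, so $\Ad(t)|_{\el}=\mathsf{id}$. But $\Int(t)=\Int(s^2)=\sigma_1\sigma_2=\sigma_3$, hence $\sigma_3|_{\el}=\mathsf{id}$, i.e.\ $\el\subset\g^{\sigma_3}=\g_{00}\oplus\g_{11}$, and therefore $\el_{01}=\el_{10}=0$. This yields $\z_\g(\ce_{11})\cap\g_{1\star}=\el_{11}=\ce_{11}$ and, symmetrically, $\z_\g(\ce_{11})\cap\g_{\star 1}=\ce_{11}$, so by characterisation~\eqref{char-prop} the subspace $\ce_{11}$ is a \CSS\ in both $\g_{1\star}$ and $\g_{\star 1}$---a conclusion slightly stronger than the stated ``or.''

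The step most likely to demand care is placing $t=s^2$ inside a torus whose Lie algebra equals the prescribed $\ce_{11}$: Proposition~6.3 of \cite{ri82} supplies only a one-dimensional $\sigma_1$-anisotropic torus, and one must then verify its automatic $\sigma_2$-anisotropy, enlarge to a maximal $(\sigma_1,\sigma_2)$-anisotropic torus, and perform a $G_{00}$-conjugation to align it with $\ce_{11}$, using the $G_{00}$-conjugacy of \CSS\ of $\g_{11}$ and the invariance of the dyad under $G_{00}$-conjugation. Once $t\in C$ is arranged, the identity $\Ad(t)|_{\el}=\mathsf{id}$ is free from $\ce_{11}\subset\z$, and the conclusion $\sigma_3|_{\el}=\mathsf{id}$ is immediate.
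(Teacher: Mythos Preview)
Your argument is correct and in fact more economical than the paper's own proof. The paper splits the work into two pieces: first it forces the conjugating element $g$ (with $\sigma_2=\Int(g)\sigma_1\Int(g^{-1})$) into the centraliser $L=Z_G(x)$ of a generic $x\in\ce_{11}$ by invoking Bulois' result that $G{\cdot}x\cap\g_1=G_0{\cdot}x$ for semisimple $x\in\g_1$, and then uses Lemma~\ref{lm:g_11=0}(iii) on disjoint ideals to conclude $\es_{01}=\es_{10}=0$; second, it treats the centre $\z$ separately by embedding it in a Cartan subalgebra of $\g^{\sigma_3}$. Your route bypasses both manoeuvres by realising $\sigma_3=\Ad(t)$ with $t$ literally sitting inside the connected torus $C$ whose Lie algebra is $\ce_{11}$, so that $\Ad(t)\vert_\el=\mathsf{id}$ is immediate from $\ce_{11}\subset\z(\el)$. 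The price is the bookkeeping around Proposition~\ref{prop:converse}, Richardson's Prop.~6.3, and the $G_{00}$-conjugacy of \CSS, all of which you handle correctly (in particular, conjugating $s$ by $h\in G_{00}$ indeed preserves both $\sigma_1$ and $\sigma_2$, hence $\sigma_3=\Int(hth^{-1})$ still holds); the payoff is that you avoid Bulois' theorem and the disjoint-ideals lemma entirely, and you obtain in one stroke the symmetric conclusion that $\ce_{11}$ is a \CSS\ in both $\g_{1\star}$ and $\g_{\star 1}$---which the paper's proof also yields, though the theorem is stated with ``or''. One small simplification: since the proof of Proposition~\ref{prop:converse} already extracts $s$ as a square root of $\tilde g$ \emph{inside} a $\sigma_1$-anisotropic one-dimensional torus, you need not re-invoke \cite[Prop.~6.3]{ri82} separately for $t=s^2$.
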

\begin{proof}
As in the proof of Theorem~\ref{thm:sovpad1}, we consider the $(\sigma_1,\sigma_2)$-stable
Levi subalgebra
\[
         \z_\g(\ce_{11})=:\el=\es\oplus\z ,
\]
where $\es$ is semisimple and $\z$ is the centre of $\el$.
Since $\ce_{11}$ is a \CSS\ in $\g_{11}$, we have
$\el_{11}=\ce_{11}(=\z_{11})$, so that $\es_{11}=\{0\}$, 
and our task is to prove that $\el_{01}=\el_{10}=\{0\}$.

Since $\sigma_1$ and $\sigma_2$ are conjugate, 
$\sigma_2=\Int(g)\sigma_1\Int(g^{-1})$ for some $g\in G$.
Then $\sigma_3=\Int(\tilde g)$, where $\tilde g=\sigma_1(g)g^{-1}$.
Since  $\sigma_1\sigma_2=\sigma_2\sigma_1$ and $\Int(\tilde g) =\Int(\tilde g^{-1})$, 
we also have $\tilde g=g^{-1}\sigma_2(g)$.
The choice of $g$ is not unique, and we are going to demonstrate that
$g$ can be chosen to have some extra properties.

Let $x\in\ce_{11}$ be  generic. In particular,  $x\in\g^{\sigma_3}$, i.e., 
$\Int(\tilde g){\cdot}x=x$.
Making use of two expressions for $\tilde g$, one easily computes that
  
$\quad\sigma_1(\Int(g^{-1}){\cdot} x) =-\Int(g^{-1}){\cdot} x$ and 
$\quad\sigma_2(\Int(g){\cdot} x) =-\Int(g){\cdot}x$. 

\noindent
In particular, 
\beq   \label{eq:dva-srazu}
\mbox{
$\Int(g^{-1}){\cdot}x  \subset \g_{1\star}=\g_1^{(\sigma_1)}$ \  and \ 
$\Int(g){\cdot}x  \subset \g_{\star 1}=\g_1^{(\sigma_2)}$ .}
\eeq
Recently, M.~Bulois \cite[Prop.\,6.6]{bulois}
noticed that any $\BZ_2$-grading $\g=\g_0\oplus\g_1$
has the following  property:

\vskip1ex
\centerline{\it If $x\in\g_1$ is semisimple, then
$G{\cdot}x\cap \g_1=G_0{\cdot}x$.}

\noindent
Applying this, say,  to the first inclusion in \eqref{eq:dva-srazu} shows that 
the identity component of $G^{\sigma_1}$ contains
$p$  such that 
$\Int(g^{-1}){\cdot}x=\Int(p){\cdot}x$, i.e., $gp\in Z_G(x)$, the centraliser of $x$ in $G$.

Replacing $g$ with $gp$ does not affect $\tilde g$ and $\sigma_2$. Therefore, we may
assume that our initial $g$ lies in $Z_G(x)$. Note that this group is always connected.
So far, we did not use the assumption that $x$ is generic in $\ce_{11}$.
For generic $x$, we have $\z_\g(x)=\z_\g(\ce_{11})$. Hence 
$\Lie(Z_G(x))=\el$, and we will write $L$ in place of $Z_G(x)$.
Since $g\in L$, the involutions $\sigma_1\vert_\es$ and
$\sigma_2\vert_\es$ are conjugate with respect to $\Int(S)$, where $S:=(L,L)$ is 
connected and semisimple. Note that $\Lie(S)=\es$.

Let us prove that $\es_{01}=\es_{10}=\{0\}$.  Since $\es_{11}=\{0\}$, it follows
from Lemma~\ref{lm:g_11=0}(iii) that 
$\q_{10}=\es_{10}\oplus [\es_{10},\es_{10}]$ and 
$\q_{01}=\es_{01}\oplus [\es_{01},\es_{01}]$  are disjoint ideals of $\es$.
Thus, $\es^{\sigma_1}\supset \q_{01}$,  $\es^{\sigma_2}\supset \q_{10}$,
and $\es^{\sigma_1}$ and $\es^{\sigma_1}$ are conjugate with respect to $S$.
It is only possible if $\q_{10}=\q_{01}=\{0\}$, i.e., $\es_{10}=\es_{01}=\{0\}$.
This completes the first part of our programme. 

The second part deals with the centre of $\el$. 
Let $\te$ be a Cartan subalgebra of $\g^{\sigma_3}=\g_{00}\oplus\g_{11}$ that 
contains $\ce_{11}$.
Since $\sigma_3$ is inner, $\te$ is actually a Cartan subalgebra of $\g$.
As $\te\subset \z_\g(x)$, $\te$ must contain $\z$. Therefore, $\z\subset \g^{\sigma_3}$
and $\z_{01}=\z_{10}=\{0\}$.

We have proved that $\el_{01}=\el_{10}=\{0\}$. Hence
$
\z_\g(\ce_{11})\cap\g_{1\star}=\z_\g(\ce_{11})\cap\g_{\star 1}=\ce_{11}
$,
and we are done.
\end{proof}

\begin{cor}  \label{cor:triada}
If $\{\sigma_1,\sigma_2,\sigma_3\}$ is a triad, then
every little \CSS\ is also a big \CSS\ in two possible ways. (That is, six coincidences of\/ 
\CSS\ occur.)
\end{cor}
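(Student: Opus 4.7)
The plan is to apply Theorem~\ref{thm:sovpad2} three times, once for each of the three dyads $\{\sigma_1,\sigma_2\}$, $\{\sigma_1,\sigma_3\}$, $\{\sigma_2,\sigma_3\}$ sitting inside the triad. Since all three involutions are pairwise conjugate and pairwise commuting, every two of them do form a dyad in the sense of the definition.

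For the reference pair $\{\sigma_1,\sigma_2\}$, Theorem~\ref{thm:sovpad2} directly gives that $\ce_{11}$ is a \CSS\ both in $\g_{1\star}$ and in $\g_{\star 1}$. The remaining four coincidences are obtained by relabeling. Using the pair $(\sigma_1,\sigma_3)$ to set up a quaternionic decomposition, and noting that $\sigma_2=\sigma_1\sigma_3$, the component on which both $\sigma_1$ and $\sigma_3$ act by $-1$ (and hence $\sigma_2$ acts by $+1$) is precisely our $\g_{10}$. Applying Theorem~\ref{thm:sovpad2} to the relabeled picture yields that $\ce_{10}$ is a \CSS\ in both $\g_{1\star}$ and $\g_{01}\oplus\g_{10}$, i.e.\ in the $(-1)$-eigenspaces of $\sigma_1$ and $\sigma_3$. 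Symmetrically, using the pair $(\sigma_2,\sigma_3)$ with $\sigma_1=\sigma_2\sigma_3$ makes $\g_{01}$ the ``$(11)$''-component of the new grading, and a third application of the theorem gives that $\ce_{01}$ is a \CSS\ in $\g_{\star 1}$ and in $\g_{01}\oplus\g_{10}$.

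Collecting these, we recover exactly the six possible coincidences of \CSS\ enumerated at the beginning of Section~\ref{sect3}. The substantive work has already been done in Theorem~\ref{thm:sovpad2}; the only thing to verify here is the bookkeeping of which little component plays the role of the ``$(11)$''-component after each change of the reference pair. This is an elementary eigenvalue computation and is the only thing that could conceivably cause confusion, but it presents no real obstacle.
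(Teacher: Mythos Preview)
Your proof is correct and is exactly the argument the paper intends: the corollary is stated without proof immediately after Theorem~\ref{thm:sovpad2}, and the implicit reasoning is precisely the threefold application of that theorem to the three dyads inside the triad, using the $\mathbb S_3$-symmetry of the quaternionic decomposition. Your bookkeeping of which component plays the role of $\g_{11}$ under each relabeling is accurate.
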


\noindent
Again, the coincidence of \CSS\ can be expressed in terms of ranks of symmetric spaces:
\begin{cor}
If\/ $\{\sigma_1,\sigma_2\}$ is a dyad, then  
$\rk(G^{\sigma_1\sigma_2}/G^{\sigma_1}\cap G^{\sigma_2})= \rk (G/G^{\sigma_1})$.
\end{cor}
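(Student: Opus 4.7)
The plan is to translate the claimed rank identity into a comparison of dimensions of Cartan subspaces, and then invoke Theorem~\ref{thm:sovpad2}. First I would identify the symmetric-pair structure carried by $G^{\sigma_3}/(G^{\sigma_1}\cap G^{\sigma_2})$. Since $\sigma_1\sigma_2=\sigma_3$ acts trivially on $\g^{\sigma_3}=\g_{00}\oplus\g_{11}$, the restrictions $\sigma_1|_{\g^{\sigma_3}}$ and $\sigma_2|_{\g^{\sigma_3}}$ coincide; together they define an involution of $G^{\sigma_3}$ whose fixed-point subgroup has Lie algebra $\g^{\sigma_1}\cap\g^{\sigma_2}=\g_{00}$. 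Its $(-1)$-eigenspace is precisely $\g_{11}$, so the rank of this little symmetric space equals $\dim\ce_{11}$.

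Next I would rewrite the right-hand side: by definition, $\rk(G/G^{\sigma_1})=\dim\ce_{1\star}$. Thus the corollary reduces to the equality $\dim\ce_{11}=\dim\ce_{1\star}$. This is exactly what Theorem~\ref{thm:sovpad2} delivers: because $\{\sigma_1,\sigma_2\}$ is a dyad, a \CSS\ $\ce_{11}\subset\g_{11}$ may be extended to a \CSS\ in $\g_{1\star}$ or in $\g_{\star 1}$. In the first case the dimensions agree directly. In the second case one gets $\dim\ce_{11}=\dim\ce_{\star 1}=\rk(G/G^{\sigma_2})$, and since $\sigma_1$ and $\sigma_2$ are conjugate (this is part of being a dyad), the symmetric pairs $(\g,\g^{\sigma_1})$ and $(\g,\g^{\sigma_2})$ are isomorphic, so $\rk(G/G^{\sigma_2})=\rk(G/G^{\sigma_1})$.

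There is no genuine obstacle in this step — all the substance has been absorbed into Theorem~\ref{thm:sovpad2}. The only small point that needs mentioning is the identification of the little involution on $\g^{\sigma_3}$ and the verification that its Cartan subspace is indeed $\ce_{11}$; once that dictionary between ranks of symmetric spaces and Cartan subspaces is laid out, the corollary follows in a line.
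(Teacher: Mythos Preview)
Your proposal is correct and follows exactly the paper's intended route: the corollary is stated immediately after Theorem~\ref{thm:sovpad2} as a direct translation of the coincidence of Cartan subspaces into the language of ranks, and your argument spells out precisely this translation. One minor remark: the proof of Theorem~\ref{thm:sovpad2} actually establishes that $\ce_{11}$ is a \CSS\ in \emph{both} $\g_{1\star}$ and $\g_{\star 1}$ simultaneously (the ``or'' is inclusive, since the proof shows $\el_{01}=\el_{10}=0$), so your case distinction, while harmless, is not needed.
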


Theorems~\ref{thm:sovpad1} and \ref{thm:sovpad2} show that, for many  
triples of commuting involutions,
there is a coincidence of \CSS. However, this is not always the case, and
we provide below two examples of commuting triples without coincidences of \CSS. 

\begin{ex}   \label{ex:h+h}
Let $\g$ be a semisimple Lie algebra and $\sigma$ an involution of
$\g$ with the corresponding $\BZ_2$-grading  $\g=\g_0\oplus\g_1$.
We specify the requirements on $\sigma$ below.
Set $\tilde\g=\g\oplus\g$  and define three involutions of $\tilde\g$ as follows:

$\sigma_1(x_1,x_2)=(x_2,x_1)$, \quad
$\sigma_2(x_1,x_2)=(\sigma(x_1),\sigma(x_2))$, \quad
$\sigma_3=\sigma_1\sigma_2$.

\noindent
Then $\tilde\g^{\sigma_1}=\Delta(\g)$, the diagonal; \ 
$\tilde\g^{\sigma_2}=\g_0\oplus\g_0$; \ 
$\tilde\g^{\sigma_3}=\{ (x,\sigma(x))\mid x\in\g \}$. 
\\
Set $\Delta_-(M):=\{(m,-m)\mid m\in M\}$ for any subspace $M\subset \g$.
Then the  corresponding quaternionic decomposition is: 
\begin{center}
\setlength{\unitlength}{0.02in}
\begin{picture}(90,28)(-10,5)
\put(-30,7){$\tilde\g=$}
    \put(0,15){$\Delta(\g_0)$}    \put(40,15){$\Delta(\g_1)$}
    \put(-2,0){$\Delta_-(\g_0)$}      \put(38,0){$\Delta_-(\g_1)$}
\qbezier[40](-10,9),(32,9),(70,9)            
\qbezier[20](33,-3),(33,11),(33,24)      
\put(29.9,7){$\oplus$}   
\put(31,-12){{\color{my_color}$\sigma_2$}}
\put(80,7){{\color{my_color}$\sigma_1$}}
\end{picture}  
\end{center}
\vskip3ex
Let $\ce$ be a \CSS\ of $\g_1$ and $\dim\ce=r$. Let $\te$ be a Cartan subalgebra of
$\g$ and $\te_0$ a Cartan subalgebra of $\g_0$. 
Both little and big \CSS\ can explicitly be described. We have

\centerline{
$\ce_{01}=\Delta(\ce)\subset \Delta(\g_1)$, \ $\ce_{10}=\Delta_-(\te_0)
\subset\Delta_-(\g_0)$, \ 
$\ce_{11}=\Delta_-(\ce)\subset \Delta_-(\g_1)$
}

and 

\centerline{$\ce_{1\star}=\Delta_-(\te)\subset \Delta_-(\g)$, \ 
$\ce_{\star 1}=\ce\oplus\ce\subset \g_1\oplus\g_1$, \ 
$\ce_{\star,1-\star}=\{t, -\sigma(t)\mid t\in \te\} \subset\{x, -\sigma(x)\mid x\in \g\}$.
}

\noindent
The absence of coincidences means that each big \CSS\ has strictly bigger
dimension than either of two little \CSS\ that can belong to it.
This yields the following conditions:
\begin{gather*}
r=\dim \ce_{11} < \dim \ce_{1\star}=\rk\g, 
\quad \rk\g_0=\dim \ce_{10} < \dim \ce_{1\star}=\rk\g,      \\
r=\dim \ce_{11} < \dim \ce_{\star 1}=2r, 
\quad r=\dim \ce_{01} < \dim \ce_{\star 1}=2r, \\
\rk\g_0=\dim \ce_{10} < \dim \ce_{\star,1-\star}=\rk\g, 
\quad r=\dim \ce_{01} < \dim \ce_{\star,1-\star}=\rk\g . 
\end{gather*}

\noindent
All this amounts to the inequalities
$\rk\g_0<\rk\g$ and $r < \rk\g$. Both are satisfied if and only if
$\sigma$ is outer and not of maximal rank. E.g., such an involution $\sigma$ exists
if $\g$ is a simple Lie algebra of type $\GR{A}{2n+1}$ or $\GR{D}{2n+1}$ or $\GR{E}{6}$.
\end{ex}

A  drawback of Example~\ref{ex:h+h} is that $\tilde\g$ is not simple.
Actually, it is not easy to discover such an example for simple Lie algebras.
A coincidence of some little and big \CSS\ is rather a rule, than exception.
For instance, any triple of commuting  involutions of exceptional Lie algebras
contains either a dyad or an involution of maximal rank, see \cite[Table\,1]{kollross}.
We can also prove that there is always a coincidence of \CSS\  
for the triples of commuting  involutions of $\g=\sln$.
The following example concerns algebras of type $\GR{D}{N}$.

\begin{ex}   \label{ex:so_2n}
$\g=\mathfrak{so}_{2N}$ and $G=SO_{2N}$. Consider three involutions of $\g$ with the following fixed-point
subalgebras: 

$\sigma_1, \sigma_2$:  \   $\mathfrak{gl}_N$; \quad
$\sigma_3$:  \ $\sone\oplus\some$, \ $m+n=N$.

For $N$ even, $\Int(G)$ contains two conjugacy classes of involutions with the
fixed-point subalgebra $\mathfrak{gl}_N$, and we will arrange that $\sigma_1, \sigma_2$
belong to different conjugacy classes. Namely, assume that $\g$ is represented by 
the matrices of order $2N$ that are skew-symmetric w.r.t. the antidiagonal
and define the $\sigma_i$'s using certain diagonal matrices, as follows:

$\sigma_1=\Int\bigl(\text{diag}(\underbrace{i,\dots,i}_{N},\underbrace{-i,\dots,-i}_{N})\bigr)$;

$\sigma_2=\Int\bigl(\text{diag}(\underbrace{i,\dots,i}_{m},\underbrace{-i,\dots,-i}_{n},
\underbrace{i,\dots,i}_{n},\underbrace{-i,\dots,-i}_{m}
)\bigr)$;

$\sigma_3=\sigma_1\sigma_2=\Int\bigl(\text{diag}(
\underbrace{-1,\dots,-1}_{m},\underbrace{1,\dots,1}_{n},
\underbrace{1,\dots,1}_{n},\underbrace{-1,\dots,-1}_{m}
)\bigr)$.

\noindent
Here $i=\sqrt{-1}$. One easily presents
this quaternionic decomposition in the matrix form.  Taking the eigenspaces of
$\sigma_1$ and $\sigma_2$ partition the  matrices in  $\g$
in $16$ blocks, and we indicate the subspace $\g_{ij}$ which each block belongs to:
 
\begin{center} $\g=\begin{pmatrix}
\g_{00} & \g_{01} & \g_{10} & \g_{11} \\
\g_{01} & \g_{00} & \g_{11} & \fbox{$\g_{10}$} \\
\g_{10} & \g_{11} & \g_{00} & \g_{01} \\
\g_{11} & \g_{10} & \g_{01} & \g_{00} 
\end{pmatrix}$  \end{center}
The diagonal blocks consists of  square matrices of order $m,n,n,m$, respectively. 
Hence the framed block 
consists of rectangular matrices of shape $n\times m$.
Then one computes that
\\
$\dim\ce_{01}=\dim\ce_{10}=\min\{n,m\}$, 
$\dim\ce_{11}=[n/2]+[m/2]$; 
\quad and 
\\
$\dim\ce_{1\star}=\dim\ce_{\star 1}=[n+m/2]$, \ 
$\dim\ce_{\star,1-\star}=2\min\{n,m\}$.

If $n,m$ are odd, then $\sigma_1, \sigma_2$ are not conjugate.
Furthermore, if  $n,m$ are odd and $n\ne m$, then there is no coincidence of \CSS,
since $[n/2]+[m/2]<[n+m/2]$ and $\min\{n,m\}< [n+m/2]$.
\end{ex}

\begin{rmk}  \label{rmk:geom-smysl}
A coincidence of Cartan subspaces has the following invariant-theoretic meaning.
Suppose that $\ce_{10}\subset \g_{10}$ is also a \CSS\ in $\g_{10}\oplus\g_{11}=\g_{1\star}$.
Then, in view of Chevalley's restriction theorem, the natural restriction homomorphism
\[
   \varrho: \bbk[\g_{1\star}]^{G_{0\star}} \to \bbk[\g_{10}]^{G_{00}}
\]
is injective, and it makes $\bbk[\g_{10}]^{G_{00}}$ a finite $\bbk[\g_{1\star}]^{G_{0\star}}$-module. The  degree of the finite ring extension equals the ratio
of orders of the corresponding generalised Weyl groups.
It should be noted that $\bbk[\g_{10}]^{G_{00}}$ is always a finite $\varrho(\bbk[\g_{1\star}]^{G_{0\star}})$-module. That is, the point is that such a coincidence of
\CSS\ implies the injectivity of $\varrho$.

Putting this in the geometric form, we obtain the commutative diagram
\begin{center}
$\begin{array}{ccc}   \g_{10} & \stackrel{i}{\hookrightarrow}  &\g_{1\star}  \\
                   \downarrow  & & \downarrow \\
         \g_{10}\md G_{00} &  \stackrel{\bar i}{\rightarrow } & \g_{1\star}\md G_{0\star} 
\end{array}$
\end{center}
where the vertical arrows are quotient morphisms, $i$ is the embedding, 
and the morphism $\bar i$ is finite and surjective.
\end{rmk}

\section{Degenerations of symmetric spaces and isotropy representations}  
\label{sect4}

\noindent
For any $\sigma\in\Inv(\g)$ with $\BZ_2$-grading $\g=\g_0\oplus\g_1$,  
there is a non-reductive Lie algebra, which is a contraction of $\g$. Namely,  the 
semi-direct product $\g\langle\sigma\rangle=\g_0\ltimes\g_1^a$
is called a $\BZ_2$-{\it contraction} of $\g$ (with respect to $\sigma$).
Here the superscript `$a$'  means that  the $\g_0$-module $\g_1$ is regarded as an abelian 
ideal of $\g\langle\sigma\rangle$.  Then 
$\g_1^a$ is also the nilpotent radical of $\g\langle\sigma\rangle$.
The corresponding connected group $G\langle\sigma\rangle$ is a semi-direct product of 
$G_0$ and the abelian unipotent radical $\exp(\g_1^a)$, i.e., 
$G\langle\sigma\rangle=G_0\ltimes \exp(\g_1^a)$.   

Invariant-theoretic properties of the adjoint and coadjoint representations
of $G\langle\sigma\rangle$ have been studied in \cite{rims07, coadj07}. 
By \cite[Thm.\,6.2]{rims07},  the algebra 
$\bbk[\g\langle\sigma\rangle]^{G\langle\sigma\rangle}$ is always polynomial.
In \cite{coadj07}, it is proved  that  the 
algebra  $\bbk[\g\langle\sigma\rangle^*]^{G\langle\sigma\rangle}$
is polynomial in many cases. 
There is also a useful method of ``contraction'' of $G$-invariants.
Namely, to any homogeneous $f\in \bbk[\g]^G$ one can associate an element
of either $\bbk[\g\langle\sigma\rangle]^{G\langle\sigma\rangle}$ or  
$\bbk[\g\langle\sigma\rangle^*]^{G\langle\sigma\rangle}$ \cite[Prop.\,3.1]{coadj07}.
In the context of quaternionic decompositions, we may extend the scope of this method
beyond the (co)adjoint representations.

Recall that we work with a  quaternionic decomposition
\begin{center}
{\setlength{\unitlength}{0.02in}
\begin{picture}(35,22)(-40,5)
\put(-12,7){$\g=$}
    \put(8,15){$\g_{00}$}    \put(28,15){$\g_{01}$}
    \put(8,3){$\g_{10}$}      \put(28,3){$\g_{11}$}
\qbezier[22](5,9),(22,9),(40,9)            
\qbezier[22](23,-1),(23,11),(23,24)      
\put(19.9,7){$\oplus$}   
\put(21,-7){{\color{my_color}$\sigma_2$}}
\put(43,7){{\color{my_color}$\sigma_1$}}
\end{picture}  \hfil
}
\vskip2.5ex
\end{center}
and $\sigma_3=\sigma_1\sigma_2$.
Consider the $\BZ_2$-contraction of $\g^{\sigma_1}$ with respect to $\sigma_2$
(or $\sigma_3$, which is the same), that is,  set $\ka_{01}:=\g^{\sigma_1}\langle\sigma_2\rangle
=\g_{00}\ltimes\g_{01}^a$.
The vector space $\g_{1\star}$ can be regarded as $\ka_{01}$-module in two different ways.
In both cases, the subalgebra $\g_{00}\subset \ka_{01}$ acts on $\g_{1\star}$ as it was in $\g$.
Let $x\in\g_{01}$ and $(y_0,y_1)\in\g_{10}\oplus\g_{11}$. 
For  the action of the abelian nilpotent radical $\g_{01}^a\subset\ka_{01}$, the two
possibilities are:

a) \  $x{\cdot}(y_0,y_1)=(0, [x,y_0])$;

b) \  $x{\cdot}(y_0,y_1)=([x,y_1],0)$.

\noindent
The $\ka_{01}$-modules obtained in this way are denoted by
$\g_{10}{\,\propto\,}\g_{11}$ and $\g_{11}{\,\propto\,}\g_{10}$, respectively.
From a slightly different angle, these possibilities can be realised as follows:

\noindent
a')  \ Take  the $\BZ_2$-contraction of $\g$ with respect to $\sigma_2$, i.e.,
$\g\langle\sigma_2\rangle=\g_{\star 0}\ltimes\g_{\star 1}^a$. 
Identifying $\g$ and $\g\langle\sigma_2\rangle$ as vector
spaces, we notice that 
the linear operators $\sigma_1$ and $\sigma_3$  remain  involutions of the {\sl Lie algebra\/} $\g\langle\sigma_2\rangle$.
Taking the eigenspaces of $\sigma_1$ in $\g\langle\sigma_2\rangle$ yields
$\g\langle\sigma_2\rangle^{\sigma_1}=\ka_{01}$ and
$\g\langle\sigma_2\rangle^{(\sigma_1)}_1=\g_{10}{\,\propto\,}\g_{11}$.

\noindent
b') \ Likewise, starting with 
$\g\langle\sigma_3\rangle$,
we end  up with $\g\langle\sigma_3\rangle^{\sigma_1}=\ka_{01}$ and
$\g\langle\sigma_3\rangle^{(\sigma_1)}_1=\g_{11}{\,\propto\,}\g_{10}$.

We have $G\langle\sigma_2\rangle=G_{\star 0}\ltimes \exp(\g_{\star 1}^a)$ and
both $G_{\star 0}$ and $\g_{\star 1}$ are $\sigma_1$-stable. It follows that 
$\sigma_1$ can be lifted to an involution of $G\langle\sigma_2\rangle$
and the identity
component of $G\langle\sigma_2\rangle^{\sigma_1}$ is
$K_{01}=G_{00}\ltimes \exp(\g_{01}^a)=:G_{00}\ltimes N_{01}$.
(Likewise, $K_{01}$ is the identity
component of $G\langle\sigma_3\rangle^{\sigma_1}$.)
The exponential map $\exp: \g_{01}\to N_{01}$ is an isomorphism of varieties and
the action of $N_{01}$  is given by
\begin{gather}   \label{eq:N01:V}
   \exp(x){\cdot}(y_0,y_1)=(y_0,y_1)+x{\cdot}(y_0,y_1)=(y_0, y_1+[x,y_0]) \  \text{ for } \ (y_0,y_1)\in\g_{10}{\,\propto\,}\g_{11},
\\
   \notag \hphantom{(4.2)}
    \exp(x){\cdot}(y_1,y_0)=(y_1,y_0)+x{\cdot}(y_1,y_0)=(y_1, y_0+[x,y_1])  \ \text{ for } \ (y_1,y_0)\in\g_{11}{\,\propto\,}\g_{10} .
\end{gather}

\noindent
Note that $G_{00}$ is a subgroup of both $K_{01}$ and $G_{0\star}$, and the action of 
$G_{00}$ does not vary under the passage from the $G_{0\star}$-module $\g_{1\star}$ to 
the $K_{01}$-modules $\g_{10}{\,\propto\,}\g_{11}$ or $\g_{11}{\,\propto\,}\g_{10}$.
\\
Summarising the previous discussion, we get the following:

\begin{utv}   \label{utv:claim}
1)  The involution $\sigma_1\in\Inv(\g)$ can be regarded as involution of $\g\langle\sigma_2\rangle$ and of $G\langle\sigma_2\rangle$. The group $K_{01}$ is the identity 
component of $G\langle\sigma_2\rangle^{\sigma_1}$ and the corresponding isotropy 
representation is $(K_{01}: \g_{10}{\,\propto\,}\g_{11})$.
\\ \indent 
2) Likewise, $K_{01}$ is the identity 
component of $G\langle\sigma_3\rangle^{\sigma_1}$ and the corresponding isotropy 
representation is $(K_{01}: \g_{11}{\,\propto\,}\g_{10})$.
\\ \indent
3) Both representations can be understood as different degenerations of  
$(G_{0\star}\,{:}\,\g_{1\star})$. The symmetric spaces 
$G\langle\sigma_2\rangle/K_{01}$ and $G\langle\sigma_3\rangle/K_{01}$ can be regarded
as different degenerations of $G/G_{\star 0}$.
\end{utv}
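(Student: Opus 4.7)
The plan is to verify the three parts of the claim in sequence, most of which amounts to unpacking the constructions set up just before the statement. The only genuine checks are that $\sigma_1$ respects the contracted Lie bracket and that the identity component of the fixed-point subgroup splits as a semidirect product as claimed.

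For part~1, I first check that $\sigma_1$ is an automorphism of the Lie algebra $\g\langle\sigma_2\rangle$. Because $\sigma_1$ and $\sigma_2$ commute, $\sigma_1$ preserves the $\sigma_2$-eigenspace decomposition $\g=\g_{\star 0}\oplus\g_{\star 1}$. The contracted bracket agrees with the original one on $\g_{\star 0}\times\g_{\star 0}$ and on $\g_{\star 0}\times\g_{\star 1}$, and is zero on $\g_{\star 1}\times\g_{\star 1}$; in each of the three cases, $\sigma_1$-equivariance is immediate from $\sigma_1$-equivariance of the bracket of $\g$ together with $\sigma_1$-stability of each $\sigma_2$-eigenspace. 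Lifting to the group, since both $G_{\star 0}$ and $\exp(\g_{\star 1}^a)$ are $\sigma_1$-stable, $\sigma_1$ lifts to an involution of the connected group $G\langle\sigma_2\rangle=G_{\star 0}\ltimes \exp(\g_{\star 1}^a)$. The identity component of its fixed-point subgroup is then $(G_{\star 0}^{\sigma_1})^\circ \ltimes \exp((\g_{\star 1})^{\sigma_1})=G_{00}\ltimes N_{01}=K_{01}$, since on the abelian unipotent radical the operations $\exp$ and ``take $\sigma_1$-fixed points'' commute. The $(-1)$-eigenspace of $\sigma_1$ on the underlying vector space $\g$ is $\g_{1\star}=\g_{10}\oplus\g_{11}$, and the action of $K_{01}=G_{00}\ltimes N_{01}$ obtained from the bracket of $\g\langle\sigma_2\rangle$ is: $G_{00}$ acts by restriction of its $\g$-action, while $\g_{01}$ acts via $[\g_{01},\g_{10}]\subset\g_{11}$ and $[\g_{01},\g_{11}]=0$ (the latter because both sides lie in $\g_{\star 1}$, whose internal bracket has been killed by the contraction). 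This is precisely formula~(a), i.e.\ the module $\g_{10}\propto\g_{11}$.

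Part~2 is proved by repeating the argument verbatim with $\sigma_2$ replaced by $\sigma_3=\sigma_1\sigma_2$. Since $\sigma_1$ and $\sigma_3$ also commute, $\sigma_1$ preserves the $\sigma_3$-eigenspace decomposition $\g=(\g_{00}\oplus\g_{11})\oplus(\g_{10}\oplus\g_{01})$, hence is an involution of $\g\langle\sigma_3\rangle$ and lifts to $G\langle\sigma_3\rangle$. The $\sigma_1$-fixed part of $\g_{10}\oplus\g_{01}$ is again $\g_{01}$, so the identity component of $G\langle\sigma_3\rangle^{\sigma_1}$ is again $K_{01}$. In this contraction $\g_{10}$ sits inside the abelian ideal, so $\g_{01}$ acts via $[\cdot,\cdot]:\g_{01}\otimes\g_{11}\to\g_{10}$ and kills $\g_{10}$, recovering formula~(b), i.e.\ the module $\g_{11}\propto\g_{10}$.

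For part~3, the degeneration statement, I would invoke the standard realisation of $\g\langle\sigma_i\rangle$ as a flat family: the one-parameter family $\phi_t=\mathrm{id}_{\g^{\sigma_i}}\oplus\,t\cdot\mathrm{id}_{\g_1^{(\sigma_i)}}$, $t\in\bbk^\times$, transports the bracket of $\g$ to a family of brackets whose limit at $t\to 0$ is that of $\g\langle\sigma_i\rangle$, and likewise for the group multiplication and for the quotient $G/G^{\sigma_i}$. Restricting $\phi_t$ to $\g_{1\star}$ and noting that it intertwines the $G_{0\star}$-action with the $K_{01}$-action coming from parts~1 and~2 exhibits both $(K_{01}:\g_{10}\propto\g_{11})$ and $(K_{01}:\g_{11}\propto\g_{10})$ as degenerations of the big isotropy representation $(G_{0\star}:\g_{1\star})$, and, at the level of homogeneous spaces, exhibits $G\langle\sigma_2\rangle/K_{01}$ and $G\langle\sigma_3\rangle/K_{01}$ as degenerations of $G/G_{0\star}$. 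There is no real obstacle in this argument: the hardest step is just disciplined bookkeeping of the four $(i,j)$-eigenspaces under the three commuting involutions, and the whole proof is essentially an organised reading of the discussion preceding the claim.
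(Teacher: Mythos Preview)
Your proposal is correct and follows essentially the same approach as the paper: the claim is explicitly introduced there as ``Summarising the previous discussion'', with no separate proof, and your argument is a careful unpacking of exactly that discussion (items a$'$), b$'$), and the paragraph on lifting $\sigma_1$ to $G\langle\sigma_2\rangle$). Your treatment of part~3 via the explicit one-parameter family $\phi_t$ is slightly more detailed than what the paper offers, but this is the standard way to make precise the word ``degeneration'' for $\BZ_2$-contractions and is entirely in the spirit of the paper's use of that term.
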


\begin{lm}
The $K_{01}$-modules $\g_{10}{\,\propto\,}\g_{11}$ and $\g_{11}{\,\propto\,}\g_{10}$ are 
dual to each other.
\end{lm}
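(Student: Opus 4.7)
My plan is to exhibit an explicit non-degenerate $K_{01}$-invariant pairing between the two modules, using the Killing form $\kappa$ of $\g$ as raw material. Both modules have the same underlying vector space $\g_{1\star}=\g_{10}\oplus\g_{11}$, so write a generic element as $u=u_{10}+u_{11}$ with $u_{ij}\in\g_{ij}$, and define
\[
\langle u,w\rangle:=\kappa(u_{10},w_{10})+\kappa(u_{11},w_{11}),
\]
viewing the left argument as an element of $V=\g_{10}{\,\propto\,}\g_{11}$ and the right argument as an element of $V^{*}=\g_{11}{\,\propto\,}\g_{10}$. The first thing I would verify is that this form is non-degenerate: because each $\sigma_i$ is an automorphism of $\g$ preserving $\kappa$, distinct bihomogeneous components of the $\BZ_2\times\BZ_2$-grading are mutually $\kappa$-orthogonal, and $\kappa$ restricted to $\g_{10}$ (resp.\ $\g_{11}$) is non-degenerate. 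Hence $\langle\,,\,\rangle$ is non-degenerate as a pairing of vector spaces.

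Next I would check $K_{01}$-invariance, separately for the two factors of $K_{01}=G_{00}\ltimes N_{01}$. The subgroup $G_{00}$ sits inside $G$ and acts via $\Ad$ on both $V$ and $V^{*}$ in the same way as on $\g_{1\star}\subset\g$; since $\Ad(G_{00})$ preserves $\kappa$ and preserves each summand $\g_{10}$, $\g_{11}$ setwise, $G_{00}$-invariance of $\langle\,,\,\rangle$ is immediate.

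For $N_{01}=\exp(\g_{01}^{a})$, connected and unipotent, it suffices to verify invariance at the Lie algebra level. Here I use the formulae \eqref{eq:N01:V}: for $x\in\g_{01}$, the infinitesimal action on $V$ sends $u_{10}+u_{11}$ to $[x,u_{10}]\in\g_{11}$, while the action on $V^{*}$ sends $w_{10}+w_{11}$ to $[x,w_{11}]\in\g_{10}$. Computing
\[
\langle x\cdot u,w\rangle+\langle u,x\cdot w\rangle
=\kappa\bigl([x,u_{10}],w_{11}\bigr)+\kappa\bigl(u_{10},[x,w_{11}]\bigr),
\]
the right-hand side vanishes by $\ad$-invariance of $\kappa$. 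This gives $N_{01}$-invariance and completes the proof.

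There is no real obstacle; the only point that requires a moment's care is bookkeeping, namely recognising that the $\g_{01}$-action lands in the \emph{opposite} summand in $V$ versus $V^{*}$, which is precisely what makes a single $\ad$-invariance identity do both jobs at once.
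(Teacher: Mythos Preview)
Your proof is correct and follows essentially the same approach as the paper: the paper simply writes down the pairing $(\bar y,\bar z)\mapsto \kappa(y_0,z_0)+\kappa(y_1,z_1)$ and asserts it is $K_{01}$-invariant, while you have supplied the verification of non-degeneracy and of invariance for both factors of $K_{01}$.
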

\begin{proof}
If  $\bar y=(y_0,y_1)\in\g_{10}{\,\propto\,}\g_{11}$  and 
$\bar z=(z_1,z_0)\in\g_{11}{\,\propto\,}\g_{10}$, then  
$(\bar y,\bar z)\mapsto \kappa(y_0,z_0)+\kappa(y_1,z_1)$ is the required 
$K_{01}$-invariant  pairing.
\end{proof}

\begin{lm}
The homogeneous space $G\langle\sigma_2\rangle/K_{01}$ is affine. 
\end{lm}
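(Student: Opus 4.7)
The plan is to realize $G\langle\sigma_2\rangle/K_{01}$ as a $G_{00}$-equivariant vector bundle over an affine reductive symmetric space; the result will then follow from Matsushima's theorem together with the standard fact that the total space of a vector bundle over an affine base is again affine.

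First, I would exploit the decompositions
\[
G\langle\sigma_2\rangle = G_{\star 0}\ltimes U,\qquad K_{01}=G_{00}\ltimes N_{01},
\]
where $U=\exp(\g_{\star 1}^a)$ and $N_{01}=\exp(\g_{01}^a)$. Because $\g_{\star 1}^a$ is abelian, $U$ is commutative and the splitting $\g_{\star 1}=\g_{01}\oplus\g_{11}$ yields $U=\exp(\g_{01})\exp(\g_{11})$. Writing any $g\in G\langle\sigma_2\rangle$ in the form $s\exp(x)\exp(y)$ with $s\in G_{\star 0}$, $x\in\g_{01}$, $y\in\g_{11}$, the commutativity of $U$ permits one to move $\exp(x)$ past $\exp(y)$ and absorb it into $K_{01}$. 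Hence every coset in $G\langle\sigma_2\rangle/K_{01}$ admits a representative of the form $s\exp(y)$ with $(s,y)\in G_{\star 0}\times\g_{11}$.

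The next step is to determine when two such representatives coincide modulo $K_{01}$. Using the semidirect-product identity $h\exp(v)=\exp(\Ad(h)v)\,h$ together with the abelianness of $U$, a direct calculation gives
\[
\bigl(s_1\exp(y_1)\bigr)^{-1}\bigl(s_2\exp(y_2)\bigr)\;=\;\exp\bigl(-y_1+\Ad(h)y_2\bigr)\,h,\qquad h:=s_1^{-1}s_2.
\]
This element lies in $K_{01}=G_{00}\cdot N_{01}$ iff $h\in G_{00}$ and $-y_1+\Ad(h)y_2\in\g_{01}$. But $G_{00}$ preserves the $\sigma_1$-eigenspace $\g_{11}$, so $-y_1+\Ad(h)y_2\in\g_{11}$; since $\g_{11}\cap\g_{01}=0$, the condition forces $y_2=\Ad(h^{-1})y_1$, which is precisely the equivalence relation defining the associated bundle. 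This identification produces an isomorphism of varieties $G\langle\sigma_2\rangle/K_{01}\cong G_{\star 0}\times^{G_{00}}\g_{11}$.

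To conclude, $G_{\star 0}$ is reductive (being the identity component of $G^{\sigma_2}$), and $G_{\star 0}/G_{00}$ is a finite cover of the reductive symmetric space $G_{\star 0}/(G_{\star 0})^{\sigma_1}$, which is affine by Matsushima's theorem; finite covers of affine varieties are affine. Since a vector bundle over an affine base is affine, the assertion follows. The one step requiring actual work is the semidirect-product identity in the middle paragraph; its correctness hinges on the commutativity of $U$, without which the clean absorption of $\exp(\g_{01})$ into $K_{01}$ would break.
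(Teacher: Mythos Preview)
Your argument is correct and in fact yields more than the lemma asks: you explicitly identify $G\langle\sigma_2\rangle/K_{01}$ with the associated bundle $G_{\star 0}\times^{G_{00}}\g_{11}$. The paper takes a much shorter route: it simply observes that the unipotent radical of $K_{01}$, namely $N_{01}=\exp(\g_{01}^a)$, is contained in the unipotent radical $\exp(\g_{\star 1}^a)$ of $G\langle\sigma_2\rangle$, and then invokes a criterion of Bia\l ynicki-Birula (\cite[Cor.~2]{bb63}) asserting that this containment already forces the homogeneous space to be affine. That is a one-line proof, whereas yours requires the coset computation and the bundle identification; on the other hand, your approach makes the geometry of the quotient completely transparent, which the black-box citation does not.

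Two minor remarks on your write-up. First, in the last paragraph you can drop the finite-cover detour: $G_{00}$ is itself reductive (it is the identity component of the fixed-point group of $\sigma_1$ on the reductive group $G_{\star 0}$), so Matsushima's theorem applies directly to $G_{\star 0}/G_{00}$. Second, the passage from ``bijection on points'' to ``isomorphism of varieties'' deserves a word; the cleanest justification is to note that, as a variety, $G\langle\sigma_2\rangle\simeq G_{\star 0}\times\g_{11}\times\g_{01}$ via $(s,y,x)\mapsto s\exp(y)\exp(x)$, and under this identification the right $K_{01}$-action visibly has quotient $G_{\star 0}\times^{G_{00}}\g_{11}$.
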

\begin{proof} Let $\ca^u$ denote the unipotent radical of an algebraic group $\ca$.
By construction,  $G\langle\sigma_2\rangle^u=\exp(\g_{\star 1}^a)$ and 
$(K_{01})^u=\exp(\g_{01}^a)$. Hence $(K_{01})^u\subset G\langle\sigma_2\rangle^u$.
By \cite[Cor.\,2]{bb63}, this condition is sufficient for 
$G\langle\sigma_2\rangle/K_{01}$ to be affine.
\end{proof}

{\bf Hint}. Using the  notation $\g_{10}{\,\propto\,}\g_{11}$ or $\g_{11}{\,\propto\,}\g_{10}$
always means that the vector space $\g_{1\star}$ is regarded as  a 
$K_{01}$-module in the prescribed way.

Similar notation is used for the other  $\BZ_2$-contractions of $\g$ and their involutions.
For instance, 
$\g\langle\sigma_1\rangle^{\sigma_2}=\g\langle\sigma_3\rangle^{\sigma_2}=\g_{00}\ltimes\g_{10}^a=:\ka_{10}$,
$\g\langle\sigma_1\rangle^{(\sigma_2)}_1=\g_{01}{\,\propto\,}\g_{11}$, and $\g\langle\sigma_3\rangle^{(\sigma_2)}_1=\g_{11}{\,\propto\,}\g_{01}$.
Then $N_{10}=\exp(\g_{10}^a)$ and  $K_{10}=G_{00}\ltimes N_{10}$ acts on $\g_{01}{\,\propto\,}\g_{11}$ or
$\g_{11}{\,\propto\,}\g_{01}$.
In this way, we obtain 6 new isotropy representations related to 3 possible $\BZ_2$-contractions of $\g$.

\begin{ex}
The adjoint and coadjoint representations of any $\BZ_2$-contraction can  be 
obtained as a special case of this construction.  Given $\sigma\in\Inv(\g)$, 
consider three involutions of $\tilde \g=\g\oplus\g$ as in Example~\ref{ex:h+h}:
\vskip.6ex
\centerline{
$\sigma_1(x,y)=(y,x)$, \ $\sigma_2(x,y)=(\sigma(x),\sigma(y))$, \ and 
$\sigma_3=\sigma_1\sigma_2$.}
\vskip.6ex\noindent
Then  $\tilde\g\langle\sigma_2\rangle^{\sigma_1}=\g\langle\sigma\rangle$ and
$\tilde\g\langle\sigma_2\rangle^{(\sigma_1)}_1$ is isomorphic to the adjoint module of
$\g\langle\sigma\rangle$; whereas 
$\tilde\g\langle\sigma_3\rangle^{\sigma_1}=\g\langle\sigma\rangle$ and
$\tilde\g\langle\sigma_3\rangle^{(\sigma_1)}_1$ is isomorphic to the coadjoint module of
$\g\langle\sigma\rangle$.
\end{ex}

{\bf Convention.} {\sl Whenever it is notationally convenient, we will expose our results in 
the most symmetric form, i.e., for an arbitrary permutation of indices $(10,01,11)$. 
Otherwise, we stick to our sample choice with $K=K_{01}$ and the $K$-module 
$V=\g_{10}{\,\propto\,}\g_{11}$.}

Let $(\ap,\beta,\gamma)$ be a permutation of $(10,01,11)$. The representation of
$K_\beta=G_{00}\ltimes N_\beta$ in $V_{\ap\gamma}:=\g_\ap{\,\propto\,}\g_\gamma$ is said to be a {\it degenerated isotropy representation}. 
There is a special situation in which the  algebra 
$\bbk[V_{\ap\gamma}]^{K_\beta}$ can explicitly be described.
Since $\bbk[V_{\ap\gamma}]^{K_\beta}=
(\bbk[V_{\ap\gamma}]^{N_{\beta}})^{G_{00}}$, 
the first step is to describe the algebra  $\bbk[V_{\ap\gamma}]^{N_{\beta}}$. 
It follows from the appropriate analogue of \eqref{eq:N01:V} that the dimension of any $N_{\beta}$-orbit in $V_{\ap\gamma}$ is at most 
$\dim\g_{\gamma}$ and if $p: V_{\ap\gamma}\to \g_\ap$ is the projection along 
$\g_\gamma$, then $\bbk[\g_{\ap}]\simeq p^*(\bbk[\g_{\ap}])\subset 
\bbk[V_{\ap\gamma}]^{N_{\beta}}$.

\begin{thm}   \label{thm:no-extra-inv}
Suppose that there is\/ $\tilde y\in \g_{\ap}$ such that $[\tilde y,\g_{\beta}]=\g_{\gamma}$. Then 
\begin{itemize}
\item[{\sf (i)}] \ $\bbk[V_{\ap\gamma}]^{N_{\beta}}\simeq\bbk[\g_{\ap}]$, 
\item[{\sf (ii)}] \ $\bbk[V_{\ap\gamma}]^{K_\beta}\simeq\bbk[\g_{\ap}]^{G_{00}}$
is a polynomial algebra, 
\item[{\sf (iii)}] \ $\bbk[V_{\ap\gamma}]$ is a free\/ 
$\bbk[V_{\ap\gamma}]^{K_\beta}$-module.
\end{itemize}
\end{thm}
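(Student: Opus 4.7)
The plan is to prove (i) by an elementary density argument, and then derive (ii) and (iii) from (i) together with the standard invariant theory of the little isotropy representation $(G_{00}\,{:}\,\g_{\ap})$. The latter is available because, in each of the three cases $\ap\in\{10,01,11\}$, $\g_\ap$ is the odd part of a little $\BZ_2$-grading: $\g_{10}$ comes from the little grading of $\g^{\sigma_2}=\g_{\star 0}$, $\g_{01}$ from $\g^{\sigma_1}=\g_{0\star}$, and $\g_{11}$ from $\g^{\sigma_3}$.

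For (i), the starting point is the analogue of \eqref{eq:N01:V}:
\[
\exp(x)\cdot (y_\ap, y_\gamma) = (y_\ap,\, y_\gamma + [x,y_\ap]) \quad\text{for } x\in\g_\beta.
\]
This shows at once that the projection $p\colon V_{\ap\gamma}\to\g_\ap$ is $N_\beta$-invariant, giving the inclusion $p^*\bbk[\g_\ap]\subset\bbk[V_{\ap\gamma}]^{N_\beta}$. For the reverse inclusion, introduce
\[
\Omega = \{y\in\g_\ap\mid [\g_\beta, y]=\g_\gamma\}.
\]
By hypothesis $\tilde y\in\Omega$, so $\Omega\ne\varnothing$. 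Since $\Omega$ is the locus where the linear map $\g_\beta\to\g_\gamma$, $x\mapsto[x,y]$, attains maximal rank $\dim\g_\gamma$, it is Zariski-open and therefore dense in the irreducible variety $\g_\ap$. For $y_\ap\in\Omega$, the displayed formula shows that the $N_\beta$-orbit of $(y_\ap,y_\gamma)$ equals the entire fiber $\{y_\ap\}\times\g_\gamma$. Writing $f\in\bbk[V_{\ap\gamma}]^{N_\beta}$ as a polynomial in coordinates on $\g_\gamma$ with coefficients in $\bbk[\g_\ap]$, invariance forces all non-constant coefficients to vanish on $\Omega$, and hence identically on $\g_\ap$. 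Thus $f\in p^*\bbk[\g_\ap]$, proving (i).

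For (ii), since $K_\beta = G_{00}\ltimes N_\beta$ and $p$ is $G_{00}$-equivariant (because each of $\g_\ap,\g_\gamma$ is $G_{00}$-stable in $\g$), one has
\[
\bbk[V_{\ap\gamma}]^{K_\beta} = \bigl(\bbk[V_{\ap\gamma}]^{N_\beta}\bigr)^{G_{00}} = p^*\bbk[\g_\ap]^{G_{00}}.
\]
The polynomiality of $\bbk[\g_\ap]^{G_{00}}$ is then the Chevalley restriction theorem for the little isotropy representation $(G_{00}\,{:}\,\g_\ap)$, recorded among the bullet points of Section~\ref{sect1}. For (iii), use $\bbk[V_{\ap\gamma}]\simeq \bbk[\g_\ap]\otimes_\bbk\bbk[\g_\gamma]$; the freeness of $\bbk[\g_\ap]$ over $\bbk[\g_\ap]^{G_{00}}$ (the last bullet of Section~\ref{sect1}) is preserved under tensoring over $\bbk$ with the polynomial algebra $\bbk[\g_\gamma]$, so $\bbk[V_{\ap\gamma}]$ is free over $\bbk[\g_\ap]^{G_{00}}\simeq\bbk[V_{\ap\gamma}]^{K_\beta}$.

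The only real content lies in (i), and within it the single nontrivial point is that $\Omega$ has a non-empty (hence dense) open complement-free interior; this is purely linear-algebraic once the hypothesis on $\tilde y$ is translated into the rank condition $\rank \ad(\tilde y)|_{\g_\beta}=\dim\g_\gamma$. Everything else is bookkeeping with the semi-direct product $K_\beta = G_{00}\ltimes N_\beta$ and the standard freeness/polynomiality results for symmetric pairs.
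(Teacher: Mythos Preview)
Your proof is correct and follows essentially the same line as the paper's: both identify the dense open set $\Omega\subset\g_\ap$ on which fibres of the projection $V_{\ap\gamma}\to\g_\ap$ are single $N_\beta$-orbits, and then deduce (ii) and (iii) from the Kostant--Rallis theory for the little isotropy representation $(G_{00}:\g_\ap)$. The only difference is cosmetic: the paper packages the step from ``generic fibres are orbits'' to ``$N_\beta$-invariants are pulled back from $\g_\ap$'' via the Igusa lemma, whereas you argue directly by writing $f$ as a polynomial in the $\g_\gamma$-coordinates and using density of $\Omega$---an equally valid and slightly more elementary route in this polynomial setting.
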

\begin{proof}
(i)  
Since  $\g_{\gamma}\subset V_{\ap\gamma}$ is $N_{\beta}$-stable, the projection 
$\pi: V_{\ap\gamma}\to V_{\ap\gamma}/\g_{\gamma}\simeq \g_{\ap}$ is $N_{\beta}$-equivariant. Moreover, the induced
$N_{\beta}$-action on $V_{\ap\gamma}/\g_{\gamma}$ is trivial. In order to prove that
$\pi$ is the quotient by $N_{\beta}$, we use the Igusa lemma, see e.g. \cite[Lemma\,6.1]{rims07}.
Since $\pi$ is onto and $V_{\ap\gamma}/\g_{\gamma}$ is normal, it suffices to prove that generic fibres  of 
$\pi$ are $N_{\beta}$-orbits.
If $[\tilde y,\g_{\beta}]=\g_{\gamma}$ and we identify $\tilde y$ with $(\tilde y,0)\in V_{\ap\gamma}$, then the orbit
$N_{\beta}{\cdot}\tilde y=\tilde y+[\tilde y,\g_{\beta}]$ is precisely a fibre of 
$\pi$.
The appropriate analogue of Eq.~\eqref{eq:N01:V} shows that 
$\dim N_{\beta}{\cdot}(y',y'')=\dim [y',\g_{\beta}]$; 
in particular, it depends only on $y'\in\g_\ap$. 
Consequently, there is a dense open subset $\Omega\subset \g_{\ap}$ such
that $[y,\g_{\beta}]=\g_{\gamma}$ for all $y\in\Omega$. Finally,  identifying $\Omega$ with an open
subset in $V_{\ap\gamma}/\g_{\gamma}$, we see that $\pi^{-1}(y)$ is a sole orbit for all $y\in\Omega$.

(ii), (iii) This follows from (i) and  the corresponding properties of the algebra of invariants for
the isotropy representation $(G_{00}:\g_{\ap})$.
\end{proof}

The above proof shows that 
$\bbk[V_{\ap\gamma}]^{N_\beta}=p^*(\bbk[\g_\ap])$ if and only if 
there is\/ $\tilde y\in \g_{\ap}$ such that $[\tilde y,\g_{\beta}]=\g_{\gamma}$
if and only if $\max_{v\in V_{\ap\gamma}}\dim N_\beta{\cdot}v=\dim\g_\gamma$.

We will see in a moment that the hypothesis of Theorem~\ref{thm:no-extra-inv}
is equivalent to a coincidence of \CSS. 

\begin{lm}   \label{lm:raspred-equal}
For any quaternionic decomposition and 
$x\in\g_{\ap}$, we have $\dim [\g_{\beta},x]=\dim [\g_{\gamma},x]$.
\end{lm}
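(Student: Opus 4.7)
\begin{proof*}
The plan is to use the Killing form $\kappa$ to identify the two maps $\ad(x)\colon \g_\beta\to\g_\gamma$ and $\ad(x)\colon \g_\gamma\to\g_\beta$ (which land in the expected spaces since $[\g_\ap,\g_\beta]\subset\g_\gamma$ and $[\g_\ap,\g_\gamma]\subset\g_\beta$, by the bracket rules of the quaternionic decomposition) as mutual adjoints, and then invoke the standard fact that a linear map and its transpose have the same rank.

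First I would observe that $\kappa$ is $\sigma_1$- and $\sigma_2$-invariant, so $\kappa(\g_{ij},\g_{kl})=0$ unless $(i,j)=(k,l)$. Since $\g$ is semisimple, the nondegeneracy of $\kappa$ on $\g$ therefore forces $\kappa$ to restrict to a nondegenerate form on each summand $\g_{ij}$, in particular on $\g_\beta$ and on $\g_\gamma$.

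Next, for $x\in\g_\ap$, $y\in\g_\beta$, $z\in\g_\gamma$, the associativity identity $\kappa([x,y],z)=-\kappa(y,[x,z])$ says exactly that the two operators
\[
 A:=\ad(x)\colon \g_\beta\longrightarrow\g_\gamma,\qquad B:=\ad(x)\colon\g_\gamma\longrightarrow\g_\beta
\]
are negatives of each other's transposes once $\g_\beta^*$ and $\g_\gamma^*$ are identified with $\g_\beta$ and $\g_\gamma$ via the nondegenerate restrictions $\kappa|_{\g_\beta}$ and $\kappa|_{\g_\gamma}$. Hence $\rk A=\rk B$, which reads
\[
 \dim[\g_\beta,x]=\dim\Ima A=\dim\Ima B=\dim[\g_\gamma,x].
\]

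I expect no real obstacle here: the only thing worth checking carefully is the nondegeneracy of $\kappa|_{\g_{ij}}$, which follows automatically from the orthogonality of distinct $(i,j)$-components and the nondegeneracy of $\kappa$ on $\g$. Everything else is a one-line adjointness computation.
\end{proof*}
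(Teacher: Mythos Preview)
Your proof is correct. It is also genuinely different from the paper's argument, and in fact more elementary.

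The paper does not use the Killing-form duality at all. Instead, it applies twice the Kostant--Rallis dimension identity \cite[Prop.~5]{kr71}, which for a $\BZ_2$-grading $\g=\g_0\oplus\g_1$ and $x\in\g_1$ asserts $\dim\g_1^x-\dim\g_0^x=\dim\g_1-\dim\g_0$. Writing $d_{ij}=\dim\z_\g(x)_{ij}$, one uses this first for the little grading $\g_{00}\oplus\g_\ap$ (with $x\in\g_\ap$) to get $d_\ap-d_{00}=\dim\g_\ap-\dim\g_{00}$, and then for the big grading $(\g_{00}\oplus\g_\beta)\oplus(\g_\ap\oplus\g_\gamma)$ to get $(d_\ap+d_\gamma)-(d_{00}+d_\beta)=(\dim\g_\ap+\dim\g_\gamma)-(\dim\g_{00}+\dim\g_\beta)$. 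Subtracting gives $d_\gamma-d_\beta=\dim\g_\gamma-\dim\g_\beta$, which is the claim.

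Your route is shorter and self-contained: the adjointness $\kappa([x,y],z)=-\kappa(y,[x,z])$ together with the nondegeneracy of $\kappa$ on each $\g_{ij}$ immediately exhibits $\ad(x)\colon\g_\beta\to\g_\gamma$ and $\ad(x)\colon\g_\gamma\to\g_\beta$ as (negatives of) mutual transposes, so their ranks agree. The paper's approach, by contrast, ties the lemma to the orbit-dimension equality $\dim G{\cdot}x=2\dim G_0{\cdot}x$, which is thematically relevant elsewhere in the article but is a heavier tool than needed for this particular statement.
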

\begin{proof}
Set $d_{ij}=\dim \z_\g(x)_{ij}$. Applying \cite[Prop.\,5]{kr71} to one little and one big
$\BZ_2$-grading, we obtain:

$\begin{array}{rcrcl}  d_{\ap}-d_{00} & =& \dim\g_{\ap} &- &\dim\g_{00} \ , \\
        (d_{\ap}+d_{\gamma})-(d_{00}+d_{\beta}) & =& \dim(\g_{\ap}\oplus\g_{\gamma})& -
        &\dim(\g_{00}\oplus\g_{\beta}) \ .
        \end{array}$
\vskip1ex
\noindent
Taking the difference yields 
$d_{\gamma}-d_{\beta} = \dim\g_{\gamma}-\dim\g_{\beta}$, as required.
\end{proof}

\begin{prop}   \label{prop:rel-CSS}
There exists $x\in\g_{\ap}$ such that
$[\g_{\beta},x]=\g_{\gamma}$ 
if and only if any \CSS\ $\ce_{\ap}\subset \g_{\ap}$  is also a \CSS\ in
$\g_{\ap}\oplus\g_{\gamma}$;
\end{prop}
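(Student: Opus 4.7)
The plan is to prove both directions via the characterization \eqref{char-prop}: a subspace $\ce_{\ap}$ of pairwise commuting semisimple elements is a \CSS\ in $\g_{\ap}\oplus\g_{\gamma}$ iff $\z_\g(\ce_{\ap})\cap(\g_{\ap}\oplus\g_{\gamma})=\ce_{\ap}$. Since $\ce_{\ap}\subset\g_{\ap}$ is already a little \CSS, we have $\z_\g(\ce_{\ap})\cap\g_{\ap}=\ce_{\ap}$ for free, so the characterization reduces to the single condition $\z_\g(\ce_{\ap})\cap\g_{\gamma}=0$. The technical bridge between the commutator maps $\ad(x)\colon\g_{\beta}\to\g_{\gamma}$ and $\ad(x)\colon\g_{\gamma}\to\g_{\beta}$ will be Lemma~\ref{lm:raspred-equal}, which equates their ranks for every $x\in\g_{\ap}$.

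For the implication $(\Leftarrow)$: pick $x$ generic in $\ce_{\ap}$. By assumption $\ce_{\ap}$ is a \CSS\ of $\g_{\ap}\oplus\g_{\gamma}$ and $x$ is a regular element of it, so $\z_\g(x)\cap(\g_{\ap}\oplus\g_{\gamma})=\ce_{\ap}$; in particular $\z_\g(x)\cap\g_{\gamma}=0$, which means $\ad(x)\colon\g_{\gamma}\to\g_{\beta}$ is injective and $\dim[\g_{\gamma},x]=\dim\g_{\gamma}$. Lemma~\ref{lm:raspred-equal} then yields $\dim[\g_{\beta},x]=\dim\g_{\gamma}$, and since $[\g_{\beta},x]\subset\g_{\gamma}$ this forces $[\g_{\beta},x]=\g_{\gamma}$.

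For the implication $(\Rightarrow)$: let $U:=\{x\in\g_{\ap}\mid [\g_{\beta},x]=\g_{\gamma}\}$, a Zariski-open subset of $\g_{\ap}$ (it is defined by maximality of rank of the linear map $\ad(x)\colon\g_{\beta}\to\g_{\gamma}$ varying polynomially in $x$), nonempty by hypothesis and clearly $G_{00}$-invariant since $G_{00}$ preserves both $\g_{\beta}$ and $\g_{\gamma}$. Because $G_{00}{\cdot}\ce_{\ap}$ is dense in $\g_{\ap}$, the open set $U$ meets some $G_{00}$-translate of $\ce_{\ap}$, hence meets $\ce_{\ap}$ itself. Choose any $x_0\in U\cap\ce_{\ap}$. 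Applying Lemma~\ref{lm:raspred-equal} once more gives $\dim[\g_{\gamma},x_0]=\dim[\g_{\beta},x_0]=\dim\g_{\gamma}$, so $\ad(x_0)\colon\g_{\gamma}\to\g_{\beta}$ is injective and $\z_\g(x_0)\cap\g_{\gamma}=0$. Since $x_0\in\ce_{\ap}$, we have $\z_\g(\ce_{\ap})\subset\z_\g(x_0)$, whence $\z_\g(\ce_{\ap})\cap\g_{\gamma}=0$ and the characterization is verified.

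The only subtle point is the $G_{00}$-invariance/density argument in the $(\Rightarrow)$ direction, which allows us to promote the pointwise existence of $x\in\g_{\ap}$ with $[\g_{\beta},x]=\g_{\gamma}$ to a choice of such $x$ already lying inside the little \CSS\ $\ce_{\ap}$; everything else is formal linear algebra fed by Lemma~\ref{lm:raspred-equal}.
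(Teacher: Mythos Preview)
Your proof is correct and follows essentially the same approach as the paper: both directions hinge on Lemma~\ref{lm:raspred-equal}, the openness of the condition $[\g_\beta,x]=\g_\gamma$, and the characterisation~\eqref{char-prop}. The only cosmetic difference is that in the $(\Rightarrow)$ direction the paper picks a $G_{00}$-regular semisimple $x$ and builds $\ce_\ap$ as $\z_\g(x)_\ap$ (relying tacitly on $G_{00}$-conjugacy of all little \CSS), whereas you fix an arbitrary $\ce_\ap$ first and then locate a suitable $x_0$ inside it via the density of $G_{00}{\cdot}\ce_\ap$; your version makes the ``any \CSS'' quantifier in the statement more transparently addressed.
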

\begin{proof}
``$\Rightarrow$''. The set of elements $x\in\g_{\ap}$ having such property is open.
If $x$ is $G_{00}$-regular and semisimple, then $\z_\g(x)_{\ap}$ is a \CSS\ in $\g_{\ap}$.
By Lemma~\ref{lm:raspred-equal}, the assumption implies that
$\z_\g(x)_{\gamma}=\{0\}$. Thus, the \CSS\ $\ce_{\ap}:=\z_\g(x)_{\ap}$ has the
property that  $\z_\g(\ce_{\ap})\cap (\g_{\ap}\oplus\g_{\gamma})
=\z_\g(x)\cap (\g_{\ap}\oplus\g_{\gamma})=
\ce_{\ap}$, i.e.,
$\ce_{\ap}$ is a \CSS\ in $\g_{\ap}\oplus\g_{\gamma}$.
 
``$\Leftarrow$''.  Reversing the preceding argument shows that any $G_{00}$-regular
semsimple 
element $x\in\ce_{\ap}$ has the required property.
\end{proof}

\noindent
This proposition yields a simple necessary condition for coincidence: 
\begin{cor}
If $\ce_\ap$ is also a \CSS\ in
$\g_\ap\oplus\g_\gamma$, then $\dim\g_\beta\ge \dim\g_\gamma$.
\end{cor}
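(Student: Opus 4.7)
The plan is very short, since the corollary is a direct consequence of Proposition~\ref{prop:rel-CSS}. First I would apply that proposition: the assumption that $\ce_\ap$ is a \CSS\ in $\g_\ap\oplus\g_\gamma$ gives an element $x\in\g_\ap$ such that $[\g_\beta,x]=\g_\gamma$.

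Next, I would observe that by the quaternionic bracket relations recalled in the introduction, $[\g_\ap,\g_\beta]\subset\g_\gamma$, so the linear map
\[
\ad(x)\vert_{\g_\beta}:\g_\beta\longrightarrow\g_\gamma,\qquad y\longmapsto [y,x],
\]
is well-defined. The condition $[\g_\beta,x]=\g_\gamma$ says exactly that this map is surjective. Consequently $\dim\g_\beta\ge \dim\ad(x)(\g_\beta)=\dim\g_\gamma$, which is the desired inequality.

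There is no real obstacle here: the bracket-dimension identity of Lemma~\ref{lm:raspred-equal} together with Proposition~\ref{prop:rel-CSS} has already done all the work, and the corollary is just the trivial observation that a surjective linear map cannot increase dimension. I would present the proof in two lines, essentially: invoke Proposition~\ref{prop:rel-CSS} to produce $x$, then note that $\ad(x):\g_\beta\twoheadrightarrow\g_\gamma$ forces $\dim\g_\beta\ge\dim\g_\gamma$.
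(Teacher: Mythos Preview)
Your proof is correct and matches the paper's intended argument: the paper presents this corollary immediately after Proposition~\ref{prop:rel-CSS} with no separate proof, since the surjectivity of $\ad(x)\vert_{\g_\beta}:\g_\beta\to\g_\gamma$ trivially gives the dimension inequality. One small remark: Lemma~\ref{lm:raspred-equal} is not actually needed for the corollary itself (it is used inside the proof of Proposition~\ref{prop:rel-CSS}), so you could omit that reference from your write-up.
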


\noindent
Combining our previous results, we get an explicit description of the algebra of 
invariants for  certain degenerated  isotropy representations.

\begin{thm}   \label{thm:inv-degen}
Let $\g$ be a semisimple Lie algebra and  $\{\sigma_1,\sigma_2,\sigma_3\}$  a triple of involutions of $\g$ such that $\sigma_1\sigma_2=\sigma_3$.
\begin{itemize}
\item[{\sf (i)}] \ Suppose that $\sigma_1$ is of  maximal rank. Then

$\bbk[\g_{11}{\,\propto\,}\g_{01}]^{G_{00}\ltimes N_{10}}\simeq
\bbk[\g_{11}]^{G_{00}}$ \ and \ 
$\bbk[\g_{10}{\,\propto\,}\g_{01}]^{G_{00}\ltimes N_{11}}\simeq
\bbk[\g_{10}]^{G_{00}}$.

\item[{\sf (ii)}] \ Suppose that $\{\sigma_1,\sigma_2\}$ is a dyad. Then

$\bbk[\g_{11}{\,\propto\,}\g_{10}]^{G_{00}\ltimes N_{01}}\simeq
\bbk[\g_{11}]^{G_{00}}$ \ and \ 
$\bbk[\g_{11}{\,\propto\,}\g_{01}]^{G_{00}\ltimes N_{10}}\simeq
\bbk[\g_{11}]^{G_{00}}$.

\item[{\sf (iii)}]  \ Suppose that $\{\sigma_1,\sigma_2,\sigma_3\}$ is a triad, then 
$\bbk[\g_{\ap}\ltimes\g_{\gamma}]^{G_{00}\ltimes N_{\beta}}\simeq 
\bbk[\g_{\ap}]^{G_{00}}$ for all permutations $(\ap,\beta,\gamma)$ of\/ 
$(01,10,11)$.
\end{itemize}
\noindent
In all previous cases, $\bbk[\g_{\ap}\ltimes\g_{\gamma}]^{N_\beta}=\bbk[\g_\ap]$ and\/ $\bbk[\g_{\ap}\ltimes\g_{\gamma}]$ is a free\/ 
$\bbk[\g_{\ap}\ltimes\g_{\gamma}]^{G_{00}\ltimes N_{\beta}}$-module.
\end{thm}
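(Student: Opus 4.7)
The plan is to reduce each of the three cases to Theorem~\ref{thm:no-extra-inv} by verifying its hypothesis. Recall that the hypothesis ``there exists $\tilde y\in\g_\ap$ with $[\tilde y,\g_\beta]=\g_\gamma$'' has been shown, in Proposition~\ref{prop:rel-CSS}, to be equivalent to the coincidence of Cartan subspaces
\[
\ce_\ap \ \text{is also a \CSS\ in}\ \g_\ap\oplus\g_\gamma .
\]
Hence the whole proof amounts to matching each algebra in the statement with the appropriate coincidence already established in Section~\ref{sect3}, and then invoking Theorem~\ref{thm:no-extra-inv}. The last clause of the theorem (description of $N_\beta$-invariants and freeness over the full invariants) is then automatic from parts (i)--(iii) of that same theorem, applied to each of the listed triples $(\ap,\beta,\gamma)$.

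For part (i), $\sigma_1$ is of maximal rank. Theorem~\ref{thm:sovpad1} gives two coincidences: $\ce_{11}$ is a \CSS\ in $\g_{11}\oplus\g_{01}=\g_{\star 1}$ (taking $\ap=11$, $\gamma=01$, $\beta=10$), and $\ce_{10}$ is a \CSS\ in $\g_{10}\oplus\g_{01}=\g_{\star,1-\star}$ (taking $\ap=10$, $\gamma=01$, $\beta=11$). Via Proposition~\ref{prop:rel-CSS} these furnish the required $\tilde y$'s in $\g_{11}$ and $\g_{10}$, respectively; Theorem~\ref{thm:no-extra-inv} then yields the two isomorphisms of invariant algebras. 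For part (ii), $\{\sigma_1,\sigma_2\}$ is a dyad. By Theorem~\ref{thm:sovpad2}, $\ce_{11}$ is a \CSS\ in both $\g_{1\star}=\g_{11}\oplus\g_{10}$ and $\g_{\star 1}=\g_{11}\oplus\g_{01}$. These are exactly the two coincidences needed: for the first algebra take $(\ap,\beta,\gamma)=(11,01,10)$, and for the second take $(11,10,01)$; again Theorem~\ref{thm:no-extra-inv} finishes. For part (iii), a triad, Corollary~\ref{cor:triada} states that all six conceivable coincidences of \CSS\ occur, so Proposition~\ref{prop:rel-CSS} supplies the required element $\tilde y\in\g_\ap$ with $[\tilde y,\g_\beta]=\g_\gamma$ for every permutation $(\ap,\beta,\gamma)$ of $(01,10,11)$, and Theorem~\ref{thm:no-extra-inv} again delivers the stated description.

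There is no real obstacle here: the substantive work was done earlier in Section~\ref{sect3} (establishing the coincidences) and in Theorem~\ref{thm:no-extra-inv} (exploiting them via the Igusa lemma to compute the $N_\beta$-invariants as $p^*(\bbk[\g_\ap])$). What one must be a little careful about is bookkeeping the roles of the three indices $(\ap,\beta,\gamma)$: for each algebra in the statement, $\beta$ is determined by $N_\beta$, $\ap$ by the first factor of $\g_\ap\propto\g_\gamma$, and $\gamma$ by the second, and the relevant coincidence must be the one asserting that a \CSS\ of $\g_\ap$ is still a \CSS\ of $\g_\ap\oplus\g_\gamma$. The final sentence about $\bbk[\g_\ap\propto\g_\gamma]^{N_\beta}=p^*(\bbk[\g_\ap])$ and the freeness of $\bbk[\g_\ap\propto\g_\gamma]$ over $\bbk[\g_\ap\propto\g_\gamma]^{G_{00}\ltimes N_\beta}$ is immediate from parts (i) and (iii) of Theorem~\ref{thm:no-extra-inv}, since in each case the same hypothesis on $\tilde y$ has been verified.
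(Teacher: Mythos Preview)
Your proposal is correct and follows essentially the same approach as the paper: for each part you invoke the relevant coincidence of Cartan subspaces from Section~\ref{sect3} (Theorem~\ref{thm:sovpad1}, Theorem~\ref{thm:sovpad2}, Corollary~\ref{cor:triada}), translate it via Proposition~\ref{prop:rel-CSS} into the hypothesis of Theorem~\ref{thm:no-extra-inv}, and then read off all conclusions from that theorem. Your index bookkeeping is accurate, and the paper's own proof is precisely this combination of references, stated more tersely.
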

\begin{proof}
(i) \ Combine Theorem~\ref{thm:sovpad1}, Theorem~\ref{thm:no-extra-inv}, and  Proposition~\ref{prop:rel-CSS}.
\\
(ii) \ Combine Theorem~\ref{thm:sovpad2}, Theorem~\ref{thm:no-extra-inv}, and  Proposition~\ref{prop:rel-CSS}.
\\
(iii) \ Combine Corollary~\ref{cor:triada}, Theorem~\ref{thm:no-extra-inv}, and  Proposition~\ref{prop:rel-CSS}.
\end{proof}

If $[\g_{\beta},x]$ is a proper subspace of $\g_{\gamma}$ for all $x\in\g_{\ap}$, i.e.,
the maximal dimension of $N_{\beta}$-orbits in $V_{\ap\gamma}$  is less than $\dim\g_{\gamma}$, then 
 a general description of $\bbk[V_{\ap\gamma}]^{N_{\beta}}$ 
(and hence of $\bbk[V_{\ap\gamma}]^{K_\beta}$) is not available.
Nevertheless, the $K_\beta$-module $V_{\ap\gamma}$ always retains some good properties of initial isotropy representation of a reductive group.

Below, we consider the problem of existence of a generic stabiliser for the linear 
action of an algebraic group $K$ on $V$.  Recall that a subalgebra $\q\subset \ka$ is a 
{\it generic stabiliser\/}  if there exists
a dense open subset $\Omega\subset V$ such that the stabiliser 
$\ka^v=\{ k\in \ka\mid k{\cdot}v=0\}$ 
is $K$-conjugate to $\q$ for all $v\in\Omega$. We then write $\q=\sgp(K{:}V)$ and the elements  
of $\Omega$ are called $K$-{\it generic points} (or just {\it generic points\/} 
if the group is clear  from the context).
A generic stabiliser always exists for the {\sf reductive\/} group actions on 
{\sf smooth} affine varieties, see e.g. \cite[\S\,7]{t55}. Moreover, 
for the isotropy representation $(G_0:\g_1)$, 
$x\in\g_1$ is $G_0$-generic {\sl if and only if\/} $x$ is $G_0$-regular and semisimple 
{\sl if and only if\/} $\z_\g(x)\cap\g_1$ is a \CSS\ in $\g_1$.

In the following theorem, we write $\g^\xi$ in place of $\z_\g(\xi)$ ($\xi\in\g$), and 
$\g^\xi_{ij}:=\g^\xi\cap\g_{ij}$.

\begin{thm}  \label{thm:sgp-i-pgp}
{\sf (i)} \ The degenerated isotropy representation $(K=G_{00}\ltimes N_{01}:V=\g_{10}{\,\propto\,}\g_{11})$ always has 
a generic stabiliser. More precisely, 
let $\xi\in\g_{10}$  be a $G_{00}$-generic point for the (little isotropy) representation
$(G_{00}:\g_{10})$ and let $\eta$ be a generic point in the $G_{00}^\xi$-module
$\g_{11}^\xi$.
Then $v=(\xi, \eta)$
is a $K$-generic point in  $V$   and
$\sgp(K:V)=\sgp(G_{00}^\xi:\g_{11}^\xi) \ltimes (\g_{01}^\xi)^a$.

{\sf (ii)} \ $\trdeg\bbk(V)^{K}=\dim \g_{10}\md G_{00}+ 
\dim \g_{11}^\xi\md G_{00}^\xi$.

{\sf (iii)} \ $\bbk(V)^{K}$ is the fraction field of\/ $\bbk[V]^K$.
\end{thm}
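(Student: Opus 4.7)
Take $v=(\xi,\eta)$ with $\xi\in\g_{10}$ a $G_{00}$-regular semisimple element and $\eta\in\g_{11}^\xi$. The key structural input is that $\ad\xi$ is semisimple on $\g$ and, by the bracket relations of the quaternionic decomposition, maps $\g_{01}\to\g_{11}$ and $\g_{11}\to\g_{01}$; this yields the direct sum decomposition $\g_{11}=\g_{11}^\xi\oplus [\g_{01},\xi]$. Decomposing a general element of $K=G_{00}\ltimes N_{01}$ as $gn$ with $g\in G_{00}$ and $n=\exp(x)$, formula~\eqref{eq:N01:V} gives $gn\cdot v=(g\xi,\,g\eta+[gx,\xi])$; equating to $v$ forces $g\in G_{00}^\xi$ and $\eta-g\eta=[gx,\xi]$. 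Since $g\in G_{00}^\xi$ stabilises $\g_{11}^\xi$, the left-hand side lies in $\g_{11}^\xi$ while the right-hand side lies in $[\g_{01},\xi]$, so by the direct sum both vanish; hence $g\in(G_{00}^\xi)^\eta$ and $x\in\g_{01}^\xi$. Thus $K^v=(G_{00}^\xi)^\eta\ltimes\exp((\g_{01}^\xi)^a)$, and for generic $\eta\in\g_{11}^\xi$ (a generic stabiliser for the reductive-group action $(G_{00}^\xi:\g_{11}^\xi)$ exists by classical theory) this is conjugate to $\sgp(G_{00}^\xi:\g_{11}^\xi)\ltimes(\g_{01}^\xi)^a$. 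Since any $(\xi',\eta')\in V$ with $\xi'$ $G_{00}$-regular can be moved by $N_{01}$ to have second coordinate in $\g_{11}^{\xi'}$, such $v$ form a dense open subset of $V$.

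\textbf{Part (ii).} The general formula $\trdeg\bbk(V)^K=\dim V-\dim K+\dim K^v$ for a $K$-generic point $v$, combined with (i), gives
\[
\trdeg\bbk(V)^K=(\dim\g_{10}+\dim\g_{11})-(\dim G_{00}+\dim\g_{01})+\dim\sgp(G_{00}^\xi:\g_{11}^\xi)+\dim\g_{01}^\xi.
\]
Lemma~\ref{lm:raspred-equal} applied to $\xi\in\g_{10}$ gives $\dim\g_{01}-\dim\g_{01}^\xi=\dim\g_{11}-\dim\g_{11}^\xi$. Substituting and regrouping, the right-hand side becomes
\[
\bigl(\dim\g_{10}-\dim G_{00}+\dim G_{00}^\xi\bigr)+\bigl(\dim\g_{11}^\xi-\dim G_{00}^\xi+\dim\sgp(G_{00}^\xi:\g_{11}^\xi)\bigr),
\]
which equals $\dim\g_{10}\md G_{00}+\dim\g_{11}^\xi\md G_{00}^\xi$ by the usual dimension formulas for quotients by the reductive groups $G_{00}$ and $G_{00}^\xi$.

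\textbf{Part (iii).} This is the main obstacle, as $K$ is non-reductive. My strategy is via a rational section. Set $\tilde V:=\{(\xi,\eta)\in\g_{10}^{\mathrm{reg}}\oplus\g_{11}:\eta\in\g_{11}^\xi\}$, where $\g_{10}^{\mathrm{reg}}$ denotes the $G_{00}$-regular semisimple locus; this is a $G_{00}$-stable vector bundle over $\g_{10}^{\mathrm{reg}}$. The rational map $\Phi:V\dashrightarrow\tilde V$, $(\xi,\eta)\mapsto(\xi,\pi_\xi(\eta))$, with $\pi_\xi:\g_{11}\to\g_{11}^\xi$ the projection along $[\g_{01},\xi]$, has generic fibres equal to the $N_{01}$-orbits (by the direct sum in (i)). Hence $\bbk(V)^{N_{01}}\simeq\bbk(\tilde V)$ and $\bbk(V)^K\simeq\bbk(\tilde V)^{G_{00}}$, and reductivity of $G_{00}$ on the smooth rational variety $\tilde V$ yields $\bbk(\tilde V)^{G_{00}}=\mathrm{Frac}(\bbk[\tilde V]^{G_{00}})$. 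The delicate step is then to realise each $G_{00}$-invariant on $\tilde V$ as a ratio of two $K$-invariant polynomials on $V$: pull back along $\Phi$, and clear the denominators (arising from the rational dependence of $\pi_\xi$ on $\xi$) by multiplying with an appropriate $d\in\bbk[\g_{10}]^{G_{00}}\subset\bbk[V]^K$ whose zero locus absorbs the indeterminacies of $\pi_\xi$. The reverse inclusion $\mathrm{Frac}(\bbk[V]^K)\subseteq\bbk(V)^K$ is immediate, completing the equality.
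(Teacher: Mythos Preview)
Your arguments for parts (i) and (ii) are essentially the paper's own. Both compute the stabiliser of $(\xi,\eta)$ via the decomposition $\g_{11}=\g_{11}^\xi\oplus[\g_{01},\xi]$ (the paper works at the Lie-algebra level, you at the group level, but the content is the same), and both derive the transcendence-degree formula from Rosenlicht combined with Lemma~\ref{lm:raspred-equal}. Two minor points of slack: in (i) the paper is more explicit about why the computed stabiliser is truly \emph{generic}---it fixes $\ce_{10}$ and a \CSS\ $\tilde\ce\subset\g_{11}^\xi$, notes the stabiliser is literally constant on the generic locus of $\ce_{10}\times\tilde\ce$, and then checks $K\cdot(\ce_{10}{\,\propto\,}\tilde\ce)$ is dense in $V$; in (ii) your ``usual dimension formulas'' tacitly use $\trdeg\bbk(\g_{11}^\xi)^{G_{00}^\xi}=\dim\g_{11}^\xi\md G_{00}^\xi$, which is not automatic and which the paper justifies via Luna's result \cite{lu72} on orthogonal representations of reductive groups.

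Part (iii) is where you diverge, and where there is a real gap. The paper gives no self-contained argument: it simply asserts that the proof of \cite[Lemma~2.6]{coadj07} applies verbatim. Your alternative route through the rational section $\Phi$ and the intermediate variety $\tilde V$ is a reasonable strategy, but the sentence ``reductivity of $G_{00}$ on the smooth rational variety $\tilde V$ yields $\bbk(\tilde V)^{G_{00}}=\mathrm{Frac}(\bbk[\tilde V]^{G_{00}})$'' is false as a general principle---consider $\mathbb G_m$ acting by scaling on $\mathbb A^2$. What you actually need is that generic $G_{00}$-orbits in $\tilde V$ are \emph{closed}. This does hold here, ultimately because the fibrewise action $(G_{00}^\xi:\g_{11}^\xi)$ is again an isotropy representation (for the $\BZ_2$-grading $\g_{00}^\xi\oplus\g_{11}^\xi$ of $(\g^{\sigma_3})^\xi$) and hence has closed generic orbits, while the base action $(G_{00}:\g_{10}^{\mathrm{reg}})$ has all orbits closed; but you have to make that argument. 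A second, smaller issue: your $\tilde V$ need not be a vector bundle over all of $\g_{10}^{\mathrm{reg}}$, since $\dim\g_{11}^\xi$ can jump for $G_{00}$-regular $\xi$ that are not generic in their \CSS---you should restrict to the open locus where $\z_\g(\xi)=\z_\g(\ce_{10})$. Finally, the ``delicate step'' of clearing denominators is only sketched; to complete it you must show the poles of $\Phi^*(f)$ lie over a $G_{00}$-invariant divisor in $\g_{10}$, so that some power of its invariant equation serves as $d$.
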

\begin{proof} (i) \ 
As explained above, $\xi$ is semisimple and $\ce_{10}:=\g^\xi_{10}$ is a \CSS\ in $\g_{10}$.
Then 
\beq   \label{eq:dir-sum}
\g_{11}=[\g_{01},\xi]\oplus\g^\xi_{11} .
\eeq  
Note that $\g^\xi_{00}\oplus\g^\xi_{11}$ is a $\BZ_2$-grading of the reductive Lie algebra
$\g^{\sigma_3}\cap\g^\xi$.  Let $\tilde\ce$ \ be a \CSS\ in $\g^\xi_{11}$ and 
let $\eta\in \tilde\ce$ be a $G_{00}^\xi$-generic point. The stabiliser of $(\xi,\eta)$ in
$\ka$ is determined by the following conditions:
\[
\ka^{(\xi,\eta)}=\{(s_0,s_1)\in \g_{00}\ltimes\g_{01}^a \mid [s_0,\xi]=0 \ \& 
\ [s_0,\eta]+[s_1,\xi]=0\} .
\]
Hence $s_0\in \g^\xi_{00}$. Then the second equation and \eqref{eq:dir-sum} 
show that $[s_0,\eta]=0$ and $[s_1,\xi]=0$; that is, $s_0\in (\g^\xi_{00})^\eta$ and 
$s_1\in \g^\xi_{01}$. Thus,
\[
    \ka^{(\xi,\eta)}=(\g^\xi_{00})^\eta\ltimes (\g^\xi_{01})^a=
    \sgp(G_{00}^\xi:\g_{11}^\xi) \ltimes (\g_{01}^\xi)^a .
\]
Note that this stabiliser does not essentially depend on $(\xi,\eta)$. 
If $\xi'\in \ce_{10}$  and 
$\eta'\in \tilde\ce$ are some other generic points, then 
$\ka^{(\xi,\eta)}=\ka^{(\xi',\eta')}$. In view of this observation, to prove that
$\ka^{(\xi,\eta)}$ is a generic stabiliser, it is sufficient to show that the $K$-saturation
of $\ce_{10}{\,\propto\,}\tilde\ce\subset \g_{10}{\,\propto\,} \g_{11}$ is dense. 

By our constructions, there are the following relations for
$\ce_{10}$ and $\tilde\ce$:

{\bf --} \quad $G_{00}^\xi{\cdot}\tilde\ce$ is dense in $\g_{11}^\xi$ ;

{\bf --} \quad $N_{01}{\cdot}(\xi',\eta')=(\xi', [\g_{01},\xi']+\eta')$ and $ [\g_{01},\xi']= [\g_{01},\xi]$ if $\xi'\in\ce_{10}$ is generic;

{\bf --} \quad $G_{00}{\cdot}\ce_{10}$ is dense in $\g_{10}$.

\noindent
The first two relations and the fact that $G_{00}^\xi$ does not affect
$\ce_{10}=\g_{11}^\xi$ show us that
$(G_{00}^\xi\ltimes N_{01}){\cdot}(\ce_{10}{\,\propto\,} \ce)$ is dense in 
$\ce_{10}{\,\propto\,}\g_{11}$. Then the last relation implies that
$G_{00}{\cdot}(\ce_{10}{\,\propto\,}\g_{11})$ is dense in $V$.

(ii) \ 
By (i) and the Rosenlicht theorem \cite[2.3]{t55}, we have 
\begin{multline*}
  \dim\sgp(K{:}V)=\dim \g_{01}^\xi+\dim \sgp(G_{00}^\xi:\g_{11}^\xi) = \\
  \dim\g_{01}^\xi+\dim\g_{00}^\xi-\dim\g_{11}^\xi
+\trdeg \bbk(\g_{11}^\xi)^{G_{00}^\xi} .
\end{multline*}
Since $\g_{11}^\xi$ is an orthogonal $G_{00}^\xi$-module, it follows from  \cite{lu72}
that the field $\bbk(\g_{11}^\xi)^{G_{00}^\xi}$ is the fraction field of the algebra
$\bbk[\g_{11}^\xi]^{G_{00}^\xi}$.
Hence $\trdeg \bbk(\g_{11}^\xi)^{G_{00}^\xi}=\dim \g_{11}^\xi\md G_{00}^\xi$.
Then using again the Rosenlicht theorem, we obtain
\begin{multline*}
 \trdeg\bbk(V)^{K}\stackrel{\text{Rosenlicht}}{=}\dim V-\dim K+\dim \sgp(K:V){=}\\
(\un{\dim\g_{11}}+\dim\g_{10}){-}(\un{\dim\g_{01}}+\dim\g_{00}){+}
(\un{\dim\g_{01}^\xi}+
\dim\g_{00}^\xi-\un{\dim\g_{11}^\xi}+\dim \g_{11}^\xi\md G_{00}^\xi)= \\
(\dim\g_{10}-\dim\g_{00}+\dim\g_{00}^\xi)+\dim \g_{11}^\xi\md G_{00}^\xi=
\dim \g_{10}\md G_{00}+\dim \g_{11}^\xi\md G_{00}^\xi ,
\end{multline*}
where the underlined terms in the second line are cancelled out, in view of 
Lemma~\ref{lm:raspred-equal} applied to $\xi\in\g_{10}$.

(iii) \  The proof of  \cite[Lemma\,2.6]{coadj07} applies in this situation.
\end{proof}

\begin{cor}
The following conditions are equivalent:
{\sf (i)} \  $\trdeg \bbk(V)^K=\dim\g_{10}\md G_{00}$, 
{\sf (ii)} \ $\g_{11}^\xi=0$ for almost all $\xi\in\g_{10}$, 
{\sf (iii)} \ any \CSS\ $\ce_{10}\subset\g_{10} $ is also a \CSS\ in $\g_{10}\oplus\g_{11}$,
{\sf (iv)} \ $\bbk[V]^K\simeq \bbk[\g_{10}]^{G_{00}}$. 
\end{cor}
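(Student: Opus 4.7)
\begin{proof*}
The plan is to run a cycle of implications, using Theorem~\ref{thm:sgp-i-pgp} as the pivot. The two quickest edges are (iv)$\Rightarrow$(i), which is immediate from Theorem~\ref{thm:sgp-i-pgp}(iii), since if $\bbk[V]^K\simeq\bbk[\g_{10}]^{G_{00}}$ then $\trdeg\bbk(V)^K=\trdeg\bbk[V]^K=\dim\g_{10}\md G_{00}$; and (ii)$\Rightarrow$(iv), which I would deduce by applying Theorem~\ref{thm:no-extra-inv} with $(\ap,\beta,\gamma)=(10,01,11)$. Indeed, if $\g_{11}^\xi=0$ for almost all $\xi\in\g_{10}$, take any such $\xi$ as the element $\tilde y$ required in the hypothesis of that theorem: by \eqref{eq:dir-sum}, $\g_{11}=[\g_{01},\xi]\oplus\g_{11}^\xi=[\g_{01},\xi]$, and the theorem gives $\bbk[V]^K\simeq\bbk[\g_{10}]^{G_{00}}$.

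The remaining job is to establish (i)$\Leftrightarrow$(ii)$\Leftrightarrow$(iii). For (ii)$\Leftrightarrow$(iii), I would again invoke \eqref{eq:dir-sum}: for a $G_{00}$-generic (hence semisimple) $\xi\in\g_{10}$ we have $\g_{11}=[\g_{01},\xi]\oplus\g_{11}^\xi$, so the existence of $x\in\g_{10}$ with $[\g_{01},x]=\g_{11}$ is equivalent to $\g_{11}^\xi=0$ for generic $\xi$. By upper semicontinuity of $\xi\mapsto\dim\g_{11}^\xi$, the condition ``$\g_{11}^\xi=0$ for generic $\xi$'' coincides with ``for almost all $\xi$'', and by Proposition~\ref{prop:rel-CSS} it is equivalent to (iii). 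For (i)$\Leftrightarrow$(ii), Theorem~\ref{thm:sgp-i-pgp}(ii) reduces (i) to the vanishing of $\dim\g_{11}^\xi\md G_{00}^\xi$.

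The subtle point—probably the only real obstacle—is turning $\dim\g_{11}^\xi\md G_{00}^\xi=0$ into $\g_{11}^\xi=0$. The key observation is that $\g^\xi\cap\g^{\sigma_3}=\g_{00}^\xi\oplus\g_{11}^\xi$ is a symmetric pair of the reductive Lie algebra $\g^\xi$, so $(G_{00}^\xi{:}\g_{11}^\xi)$ is a (little) isotropy representation. For such a representation the quotient dimension equals the dimension of a Cartan subspace in $\g_{11}^\xi$, and the basic theory of symmetric pairs tells us that the CSS is nonzero whenever the $(-1)$-eigenspace is nonzero (semisimple elements lie in the $(-1)$-eigenspace via the $\sigma$-invariant Jordan decomposition, and density of closed orbits guarantees a nonzero one). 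This delivers the equivalence of the vanishing of $\dim\g_{11}^\xi\md G_{00}^\xi$ and $\g_{11}^\xi$ itself, closing the cycle.
\end{proof*}
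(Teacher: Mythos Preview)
Your argument is correct and is exactly the assembly of Theorem~\ref{thm:sgp-i-pgp}, Theorem~\ref{thm:no-extra-inv}, and Proposition~\ref{prop:rel-CSS} that the paper intends (the corollary is stated without proof). One small remark on the ``subtle point'': your justification via density of semisimple elements in $\g_{11}^\xi$ is fine (this is the property $G_0{\cdot}\ce$ dense in $\g_1$ listed in Section~\ref{sect1}), but an even quicker route is to note that the Killing form of $\g$ restricts nondegenerately to $\g_{11}^\xi$, giving a nonzero $G_{00}^\xi$-invariant quadratic form whenever $\g_{11}^\xi\ne 0$.
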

\begin{cor}   \label{cor:4.12}
It is always true that\/ $\trdeg \bbk(V)^K=\dim \g_{1\star}\md G_{0\star}=
\trdeg\bbk(\g_{1\star})^{G_{0\star}}$.
\end{cor}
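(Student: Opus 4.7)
\begin{proof*}
The plan is to combine Theorem~\ref{thm:sgp-i-pgp}(ii) with a direct construction of a \CSS\ of $\g_{1\star}$ whose dimension matches the right-hand side of that formula. By Theorem~\ref{thm:sgp-i-pgp}(ii) we have
\[
   \trdeg\bbk(V)^K = \dim\g_{10}\md G_{00}+\dim\g_{11}^\xi\md G_{00}^\xi ,
\]
where $\xi\in\g_{10}$ is a $G_{00}$-generic point. Since $\dim\g_{1\star}\md G_{0\star}=\dim\ce_{1\star}$ by Chevalley's restriction theorem, and likewise $\dim\g_{10}\md G_{00}=\dim\ce_{10}$, it suffices to exhibit a \CSS\ $\ce_{1\star}$ of $\g_{1\star}$ of the form $\ce_{10}\oplus\tilde\ce$, where $\tilde\ce\subset \g^\xi_{11}$ has dimension $\dim\g_{11}^\xi\md G_{00}^\xi$.

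The key geometric input is that, for generic $\xi\in\ce_{10}$, the Levi $\g^\xi=\z_\g(\xi)=\z_\g(\ce_{10})$ is $(\sigma_1,\sigma_2)$-stable and $\ce_{10}$ is central in $\g^\xi$. Moreover the characterisation~\eqref{char-prop} applied to the little grading gives $\g^\xi_{10}=\z_\g(\ce_{10})\cap\g_{10}=\ce_{10}$. The subalgebra $\g^{\xi,\sigma_3}=\g^\xi_{00}\oplus\g^\xi_{11}$ is an honest symmetric pair (the restriction of $\sigma_3$ to the reductive Levi $\g^\xi$), and I choose $\tilde\ce$ to be a \CSS\ of $\g^\xi_{11}$ in this sense; then $\dim\tilde\ce=\dim\g^\xi_{11}\md G^\xi_{00}$ as needed.

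To verify that $\ce_{10}\oplus\tilde\ce$ is a \CSS\ of $\g_{1\star}$, I use~\eqref{char-prop}: the sum consists of pairwise commuting semisimple elements since $\ce_{10}$ is central in $\g^\xi$, and one computes
\[
   \z_\g(\ce_{10}\oplus\tilde\ce)\cap\g_{1\star}
   =\z_{\g^\xi}(\tilde\ce)\cap(\ce_{10}\oplus\g^\xi_{11})
   =\ce_{10}\oplus\bigl(\z_{\g^{\xi,\sigma_3}}(\tilde\ce)\cap\g^\xi_{11}\bigr)
   =\ce_{10}\oplus\tilde\ce ,
\]
using at the last step that $\tilde\ce$ is a \CSS\ in $\g^\xi_{11}$ for the symmetric pair $(\g^{\xi,\sigma_3},\g^\xi_{00})$. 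Hence $\dim\ce_{1\star}=\dim\ce_{10}+\dim\tilde\ce$, and substituting back yields the claimed equality.

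The only potentially delicate step is the identification $\z_{\g^\xi}(\tilde\ce)\cap\g^\xi_{11}=\z_{\g^{\xi,\sigma_3}}(\tilde\ce)\cap\g^\xi_{11}$ used above; this is routine because $\g^\xi_{11}\subset\g^{\xi,\sigma_3}$ and the centraliser of a subspace of the fixed-point subalgebra always commutes with taking fixed points.
\end{proof*}
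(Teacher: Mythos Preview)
Your argument is essentially the same as the paper's: both invoke Theorem~\ref{thm:sgp-i-pgp}(ii), identify $\dim\g_{10}\md G_{00}=\dim\ce_{10}$ and $\dim\g_{11}^\xi\md G_{00}^\xi=\dim\tilde\ce$, and then observe that $\ce_{10}\oplus\tilde\ce$ is a \CSS\ of $\g_{1\star}$. You give more detail in the verification via~\eqref{char-prop}, which the paper simply asserts.

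One small omission: you do not address the second equality $\dim\g_{1\star}\md G_{0\star}=\trdeg\bbk(\g_{1\star})^{G_{0\star}}$. The paper handles this with a one-line reference to Luna~\cite{lu72}, noting it is a general property of orthogonal representations of reductive groups (equivalently, it follows from the listed properties of isotropy representations in Section~\ref{sect1}). You should add a sentence covering this.
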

\begin{proof}
Choose a \CSS\ $\ce_{10}\subset\g_{10}$.
By \cite{kr71},   $\dim\g_{10}\md G_{00}=\dim\ce_{10}$ and 
$\dim \g_{11}^\xi\md G_{00}^\xi$ is the dimension of any \CSS\ in $\g_{11}^\xi$.
Without loss of generality, we may assume that $\xi\in \ce_{10}$ and
$\z_\g(\xi)=\z_\g(\ce_{10})$.  Therefore, if $\h$ is a Cartan subspace in
$\g_{11}^\xi=\z_\g(\ce_{10})_{11}$, then $\ce_{10}\oplus\h$ is a \CSS\ in 
$\g_{1\star}$. That is, the right-hand side in Theorem~\ref{thm:sgp-i-pgp}(ii) is exactly 
$\dim \g_{1\star}\md G_{0\star}$. The second equality is a general property of orthogonal
representations of reductive groups \cite{lu72}.
\end{proof}

There is a general `contraction procedure' for obtaining $K$-invariants in $\bbk[V]$ or $\bbk[V^*]$.
Recall that $V$ and $V^*$ coincide as vector spaces and are isomorphic as $G_{00}$-modules. Only the $N_{01}$-actions on them are different.
Therefore, $\bbk[V]^K$ and $\bbk[V^*]^K$  can be regarded as  subalgebras of 
$\bbk[\g_{10}\oplus\g_{11}]^{G_{00}}$.
Let $\bbk[\g_{10}\oplus\g_{11}]_{(a,b)}$ denote the space of bi-homogeneous polynomials
of degree $a$ with respect to $\g_{10}$ and degree $b$ with respect to $\g_{11}$.
Any homogeneous polynomial $f\in \bbk[\g_{10}\oplus\g_{11}]$ of degree 
$n$ can be decomposed into the sum of bi-homogeneous components 
$f=\sum_{i=k}^m f_i$, where $f_i\in \bbk[\g_{10}\oplus\g_{11}]_{(n-i,i)}$.
Assuming that $f_k, f_m\ne 0$, we set
$f^\bullet:=f_k$ and $f_\bullet:=f_m$. That is, $f^\bullet$ is the bi-homogeneous component 
having the maximal degree w.r.t. $\g_{10}$.
 
\begin{prop}  \label{z2-contra}
Suppose that $f\in \bbk[\g_{1\star}]^{G_{0\star}}$ is homogeneous. Then
\vskip1ex
\centerline{$f^\bullet\in \bbk[\g_{10}{\,\propto\,}\g_{11}]^K$
and 
$f_\bullet\in \bbk[\g_{11}{\,\propto\,}\g_{10}]^K$.}
\end{prop}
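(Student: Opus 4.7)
The plan is to run an Inönü--Wigner-style contraction argument. Since the reductive subgroup $G_{00}\subset G_{0\star}$ preserves each summand $\g_{10}$ and $\g_{11}$ under the adjoint action, the bi-grading on $\bbk[\g_{1\star}]$ is $G_{00}$-stable and every bi-homogeneous component $f_i$ of $f$ automatically inherits $G_{00}$-invariance. In particular, both $f^\bullet$ and $f_\bullet$ are $G_{00}$-invariant; since $K=G_{00}\ltimes N_{01}$, it remains only to establish $N_{01}$-invariance in each case.

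For the first claim, I introduce the one-parameter family of rescalings $\Phi_t\in GL(\g_{1\star})$ defined by $\Phi_t(y_0+y_1)=y_0+ty_1$ for $y_0\in\g_{10}$, $y_1\in\g_{11}$ and $t\in\bbk^\times$. For any $x\in\g_{01}$, the element $\exp(tx)\in G_{0\star}$ fixes $f$ under the adjoint action, so the pulled-back polynomial $F_t:=f\circ\Phi_t$ is invariant under the conjugated operator $A_t(x):=\Phi_t^{-1}\circ\exp(tx)\circ\Phi_t$. Using $[\g_{01},\g_{10}]\subset\g_{11}$ and $[\g_{01},\g_{11}]\subset\g_{10}$, the expansion of $\Ad(\exp(tx))=\exp(t\,\ad x)$ applied to $y_0+ty_1$ and then hit with $\Phi_t^{-1}$ gives a power series in $t$, and I would verify that $A_t(x)$ is polynomial in $t$ with $\lim_{t\to 0}A_t(x)$ equal to the operator $(y_0,y_1)\mapsto(y_0,y_1+[x,y_0])$ --- i.e.\ precisely the action of $\exp(x)\in N_{01}$ on $\g_{10}{\,\propto\,}\g_{11}$ from \eqref{eq:N01:V}. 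On the other hand, writing $f=\sum_{i=k}^{m}f_i$ with $f^\bullet=f_k$, I compute $F_t(y_0,y_1)=\sum_{i=k}^{m}t^{i}f_i(y_0,y_1)$. The equation $F_t=F_t\circ A_t(x)$ is a polynomial identity in $t$; dividing by $t^{k}$ and specialising at $t=0$ yields $f^\bullet=f^\bullet\circ\exp(x)$ for every $x\in\g_{01}$, i.e.\ $N_{01}$-invariance of $f^\bullet$.

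The second claim is handled by the symmetric family $\Phi_t'(y_0+y_1)=ty_0+y_1$: the analogous bookkeeping shows that $\Phi_t'^{-1}\circ\exp(tx)\circ\Phi_t'$ is polynomial in $t$ and converges, as $t\to 0$, to the $N_{01}$-action on $\g_{11}{\,\propto\,}\g_{10}$ from \eqref{eq:N01:V}, while the pullback $f\circ\Phi_t'=\sum_{i}t^{\,n-i}f_i$ has leading contribution $t^{\,n-m}f_\bullet$ as $t\to 0$; extracting this leading coefficient produces the desired invariant. The main substantive step, and the only place where one must be careful, is the limit computation for $A_t(x)$: the cross-terms $[x,y_1]$ in the $\g_{10}$-component and $[x,y_0]$ in the $\g_{11}$-component each pick up compensating factors of $t^{\pm 1}$ from $\Phi_t$, and one must check that these cancellations preserve polynomiality in $t$ and that exactly one term survives in the limit. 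Everything else is formal, and the argument is a direct descendant of the contraction procedure for the (co)adjoint representation developed in \cite{coadj07}.
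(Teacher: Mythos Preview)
Your argument is correct and is the same contraction idea as the paper's proof, carried out at the group level rather than infinitesimally. The paper instead works directly with the derivation $\eus D_x$ on $\bbk[\g_{1\star}]$ associated to $x\in\g_{01}\subset\g_{0\star}$: since $\ad x$ interchanges $\g_{10}$ and $\g_{11}$, there is a splitting $\eus D_x=\eus D_x^+ + \eus D_x^-$ according to bi-degree shift $(a,b)\mapsto(a\pm 1,b\mp 1)$, and from $\eus D_x f=0$ one reads off $\eus D_x^+(f^\bullet)=0$ and $\eus D_x^-(f_\bullet)=0$ at the two extremal bi-degrees; these are exactly the infinitesimal $N_{01}$-invariance conditions for the two $K$-modules. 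Your operators $A_t(x)$ and the paper's $\eus D_x^\pm$ are related in the obvious way: $\eus D_x^+$ is the derivative of $A_t(x)$ at $t=0$.

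One caution about your write-up: the operator $A_t(x)=\Phi_t^{-1}\circ\exp(t\,\ad x)\circ\Phi_t$ is in general only a formal (or, over $\mathbb C$, convergent) power series in $t$, not a polynomial, because $x\in\g_{01}$ need not be $\ad$-nilpotent. This is harmless for the argument---the identity $F_t=F_t\circ A_t(x)$ still holds as an equality of formal power series (it is equivalent to $\eus D_x f=0$), both sides lie in $t^{k}\bbk[[t]][\g_{1\star}]$, and comparing the coefficient of $t^{k}$ yields exactly $f^\bullet=f^\bullet\circ A_0(x)$---but the paper's purely infinitesimal phrasing sidesteps this bookkeeping altogether.
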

\begin{proof}
The proof is similar to that of \cite[Prop.\,3.1]{coadj07}, which was designed
for the adjoint and coadjoint representations of $K$.
For convenience, we repeat it here. Clearly, each bi-homogeneous component $f_i$ of $f$ is
$G_{00}$-invariant. For $x\in\g_{01}$, let $\eus D_x$ denote the corresponding 
derivation of $\bbk[\g_{1\star}]$. 
(Here we regard the space $\g_{1\star}$ as $G_{0\star}$-module, i.e.,
$x$ is an element of the Lie algebra $\g_{0\star}$.)
Since $\ad x(\g_{10})\subset \g_{11}$ and $\ad x(\g_{11})\subset \g_{10}$,
we have 
\[
    \eus D_x \bigl(\bbk[\g_{1\star}]_{(a,b)}\bigr) 
    \subset \bbk[\g_{1\star}]_{(a+1,b-1)}  \oplus \bbk[\g_{1\star}]_{(a-1,b+1)}.
\]
Of course, if $a=0$ or $b=0$, then the summand with index $a-1$ or $b-1$ does not occur
in the right-hand side.
One can write
$\eus D_x=\eus D_x^+ +\eus D_x^-$, where 
\[
\eus D_x^+ \bigl(\bbk[\g_{1\star}]_{(a,b)}\bigr)  \subset \bbk[\g_{1\star}]_{(a+1,b-1)}  \text{  and  } 
\eus D_x^- \bigl(\bbk[\g_{1\star}]_{(a,b)}\bigr)  \subset \bbk[\g_{1\star}]_{(a-1,b+1)} .
\]
Since $\eus D_x(f)=0$, we have $\eus D_x^+(f^\bullet)=0$ and $\eus D_x^-(f_\bullet)=0$.
It also follows from \eqref{eq:N01:V}  that 
$\eus D_x^+$ (resp. $\eus D_x^-$) is the derivation corresponding to $x$, as an element
of $\g_{01}^a\subset \ka$,
in the algebra $
\bbk[\g_{01}{\,\propto\,}\g_{11}]$ 
(resp. $
\bbk[\g_{11}{\,\propto\,}\g_{10}]$).
\end{proof}

\begin{cor}
The algebras\/ $\bbk[V]^K$ and\/  $\bbk[V^*]^K$ are bi-graded.
\end{cor}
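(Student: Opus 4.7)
The plan is to read the bi-gradedness off directly from the shift behaviour of the relevant derivations, as already worked out in the proof of Proposition~\ref{z2-contra}. The action of $K=G_{00}\ltimes N_{01}$ on $V=\g_{10}\propto\g_{11}$ splits into a reductive piece and a unipotent piece, and I will show each piece preserves the bi-grading on $\bbk[\g_{10}\oplus\g_{11}]$, so that the intersection $\bbk[V]^K=\bbk[V]^{G_{00}}\cap\bbk[V]^{N_{01}}$ is bi-graded.

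First, since $G_{00}$ stabilises $\g_{10}$ and $\g_{11}$ as subspaces, its action on $\bbk[\g_{10}\oplus\g_{11}]$ preserves each bi-homogeneous component $\bbk[\g_{10}\oplus\g_{11}]_{(a,b)}$. Hence $\bbk[V]^{G_{00}}$ is automatically bi-graded, and every bi-homogeneous component of a $G_{00}$-invariant is again $G_{00}$-invariant.

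Next, for $x\in\g_{01}$ the derivation $\eus D_x^+$ of $\bbk[\g_{10}\oplus\g_{11}]$ induced by the $N_{01}$-action on $V$ sends $\bbk[\g_{10}\oplus\g_{11}]_{(a,b)}$ into $\bbk[\g_{10}\oplus\g_{11}]_{(a+1,b-1)}$; this is exactly the content of the bi-degree shift identified in Proposition~\ref{z2-contra}. Now if $f\in\bbk[V]^{N_{01}}$ decomposes into bi-homogeneous components $f=\sum_{(a,b)} f_{(a,b)}$, then $\eus D_x^+(f)=\sum_{(a,b)} \eus D_x^+(f_{(a,b)})$ is a sum of elements in \emph{distinct} bi-degrees, so the vanishing of $\eus D_x^+(f)$ forces $\eus D_x^+(f_{(a,b)})=0$ for every $(a,b)$ and every $x$. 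Integrating, each $f_{(a,b)}$ is $N_{01}$-invariant, so $\bbk[V]^{N_{01}}$ is bi-graded. Combining the two observations yields bi-gradedness of $\bbk[V]^K$.

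The same argument applies verbatim to $V^*=\g_{11}\propto\g_{10}$: the $G_{00}$-action still preserves bi-degree, while the $N_{01}$-derivations $\eus D_x^-$ now shift bi-degree by $(-1,+1)$, which is again a uniform shift, and the identical argument gives bi-gradedness of $\bbk[V^*]^K$. There is no genuine obstacle: the whole point is that the unipotent part of $K$ acts by a derivation of uniform bi-degree shift, so that invariance is tested bi-homogeneous component by bi-homogeneous component.
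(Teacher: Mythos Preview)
Your proof is correct and follows essentially the same approach as the paper: both rely on the observation from the proof of Proposition~\ref{z2-contra} that the $N_{01}$-derivations $\eus D_x^+$ shift bi-degree uniformly, so $N_{01}$-invariance (and trivially $G_{00}$-invariance) is inherited by each bi-homogeneous component. The paper phrases this as ``extract $f^\bullet$ and iterate on $f-f^\bullet$'', while you note directly that all bi-homogeneous components are simultaneously annihilated; the underlying argument is identical.
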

\begin{proof} 
The previous proof shows that
if $f\in \bbk[V]^K$, then also $f^\bullet \in \bbk[V]^K$. Then one considers $f-f^\bullet$, etc.
\end{proof}

\begin{rmk}   \label{rmk:bullet}
Set $\gr^\bullet (\bbk[\g_{1\star}]^{G_{0\star}})=\{f^\bullet \mid f\in \bbk[\g_{1\star}]^{G_{0\star}}\}\subset \bbk[V]^K$.
We already know that $\trdeg \bbk(V)^K =\trdeg\bbk(\g_{1\star})^{G_{0\star}}=\dim (\g_{1\star}\md G_{0\star})$ and
$\bbk(V)^K$ is the fraction field of $\bbk[V]^K$. Therefore,
one might expect that $\gr^\bullet (\bbk[\g_{1\star}]^{G_{0\star}})=\bbk[V]^K$ 
 in good cases. Indeed, it often happens  that there is a set of basic invariants
 $f_1,\dots,f_m\in \bbk[\g_{1\star}]^{G_{0\star}}$ such that 
 $f_1^\bullet, \dots,f_m^\bullet$ are algebraically independent.
But, unlike the case of the coadjoint representation of $K$, this does not guarantee that
$f_1^\bullet, \dots,f_m^\bullet$ generate $\bbk[V]^K$ (cf. \cite[Theorem\,4.2]{coadj07}).  
True generators of $\bbk[V]^K$
can have smaller degrees. 
The reason is that, for the $K$-module $V$, the complement of the set of $K$-regular
points may contain a divisor. (By \cite[Theorem\,3.3]{coadj07}, this cannot happen for 
the coadjoint  representation of a $\BZ_2$-contraction.)
\end{rmk}

\section{Invariants of certain degenerated isotropy representations}  
\label{sect5}

\noindent
We continue to work with a quaternionic decomposition~\eqref{eq:quatern_matrix}.
In Section~\ref{sect4}, we considered a good situation  in which a \CSS\ 
$\ce_{10}\subset\g_{10}$ is also a \CSS\ in $\g_{1\star}$. This is equivalent to that 
$\z_\g(\ce_{10})\cap\g_{1\star}=\ce_{10}$. Using Theorem~\ref{thm:no-extra-inv},
one then obtains a  description of $\bbk[\g_{10}{\,\propto\,}\g_{11}]^{N_{01}}$, etc.
In this section, we consider a less restrictive condition that $\ce_{10}$ contains 
$G_{0\star}$-regular elements of $\g_{1\star}$. In other words,
\vskip1ex
\hbox to \textwidth{\enspace \refstepcounter{equation} (\theequation) \label{eq:less-restr}
\hfil 
$\z_\g(\ce_{10})\cap\g_{1\star}$ is a \CSS\ in $\g_{1\star}$.
\hfil
}
\vskip.8ex\noindent
Then $\de:=\z_\g(\ce_{10})\cap\g_{1\star}=\ce_{10}\oplus \ah$ for some 
subspace $\ah \subset\g_{11}$. Recall that the generalised  Weyl group for 
the \CSS\ 
$\de\subset \g_{1\star}$ is 
$\tilde W=N_{0\star}(\de)/Z_{0\star}(\de)$, where
$N_{0\star}(\de)=\{ s\in G_{0\star}\mid s{\cdot}\de\subset \de\}$ and
$Z_{0\star}(\de)=\{ s\in G_{0\star}\mid s{\cdot}x=x \ \forall x\in\de\}$. Set $m=\dim\de$.
\\
Obviously, the involution $\sigma_2$ preserves  $\de$ and the corresponding
eigenspaces are $\ce_{10}$ and $\ah$.
\begin{lm}
$\bar\sigma_2=\sigma_2\vert_{\de}$ normalises the group $\tilde W\subset
GL(\de)$, i.e., $\bar\sigma_2\tilde W\bar\sigma_2^{-1}=\tilde W$.
\end{lm}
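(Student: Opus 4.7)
The plan is to exploit the fact that $\sigma_2$ is a group automorphism of $G$ that commutes with $\sigma_1$, and use functoriality to transport elements of $N_{0\star}(\de)$ via $\sigma_2$.

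First I would observe that since $\sigma_1$ and $\sigma_2$ commute as automorphisms of $G$, the subgroup $G_{0\star}$ (the identity component of $G^{\sigma_1}$) is $\sigma_2$-stable. Moreover, $\de=\ce_{10}\oplus\ah$ with $\ce_{10}\subset\g_{10}$ and $\ah\subset\g_{11}$; since $\sigma_2$ acts as $+1$ on $\g_{10}$ and as $-1$ on $\g_{11}$, the subspace $\de$ is $\sigma_2$-stable, and $\bar\sigma_2:=\sigma_2|_\de$ is itself an involution of $\de$, so $\bar\sigma_2=\bar\sigma_2^{-1}$.

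The key computation is the functoriality identity
\[
\sigma_2(s)\cdot x \;=\; \sigma_2\bigl(s\cdot \sigma_2(x)\bigr)\qquad (s\in G,\ x\in\g),
\]
which follows from $\sigma_2(\Ad(s)y)=\Ad(\sigma_2(s))\sigma_2(y)$. From this I would deduce two things: (a) if $s\in N_{0\star}(\de)$ and $x\in\de$, then $\sigma_2(x)\in\de$, so $s\cdot\sigma_2(x)\in\de$, so $\sigma_2(s)\cdot x\in\sigma_2(\de)=\de$, meaning $\sigma_2(s)\in N_{0\star}(\de)$; and (b) if $s\in Z_{0\star}(\de)$, the same identity gives $\sigma_2(s)\cdot x=\sigma_2(\sigma_2(x))=x$ for all $x\in\de$, so $\sigma_2(s)\in Z_{0\star}(\de)$. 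Hence $\sigma_2$ descends to an automorphism of the quotient $\tilde W=N_{0\star}(\de)/Z_{0\star}(\de)$.

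Finally, I would rewrite the same identity as
\[
\sigma_2(s)|_\de \;=\; \bar\sigma_2\circ s|_\de\circ \bar\sigma_2^{-1}
\]
in $GL(\de)$, which identifies the induced automorphism of $\tilde W$ with conjugation by $\bar\sigma_2$. This yields the inclusion $\bar\sigma_2\tilde W\bar\sigma_2^{-1}\subset\tilde W$, and applying $\bar\sigma_2$ once more (using $\bar\sigma_2^2=\mathsf{id}_\de$) gives the reverse inclusion. The main obstacle is mostly notational: one must carefully distinguish the $\sigma_2$-action on elements of $G$ (lifting the Lie algebra involution) from the action on $\g$, and check that $\sigma_2$ preserves the connected subgroup $G_{0\star}$, not just $G^{\sigma_1}$; but the latter follows at once because $\sigma_2$ is a continuous automorphism.
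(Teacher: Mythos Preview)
Your proposal is correct and follows essentially the same approach as the paper's proof: both use that $\sigma_2$ is an automorphism of $G$ preserving $G_{0\star}$, and that the adjoint action of $\sigma_2(g)$ equals $\sigma_2\circ g\circ\sigma_2^{-1}$, whence $\sigma_2$ maps $N_{0\star}(\de)$ into itself. You have simply spelled out more of the details (stability of $Z_{0\star}(\de)$, the reverse inclusion via $\bar\sigma_2^2=\mathsf{id}$) that the paper leaves implicit.
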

\begin{proof}
If $g\in G_{0\star}$, then $\sigma_2(g)\in G_{0\star}$ as well, and 
$\sigma_2(g)$ acts on $\g$ as composition $\sigma_2\circ g\circ \sigma_2^{-1}$.
Therefore, if  $g\in N_{0\star}(\de)$, then also $\sigma_2(g)\in N_{0\star}(\de)$.
\end{proof}

Set $X=\ov{G_{0\star}{\cdot}\ce_{10}}\subset \g_{1\star}$.  Since $\ov{G_{00}{\cdot}\ce_{10}}=\g_{10}$, we may as well define $X$ as $\ov{G_{0\star}{\cdot}\g_{10}}$.

\begin{thm}  \label{thm:ci}
If condition~\eqref{eq:less-restr}  is satisfied,
then  $X$ is an irreducible 
complete intersection in $\g_{1\star}$ and the ideal of 
$X$ in $\bbk[\g_{1\star}]$, $\eus I(X)$, is generated by some basic invariants in\/ $\bbk[\g_{1\star}]^{G_{0\star}}$.
\end{thm}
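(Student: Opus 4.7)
The plan is to show that $X$ is a complete intersection of codimension $\dim\ah$ in $\g_{1\star}$, cut out by $\dim\ah$ basic invariants, in three steps.

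First, I would compute $\dim X$ and identify $X$ with a preimage. By \eqref{eq:less-restr}, a generic $\xi\in\ce_{10}$ is $G_{0\star}$-regular semisimple in $\g_{1\star}$, so $\dim G_{0\star}\cdot\xi=\dim\g_{1\star}-\dim\de$. The orbit map $G_{0\star}\times\ce_{10}\to\g_{1\star}$ then has generic fibre of dimension $\dim G_{0\star}-(\dim\g_{1\star}-\dim\de)$ (the set of $h\in G_{0\star}$ with $h\xi\in\ce_{10}$ is a finite union of cosets of $\mathrm{Stab}_{G_{0\star}}(\xi)$, one for each element of the finite set $\tilde W\cdot\xi\cap\ce_{10}$), giving $\dim X=\dim\g_{1\star}-\dim\de+\dim\ce_{10}=\dim\g_{1\star}-\dim\ah$. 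Let $\pi\colon\g_{1\star}\to\g_{1\star}\md G_{0\star}$ be the quotient morphism, identified by Chevalley's restriction theorem with the finite quotient $\de\to\de/\tilde W$. The image $\pi(\ce_{10})$ is closed irreducible of dimension $\dim\ce_{10}$; the fibres of $\pi$ are equidimensional of dimension $\dim\g_{1\star}-\dim\de$, and the generic fibres over $\pi(\ce_{10})$ are single closed $G_{0\star}$-orbits (by the semisimplicity of generic $\xi$). Hence $\pi^{-1}(\pi(\ce_{10}))$ is irreducible of the same dimension as $X$, giving $X=\pi^{-1}(\pi(\ce_{10}))$.

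Next, I would produce $\dim\ah$ basic invariants cutting out $\pi(\ce_{10})$. Via Chevalley, $\bbk[\g_{1\star}]^{G_{0\star}}\cong\bbk[\de]^{\tilde W}$ is a polynomial ring in $r=\dim\de$ homogeneous generators, and since $\bar\sigma_2:=\sigma_2|_\de$ normalises $\tilde W$, it acts on this ring by graded algebra automorphisms. Choosing minimal homogeneous generators as $\bar\sigma_2$-eigenvectors yields $\bbk[\de]^{\tilde W}=\bbk[f_1,\ldots,f_{r-s},g_1,\ldots,g_s]$ with $\bar\sigma_2(f_i)=f_i$ and $\bar\sigma_2(g_j)=-g_j$. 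Each anti-invariant $g_j$ vanishes on $\ce_{10}=\de^{\bar\sigma_2}$, so $\langle g_1,\ldots,g_s\rangle\subseteq\eus I(\pi(\ce_{10}))$, and the containment $\pi(\ce_{10})\subseteq V(g_1,\ldots,g_s)$ forces $\dim\ce_{10}\le r-s$, i.e., $s\le\dim\ah$.

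The main obstacle is the reverse inequality $s\ge\dim\ah$, equivalently, that the fixed-point subscheme of $\bar\sigma_2$ on $\de/\tilde W$ coincides with $\pi(\ce_{10})$; this says that every $\bar\sigma_2$-stable $\tilde W$-orbit in $\de$ meets $\ce_{10}$, or equivalently, that every twisted involution $w\in\tilde W$ (satisfying $\bar\sigma_2 w\bar\sigma_2=w^{-1}$) can be written as $v^{-1}\bar\sigma_2(v)\bar\sigma_2^{-1}$ for some $v\in\tilde W$. This is the triviality of a non-abelian $\langle\bar\sigma_2\rangle$-cocycle on $\tilde W$, which I expect to follow from the structure theory of twisted involutions in finite reflection groups (a reflection-group analogue of Lang--Steinberg). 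Once established, $\eus I(\pi(\ce_{10}))=\langle g_1,\ldots,g_{\dim\ah}\rangle$, and the freeness of $\bbk[\g_{1\star}]$ as a $\bbk[\g_{1\star}]^{G_{0\star}}$-module (Chevalley) then gives $\eus I(X)=\langle g_1,\ldots,g_{\dim\ah}\rangle\cdot\bbk[\g_{1\star}]$, a complete intersection generated by basic invariants.
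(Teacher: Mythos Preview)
Your overall architecture matches the paper's: compute $\dim X$, identify $X$ with $\pi^{-1}(\pi(\ce_{10}))$, and cut out $\pi(\ce_{10})$ by the $\bar\sigma_2$-anti-invariant basic generators $g_1,\dots,g_s$. The divergence is entirely in how you obtain $s=\dim\ah$.

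The paper does \emph{not} go through a cocycle vanishing. It invokes Springer's theory of regular elements of finite reflection groups \cite[6.3--6.5]{sp74}: since $\bar\sigma_2$ normalises $\tilde W$ and has a \emph{regular} eigenvector in its $(+1)$-eigenspace $\ce_{10}$ (this is exactly hypothesis~\eqref{eq:less-restr}), the multiset of eigenvalues of $\bar\sigma_2$ on $\de$ coincides with the multiset of its eigenvalues on a suitably chosen basis of $\bbk[\de]^{\tilde W}$. Hence the number of anti-invariant basic generators equals $\dim\ah$ on the nose, and moreover the restrictions of the $+1$-eigenvector generators freely generate $\bbk[\ce_{10}]^{W_{10}}$, where $W_{10}=Z_{\tilde W}(\bar\sigma_2)$. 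Your route, by contrast, is incomplete: you only claim the cocycle triviality ``ought to'' hold, and your formulation does not invoke \eqref{eq:less-restr} at all. But the statement is simply false without it. For a concrete obstruction, take $\tilde W=S_3$ acting on its reflection representation and let $\bar\sigma_2$ be a transposition in $\tilde W$; then $\bar\sigma_2$ acts trivially on $\bbk[\de]^{\tilde W}$, so $s=0$, while $\dim\ah=1$. Of course here $\ce_{10}$ contains no $\tilde W$-regular vector, so \eqref{eq:less-restr} fails---which is the point: Springer's theorem is precisely the mechanism by which the regularity hypothesis enters.

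There is also a smaller gap at the end. Freeness of $\bbk[\g_{1\star}]$ over $\bbk[\g_{1\star}]^{G_{0\star}}$ tells you that $(g_1,\dots,g_s)\bbk[\g_{1\star}]$ is the ideal of the \emph{scheme-theoretic} fibre product, but you still need this ideal to be prime (equivalently, radical, since you have irreducibility). The paper handles this via Kostant's criterion \cite[Lemma~4]{ko63}: the differentials $(\textsl{d}F_1)_x,\dots,(\textsl{d}F_k)_x$ are linearly independent at any $G_{0\star}$-regular semisimple $x\in\ce_{10}$, so the complete intersection is generically smooth, hence reduced. You should add this step (or, alternatively, argue that the fibres of $\pi$ are reduced and appeal to flatness).
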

\begin{proof}
Clearly, $X$ is irreducible.  Since $\ce_{10}$ contains $G_{0\star}$-regular elements
of $\g_{1\star}$ and $\ov{G_{0\star}{\cdot}\de}=\g_{1\star}$,
we have $\dim \g_{1\star}-\dim X=\dim{\de}-\dim\ce_{10}=\dim\ah$.

We know that $\bar\sigma_2\in GL(\de)$ is of finite order, normalises $\tilde W$, and
has regular eigenvectors (in $\ce_{10}$).
By \cite[6.3--6.5]{sp74}, this implies the following:

{\bf -- } \ The centraliser $W_{10}$ of $\bar\sigma_2$ in $\tilde W$ is reflection group 
in the $\bar\sigma_2$-eigenspace $\ce_{10}$.

{\bf -- } \ $\bbk[\ce_{10}]^{W_{10}}$ is generated by the restrictions to $\ce_{10}$ of some 
basic invariants in $\bbk[\de]^{\tilde W}$. Namely, a set of basic invariants
$\{f_1,\dots,f_m\}$ can be chosen such that each $f_i$ is a $\bar\sigma_2$-eigenvector,
say $\bar\sigma_2(f_1)=\esi_i f_i$, where $\esi_i\in\{1,-1\}$. Then 
$\bbk[\ce_{10}]^{W_{10}}$ is freely generated by $f_1\vert_{\ce_{10}}$ such that 
$\esi_1=1$. In particular, the restriction homomorphism
$\bbk[\de]^{\tilde W}\to \bbk[\ce_{10}]^{W_{10}}$  is onto.

{\bf -- } \ The eigenvalues of $\bar\sigma_2$ in $\de $ are $\esi_1,\dots,\esi_m$.

\noindent 
It follows that there is a set of basic invariants $\{f_1,\dots,f_m\}$ in $\bbk[\de]^{\tilde W}$
such that $f_{1},\dots, f_k$ vanish on $\ce_{10}$, whereas the restrictions of 
$f_{k+1},\dots, f_m$ to $\ce_{10}$ freely generate $\bbk[\ce_{10}]^{W_{10}}$. Moreover, here
$m-k=\dim\ce_{10}$ and $k=\dim\ah$.

Since $\bbk[\g_{1\star}]^{G_{0\star}}\simeq\bbk[\de]^{\tilde W}$,  there
are also $k$ basic invariants in $\bbk[\g_{1\star}]^{G_{0\star}}$ that vanish on $X$. 
Let $F_i$ denote the $G_{0\star}$-invariant corresponding to $f_i$.
Then $F_1,\dots,F_m$ is a regular sequence in $\bbk[\g_{1\star}]$ and
$\eus V(F_{1},\dots,F_k)$ is an unmixed variety of codimension $\dim\ah$.
Furthermore, $X\subset \eus V(F_{1},\dots,F_k)$ and $\codim X=k$.
Hence $X$ is an irreducible component of $\eus V(F_{1},\dots,F_k)$.
Our goal is to prove that the ideal $(F_{1},\dots,F_k)$ is prime.
The first step is prove that $\eus V(F_{1},\dots,F_k)$ is irreducible.

Recall that the morphism 
$\pi: \g_{1\star}\to \g_{1\star}\md G_{0\star}$ is equidimensional, and 
a generic fibre of $\pi$ is irreducible, since it is a (closed) $G_{0\star}$-orbit.
If $Z$ is a $G_{0\star}$-stable closed subset of $\g_{1\star}$, then 
$\pi(Z)$ is also closed \cite[4.4]{t55}.
The previous discussion on invariants of $\tilde W$ and $W_{10}$ shows that
$\pi(\eus V(F_{1},\dots,F_k))=
\pi(X)=\pi(\ce_{10})$ is an affine space of dimension $m-k$. 

Let $Y$ be an irreducible component of  $\eus V(F_{1},\dots,F_k)$.
Since $\pi$ is equidimensional, $\dim X=\dim Y$, and $\pi(Y)\subset
\pi(\eus V(F_{1},\dots,F_k))=\pi(X)$, 
we  must have $\pi(X)=\pi(Y)$. Moreover, for
generic $\eta=\pi(x)\in \pi(Y)$ ($x\in\ce_{10}$), we have $\pi^{-1}(\eta)=
G_{0\star}{\cdot}x\subset X$. Thus, $Y=X$ and $X=\eus V(F_{1},\dots,F_k)$ is irreducible.

By \cite[Lemma\,4]{ko63}, the primeness of 
the ideal $(F_{1},\dots,F_k)$ will follow from the fact that there is a point
$y\in X=\eus V(F_{1},\dots,F_k)$ such that the differentials
$\textsl{d}F_{1},\dots,\textsl{d}F_{k}$ are linearly independent at $y$.
Write $(\textsl{d}F)_y$ for the value of $\textsl{d}F$ at $y$.
By assumption, $X$ contains a $G_{0\star}$-regular semisimple element $x$, and it is known 
that even $(\textsl{d}F_{1})_x,\dots,(\textsl{d}F_{m})_x$ are linearly independent \cite{kr71}.
\end{proof}

\begin{rmk}
Using the fact that 
$(\textsl{d}F_{1})_x,\dots,(\textsl{d}F_{k})_x$ are linearly independent for any 
$G_{0\star}$-regular point of $x\in X$, one easily proves that  $X$ is non-singular in codimension 2 and therefore
$X$ is normal. (A similar argument in the context of adjoint representations is found in
\cite[\S\,5]{ri87}.)
\end{rmk}
Keep the previous notation, i.e.,   $m=\rk(G/G_{0\star})$,
$X=\ov{G_{0\star}{\cdot}\ce_{10}}$, and  $F_1,\dots,F_m \in 
\bbk[\g_{1\star}]^{G_{0\star}}$ are the
basic invariants such that
$\eus I(X)=(F_{1},\dots,F_k)$ and  $\bbk[X]^{G_{0\star}}=\bbk[F_{k+1},\dots,F_m]$.
Recall that $m-k=\dim\ce_{10}$ and $k=\dim\ah$.
We wish to describe the invariants  of the isotropy
representation $(K:V=\g_{10}{\,\propto\,}\g_{11})$, which is a contraction of 
$(G_{0\star}:\g_{1\star})$. 

As in Section~\ref{sect4}, we consider  bi-homogeneous components of $F_i$ with respect 
to the sum $\g_{1\star}=\g_{10}\oplus\g_{11}$. Recall that $F_i^\bullet$ stands for
the bi-homogeneous component of maximal degree with respect to $\g_{10}$, and 
$F_i^\bullet\in \bbk[\g_{10}{\,\propto\,}\g_{11}]^{G_{00}\ltimes N_{01}}$
by Prop.~\ref{z2-contra}.

\begin{thm}   \label{thm:main5}
Suppose that condition~\eqref{eq:less-restr}  is satisfied.
\\
{\sf (i)} \  The field\/ $\bbk(\g_{10}{\,\propto\,}\g_{11})^{N_{01}}$ is  
generated  by the  coordinate functions
on $\g_{10}$ and the polynomials $F_i^\bullet$, $i=1,\dots,k$.
\\
{\sf (ii)} \ Suppose that 
$(\textsl{d}F_{1})_e,\dots,(\textsl{d}F_{k})_e$ are linearly independent for
a $G_{00}$-regular nilpotent element $e\in\g_{10}$.  Then 

{\bf --} \ the algebra  $\bbk[\g_{10}{\,\propto\,}\g_{11}]^{N_{01}}$ is freely generated by the  
coordinate functions on $\g_{10}$ and the polynomials $F_i^\bullet$, $i=1,\dots,k$;

{\bf --} \  the algebra $\bbk[\g_{10}{\,\propto\,}\g_{11}]^{G_{00}\ltimes N_{01}}$ is freely generated by 
the basic invariants in $\bbk[\g_{10}]^{G_{00}}$ and the polynomials $F_i^\bullet$, $i=1,\dots,k$.
\end{thm}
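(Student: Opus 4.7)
The plan for both parts is to analyse the $N_{01}$-invariant morphism
\[
  \Psi: V\longrightarrow \g_{10}\times\bbk^k,\qquad (y_0,y_1)\mapsto \bigl(y_0,\,F_1^\bullet(y_0,y_1),\ldots,F_k^\bullet(y_0,y_1)\bigr),
\]
whose components are $K$-invariants by Proposition~\ref{z2-contra}. By Lemma~\ref{lm:raspred-equal} together with~\eqref{eq:less-restr}, for a generic $y_0\in\g_{10}$ one has $\dim[y_0,\g_{01}]=\dim\g_{11}-k$, so generic $N_{01}$-orbits in $V$ have dimension $\dim\g_{11}-k$ and $\trdeg\bbk(V)^{N_{01}}=\dim\g_{10}+k=\dim(\g_{10}\times\bbk^k)$. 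The overarching aim is to show that $\Psi$ is dominant with generic fibres equal to $N_{01}$-orbits, after which the Igusa Lemma (cf.\ \cite[Lemma~6.1]{rims07}) yields the conclusion in each part.

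For (ii), since $F_i$ vanishes on $X\supseteq\g_{10}$ by Theorem~\ref{thm:ci}, the bi-homogeneous component $F_{i,0}$ is zero, whence $k_i\ge 1$. A direct expansion gives $(\textsl{d}F_i)_e(y_1)=F_{i,1}(e,y_1)$ for $e\in\g_{10}$, so the linear-independence hypothesis forces $k_i=1$, and $F_i^\bullet(y_0,y_1)=\ell_i(y_0)(y_1)$ is linear in $y_1$ with $\ell_1(e),\ldots,\ell_k(e)$ linearly independent in $\g_{11}^*$. Semicontinuity extends this independence to a dense open $U\subseteq\g_{10}$. Since $N_{01}$-invariance forces $[y_0,\g_{01}]\subseteq\bigcap_i\ker\ell_i(y_0)$ and both subspaces have dimension $\dim\g_{11}-k$ for $y_0\in U$, they coincide. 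Hence generic fibres of $\Psi$ are irreducible affine subspaces that equal $N_{01}$-orbits, and the Igusa Lemma yields
\[
  \bbk[V]^{N_{01}}=\Psi^*\bbk[\g_{10}\times\bbk^k]=\bbk[\g_{10}][F_1^\bullet,\ldots,F_k^\bullet],
\]
a free polynomial algebra by dominance of $\Psi$. Passing to $G_{00}$-invariants, and using that $G_{00}$ acts on $\bbk[\g_{10}]$ in the standard fashion and fixes each $K$-invariant $F_i^\bullet$, gives the second bullet via Chevalley's theorem for the isotropy representation $(G_{00}{:}\g_{10})$.

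For (i), the same $\Psi$ is to be studied without the reduction $k_i=1$. A preliminary step is to verify that $F_i^\bullet|_\de\neq 0$: otherwise, $N_{01}$-invariance and the decomposition $\g_{11}=[\xi,\g_{01}]\oplus\ah$ for generic $\xi\in\ce_{10}$ would give $F_i^\bullet|_{\ce_{10}\times\g_{11}}=0$, and then $G_{00}$-invariance combined with density of $G_{00}\cdot\ce_{10}$ in $\g_{10}$ would force $F_i^\bullet\equiv 0$, contradicting $F_i^\bullet\neq 0$. Consequently, $F_i^\bullet|_\de$ equals the initial form $f_{i,k_i}$ of $f_i=F_i|_\de$ with respect to the $\ah$-degree filtration on $\bbk[\de]$, and a standard Gr\"obner-style argument transfers the algebraic independence of $f_1,\ldots,f_k$ in $\bbk[\de]^{\tilde W}$ to their initial forms, giving dominance of $\Psi$.

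The main obstacle for (i) is then to show that the generic fibre of $\Psi$ consists of a single $N_{01}$-orbit---equivalently, that the induced map $\ah\to\bbk^k$, $h\mapsto(F_i^\bullet(\xi,h))_i$, is birational for generic $\xi\in\ce_{10}$. I plan to deduce this from Springer's theory applied to the pair $(\tilde W,\bar\sigma_2)$ acting on $\de$, already invoked in the proof of Theorem~\ref{thm:ci} (cf.\ \cite[6.3--6.5]{sp74}): the $\bar\sigma_2$-anti-invariants $f_1,\ldots,f_k$ form a well-controlled module over the $\bar\sigma_2$-invariant subalgebra, and their interplay with the little Weyl group $W_{10}=\tilde W^{\bar\sigma_2}$ constrains the initial-form map on $\ah$ to have degree one. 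Once birationality is established, the field version of the Igusa Lemma produces $\bbk(V)^{N_{01}}=\bbk(\g_{10})(F_1^\bullet,\ldots,F_k^\bullet)$, completing part~(i).
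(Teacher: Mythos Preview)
Your overall strategy via the morphism $\Psi$ is exactly that of the paper, but you have missed the key observation that makes part~(i) work: condition~\eqref{eq:less-restr} alone already forces $k_i=1$ for every $i\le k$. Indeed, \eqref{eq:less-restr} says precisely that $\g_{10}$ contains $G_{0\star}$-regular semisimple elements $s$, and at any such $s$ the differentials $(\textsl{d}F_1)_s,\ldots,(\textsl{d}F_m)_s$ are linearly independent by Kostant--Rallis. Your own computation $(\textsl{d}F_i)_s(y_1)=F_{i,1}(s,y_1)$ (valid for any $s\in\g_{10}$, not just nilpotent ones) then gives $F_{i,1}\ne 0$, hence $F_i^\bullet=F_{i,1}$ is linear in $y_1$. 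This entirely replaces your proposed Springer-theory detour: once $\Psi$ is affine-linear along $\g_{11}$, the induced map $\ah\to\bbk^k$ at a generic $\xi\in\ce_{10}$ is a linear isomorphism (its rows are the independent $(\textsl{d}F_i)_\xi$), generic fibres of $\Psi$ are single $N_{01}$-orbits, and the field statement follows from the Rosenlicht-type criterion (e.g.\ \cite[Lemma~2.1]{t55}) without any birationality subtleties. Your Gr\"obner/initial-form plan is not only unnecessary but also not clearly workable as stated.

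For part~(ii) your argument is correct in outline but incomplete at the point where you invoke the Igusa lemma. Semicontinuity gives a dense open $U\subset\g_{10}$ on which $\ell_1,\ldots,\ell_k$ are independent, but the Igusa lemma requires that $\Ima\Psi$ contain an open set whose complement has codimension $\ge 2$, i.e.\ that $\codim(\g_{10}\setminus U)\ge 2$. This does not follow from $e\in U$ alone. The paper bridges this gap by a deformation argument: independence at the $G_{00}$-regular nilpotent $e$ propagates to the entire open set $\widetilde\Omega$ of $G_{00}$-regular elements of $\g_{10}$, and it is $\widetilde\Omega$ (not an arbitrary $U$) whose complement is known to have codimension $\ge 2$ by Kostant--Rallis. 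You need to supply this step, or an equivalent codimension estimate, before the Igusa lemma can be applied.
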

\begin{proof}
We are only interested in basic invariants $F_i$ with $i\le k$. Let $\deg F_i=n_i$. 
Since $F_i$ vanishes on $X$ and $\g_{10}=\ov{G_{00}{\cdot}\ce_{10}}\subset X$, the 
bi-homogeneous component of degree $(n_i,0)$ is trivial. 

By~\eqref{eq:less-restr}, $\g_{10}$ contains $G_{0\star}$-regular semisimple elements.
If $s\in\g_{10}$ is such an element, then $(\textsl{d}F_{1})_s,\dots,(\textsl{d}F_{k})_s$ are linearly independent.  The fact that $(\textsl{d}F_{i})_s$ is nonzero for some $s\in \g_{10}$
implies that each $F_i$ has a bi-homogeneous
component of degree $(n_i-1,1)$, which thereby is equal to  $F_i^\bullet$.
Since this bi-homogeneous component is linear with respect to $\g_{11}$, it
can be written as
\beq   \label{eq:F-bullet}
    F_i^\bullet(y_0,y_1)= \kappa( \cF_i(y_0),y_1) ,
\eeq
where $\cF_i :  \g_{01}\to \g_{11}$ is a polynomial mapping of degree $n_i-1$.
Then 
\[
(\textsl{d}F_{i})_{y_0}=\cF_i(y_0) .
\]
As $F_i^\bullet$ is $G_{00}$-invariant, the mapping $\cF_i$ must be $G_{00}$-equivariant.
It follows from Equations~\eqref{eq:F-bullet} and \eqref{eq:N01:V} that
the $N_{01}$-invariance of $\cF^\bullet_i$ is equivalent to that
$\cF_i(y_0)$ commutes with $y_0$. 
Recall that $\g_{11}$ is a $K$-stable subspace of $V=\g_{10}{\,\propto\,}\g_{11}$
and the induced action of $N_{01}$ on $V/\g_{11}\simeq \g_{10}$ is trivial.
Consider the polynomial mapping 
\[
  \psi:\g_{10}{\,\propto\,}\g_{11} \to (V/\g_{11})\times \bbk^{k}\simeq \g_{10} \times \bbk^{k},
\]
defined by 
\[
 \psi(y_0,y_1)=(y_0, F^\bullet _{1}(y_0,y_1),\dots,F^\bullet _k(y_0,y_1))=
                        \bigl(y_0, \kappa(\cF_1(y_0),y_1),\dots,\kappa(\cF_k(y_0),y_1)\bigr) .
\]
Clearly the $N_{01}$-action on $(V/\g_{11})\times \bbk^{k}$ is trivial and
$\psi$ is $N_{01}$-equivariant. Our ultimate goal is to prove that $\psi$ is the quotient
 by $N_{01}$. But we can perform it  under the additional hypothesis in part (ii).
 Without extra hypotheses, we can only get the result on the field of $N_{01}$-invariants.

(i) \ Let $\Omega_s\subset \g_{10}$ be the open subset of semisimple elements that are both
$G_{0\star}$-regular and $G_{00}$-regular.
Condition~\eqref{eq:less-restr} means that $\Omega_s\ne\varnothing$. 
Since the vectors $\cF_1(\zeta),\dots,\cF_k(\zeta)$ are linearly independent
for all $\zeta\in\Omega_s$, we have $\Omega_s\times \bbk^{k} \subset \Ima\psi$. Hence $\psi$ 
is dominant. For $(y_0,z_{1},\dots,z_k)\in \g_{10}\times \bbk^{k}$, the fibre 
\[
   \psi^{-1}(y_0,z_{1},\dots,z_k)=\{ (y_0, y_1)\mid \kappa(\cF_j(y_0), y_1)=z_j, \
   1\le j\le k \} 
\]
is an $N_{01}$-stable affine subspace of $\g_{10}{\,\propto\,}\g_{11}$.
Furthermore, if  $y_0\in\Omega_s$, then
\[
 \dim \psi^{-1}(y_0,z_{1},\dots,z_k)=\dim\g_{11}- k . 
\]
On the other hand,
if $(y_0,y_1)\in \psi^{-1}(y_0,z_{1},\dots,z_k)$, then 
\[
 N_{01}{\cdot}(y_0,y_1)=
(y_0, y_1+ [\g_{01},y_0])\subset \psi^{-1}(y_0,z_{1},\dots,z_k)
\]
and $\dim [\g_{01},y_0]=\dim [\g_{11},y_0]=\dim\g_{11}-\dim \z_\g(y_0)_{11}$.
If $y_0\in\Omega_s$, then $\z_\g(y_0)_{10}$ is a \CSS\ in $\g_{10}$ and
$\z_\g(y_0)_{1\star}$ is a \CSS\ in $\g_{1\star}$. 
Hence $\dim \z_\g(y_0)_{11}=\dim\ah=k$ and
\[
\dim N_{01}{\cdot}(y_0,y_1)=\dim \g_{11}-k=\dim \psi^{-1}(y_0,z_{1},\dots,z_k) .
\]
As the orbits of a unipotent group on affine varieties are closed 
\cite[Theorem\,2]{ros61}
and isomorphic to affine spaces, we obtain
$\psi^{-1}(y_0,z_{1},\dots,z_k)=N_{01}{\cdot}(y_0,y_1)$ for $y_0\in\Omega_s$, 
i.e., almost all fibres of $\psi$ are just $N_{01}$-orbits.

By \cite[Lemma\,2.1]{t55}, this means that  the coordinates on 
$\g_{10}$ and $F_1^\bullet,\dots,F_k^\bullet$ generate the field 
$\bbk(\g_{10}{\,\propto\,}\g_{11})^{N_{01}}$.

(ii) \ 
We wish is to apply the Igusa lemma \cite[Lemma\,6.1]{rims07} to $\psi$. 
As the target variety $\g_{10}\times\bbk^k$ is normal and generic fibres of $\psi$ are $N_{01}$-orbits, it remains to check that $\Ima\psi$ contains an open subset of 
$\g_{10}\times\bbk^k$ whose complement is of codimension at least $2$.

Let $\widetilde\Omega\subset\g_{10}$ be the open set of all $G_{00}$-regular elements 
(i.e., not only semisimple ones!).
It follows from \cite{kr71} that  $\codim(\g_{10}\setminus\widetilde\Omega)\ge 2$. 
Hence it would be sufficient 
to have that $\widetilde\Omega\times\bbk^k \subset\Ima\psi$.
This, in turn, would follow from the fact that 
\vskip.7ex
\hbox to \textwidth{\enspace ($\Diamond$) \hfil
the differentials 
$(\textsl{d}F_{1})_\eta,\dots,(\textsl{d}F_{k})_\eta$ are linearly independent for all
$\eta\in\widetilde\Omega$. \hfil }

An obstacle is that if $\eta\in\widetilde\Omega$, then $\eta$ is not necessarily
$G_{0\star}$-regular. Therefore, we need the assumption that
$(\textsl{d}F_{1})_e,\dots,(\textsl{d}F_{k})_e$ are linearly independent
for a nilpotent $e\in\widetilde\Omega$. 
Then a standard deformation argument guarantees us condition ($\Diamond$).

Thus, the assumptions of the Igusa lemma are satisfied and the assertion on the algebra 
of $N_{01}$-invariants follows.
The  assertion on $G_{00}\ltimes N_{01}$-invariants 
stems form the fact that the $\cF^\bullet_i$'s are already $G_{00}$-invariant.
\end{proof}

\begin{rmk} 
1) It might be true that \eqref{eq:less-restr} already implies that $(\textsl{d}F_{1})_e,\dots,(\textsl{d}F_{k})_e$ are linearly independent
for a $G_{00}$-regular nilpotent $e\in\g_{01}$. Such a phenomenon is observed in several examples. But we unable to either prove or disprove this as yet.

2) Although the algebra $\bbk[\g_{10}{\,\propto\,}\g_{11}]^{N_{01}}$ is polynomial in
Theorem~\ref{thm:main5}(ii), the quotient morphism 
\[
  \pi_{N_{01}}: \g_{10}{\,\propto\,}\g_{11} \to (\g_{10}{\,\propto\,}\g_{11})\md N_{01}\simeq \g_{10}\times \bbk^{k}
\]
is not equidimensional (unless $k=0$, i.e., we are in the situation of Theorem~\ref{thm:no-extra-inv}). Yet it might be true that the quotient morphism by $K=G_{00}\ltimes N_{01}$
\[
  \pi_K: \g_{10}{\,\propto\,}\g_{11} \to (\g_{10}{\,\propto\,}\g_{11})\md (G_{00}\ltimes N_{01})
  \simeq (\g_{10}\md G_{00})\times\bbk^{k} \simeq \bbk^m
\]
is equidimensional. However, our methods for verifying equidimensionality require
an information on the dimension of the linear span of 
$\{(\textsl{d}F_{1})_v,\dots,(\textsl{d}F_{k})_v\}$
for nilpotent elements of $v\in\g_{10}$, which is difficult to infer in general  
(cf. proof of \cite[Theorem\,5.3]{coadj07}).
\end{rmk} 

\noindent
There is a sufficient condition that guarantees that Theorem~\ref{thm:main5}
applies in full strength.

\begin{prop}  \label{prop:dostat-usl}
Let $e\in\g_{10}$ be a $G_{00}$-regular nilpotent element. If $e$ is also $G_{0\star}$-regular
as an element of $\g_{1\star}$, then then all the assumptions of Theorem~\ref{thm:main5}
are satisfied, and hence both $\bbk[\g_{10}{\,\propto\,}\g_{11}]^{N_{01}}$ and
$\bbk[\g_{10}{\,\propto\,}\g_{11}]^{G_{00}\ltimes N_{01}}$ are polynomial algebras.
\end{prop}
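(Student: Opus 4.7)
My plan is to verify the two hypotheses of Theorem~\ref{thm:main5}(ii)---condition \eqref{eq:less-restr} and the linear independence of $(\textsl{d}F_1)_e,\dots,(\textsl{d}F_k)_e$ at a $G_{00}$-regular nilpotent $e$---directly from the joint regularity assumption on the given $e$, and then invoke that theorem. For~\eqref{eq:less-restr} I carry out a routine intersection-of-open-sets argument inside $\g_{10}$. The $G_{0\star}$-regular locus $U\subset\g_{1\star}$ is Zariski open and dense, and since $U\cap\g_{10}$ contains $e$, it is open and dense in $\g_{10}$. On the other hand, the set $W\subset\g_{10}$ of $G_{00}$-regular semisimple elements is open and dense (being the $G_{00}$-saturation of the Weyl-regular part of any \CSS\ of $\g_{10}$, whose points sit in closed orbits and therefore form full fibres of $\g_{10}\md G_{00}$). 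Any $s\in W\cap U$ is simultaneously $G_{00}$-regular semisimple in $\g_{10}$ and $G_{0\star}$-regular in $\g_{1\star}$. Letting $\ce_{10}:=\z_\g(s)\cap\g_{10}$ and $\ce_{1\star}:=\z_\g(s)\cap\g_{1\star}$, the characterisation~\eqref{char-prop} identifies these as \CSS\ of $\g_{10}$ and $\g_{1\star}$, respectively, and the inclusion $\ce_{10}\subseteq\ce_{1\star}$ gives
\[
\z_\g(\ce_{10})\cap\g_{1\star} \subseteq \z_\g(s)\cap\g_{1\star} = \ce_{1\star} \subseteq \z_\g(\ce_{10})\cap\g_{1\star}.
\]
Thus $\z_\g(\ce_{10})\cap\g_{1\star}=\ce_{1\star}$ is a \CSS\ of $\g_{1\star}$, which is precisely~\eqref{eq:less-restr}.

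For the differentials I appeal to the Kostant--Rallis description of the regular set \cite{kr71} for the symmetric pair $(\g,\g_{0\star})$: a point $x\in\g_{1\star}$ is $G_{0\star}$-regular if and only if $(\textsl{d}F_1)_x,\dots,(\textsl{d}F_m)_x$ are linearly independent, and this characterisation is valid at every regular point, nilpotent or otherwise. Since $e$ is $G_{0\star}$-regular by assumption, the full collection of $m$ differentials is already independent at $e$, so the required subset $(\textsl{d}F_1)_e,\dots,(\textsl{d}F_k)_e$ inherits independence. With both hypotheses verified, Theorem~\ref{thm:main5}(ii) supplies the asserted polynomiality of $\bbk[\g_{10}{\,\propto\,}\g_{11}]^{N_{01}}$ and $\bbk[\g_{10}{\,\propto\,}\g_{11}]^{G_{00}\ltimes N_{01}}$.

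The only nontrivial ingredient is the Kostant--Rallis statement at a nilpotent regular point, which is classical; the rest reduces to elementary density arguments and~\eqref{char-prop}, so I do not foresee a serious obstacle. The one place that deserves attention is ensuring that the two types of regularity (with respect to $G_{00}$ in $\g_{10}$ and with respect to $G_{0\star}$ in $\g_{1\star}$) actually meet on a common semisimple element of $\g_{10}$---this is exactly what the intersection $W\cap U$ provides.
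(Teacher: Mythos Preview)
Your argument is correct. The verification of the differential condition is identical to the paper's, and your density argument for~\eqref{eq:less-restr} is valid: once $U\cap\g_{10}$ is nonempty open in the irreducible variety $\g_{10}$, its intersection with the open dense set $W$ of $G_{00}$-regular semisimple elements produces the needed $s$, and your chain of inclusions via~\eqref{char-prop} is clean.

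The paper reaches the same conclusion by a more constructive route. Instead of intersecting open sets, it takes a normal $\tri$-triple $\{e,h,f\}$ for the little pair $(\g_{\star 0},\g_{00})$, so that $e,f\in\g_{10}$ and $h\in\g_{00}$; this triple is then automatically normal for the big pair $(\g,\g_{0\star})$ as well. The semisimple element $e+f\in\g_{10}$ inherits both $G_{00}$- and $G_{0\star}$-regularity from $e$ by the Kostant--Rallis theory, giving~\eqref{eq:less-restr} directly. The trade-off: the paper's argument hands you an explicit witnessing element and stays within the $\tri$-triple machinery already used throughout; your topological argument avoids that machinery and is arguably more transparent, but is non-constructive. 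Either way the logical content is the same---produce a simultaneously $G_{00}$- and $G_{0\star}$-regular semisimple point of $\g_{10}$, then invoke Kostant--Rallis at $e$ for the differentials.
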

\begin{proof}
Let $\{e,h,f\}\subset \g_{\star 0}$ be a normal $\tri$-triple. If $e$ is both $G_{00}$-regular
and $G_{0\star}$-regular, then the same is true for the semisimple element
$e+f\in \g_{10}$  \cite{kr71}. Hence condition~\eqref{eq:less-restr} is satisfied.
It was also explained above that the $G_{0\star}$-regularity of $e$ implies the linear independence of  the differentials for \un{all} basic invariants in $\bbk[\g_{1\star}]^{G_{0\star}}$.
\end{proof}

We  consider below some applications of Theorem~\ref{thm:main5}(ii). 

\begin{ex}   \label{ex:canonical-invar}
Let $\{\vartheta,\vartheta',\mu\}$ be a canonical triple of involutions of $\g$, i.e.,
$\vartheta,\vartheta'$ are of maximal rank and $\mu$ is quasi-maximal.
The corresponding quaternionic decomposition 

\hbox to \textwidth{
\hfil 
{\setlength{\unitlength}{0.02in}
\begin{picture}(35,27)(0,4)
\put(-12,7){$\g=$}
    \put(8,15){$\g_{00}$}    \put(28,15){$\g_{01}$}
    \put(8,3){$\g_{10}$}      \put(28,3){$\g_{11}$}
\qbezier[20](5,9),(22,9),(40,9)              
\qbezier[20](23,-1),(23,11),(23,24)       
\put(19.9,7){$\oplus$}   
\put(43,7){{\color{my_color}$\vartheta$}}
\put(21,-9){{\color{my_color}$\vartheta'$}}
\end{picture}  \hfil
}}
\vskip2.5ex
\noindent
has a number of good properties, see Proposition~\ref{prop:ss-com-inner} and Remark~\ref{rem:nilp-g1}. We wish to describe the algebras of invariants for all  degenerated
isotropy representations.
By Corollary~\ref{cor:sovpad1}, there are at least four coincidence of \CSS; hence the invariants for the respective degenerated isotropy representations are given by 
Theorem~\ref{thm:inv-degen}.  If $\vartheta$ is inner, i.e.,
$\{\vartheta,\vartheta',\mu\}$ is actually a triad, then all little and big \CSS\ are Cartan
subalgebras and the invariants of all six degenerations are described by 
Theorem~\ref{thm:inv-degen}(iii). But, if $\vartheta$ is not inner,
then there are two degenerated isotropy representations, $(G_{00}\ltimes N_{01}: \g_{10}{\,\propto\,}\g_{11})$ and
$(G_{00}\ltimes N_{10}: \g_{01}{\,\propto\,}\g_{11})$, where Theorem~\ref{thm:inv-degen} does not apply. 
We show that the hypotheses of
Theorem~\ref{thm:main5} are satisfied for them, and hence the their
algebras of invariants are also polynomial. 

Assume below that $\vartheta$ is not inner, so that $\vartheta$ and $\mu$ are not 
conjugate. Then $k_0\ne 0$ and  
\[
\dim\g_{00}=\frac{\dim U-k_1}{2}, \ 
\dim\g_{01}=\dim\g_{10}=\dim\g_{00}+k_1,  \ 
\dim\g_{11}=\dim\g_{00}+\rk\g 
\]
(see the proof of Prop.~\ref{prop:ss-com-inner}).
We also have $\rk\g^\vartheta=\rk\g^{\vartheta'}=k_1$ (Proposition~\ref{k0-and-k1}).
As $\vartheta$ is of maximal rank, any \CSS\  $\ce_{1\star} \subset\g_{1\star}$ is a Cartan subalgebra of $\g$.
Recall that 
$\bbk[\g_{1\star}]^{G_{0\star}}\simeq \bbk[\ce_{1\star}]^{\tilde W}$, where
$\tilde W$ is the generalised Weyl group.
In this case, $\tilde W$ coincides with the usual Weyl group of $\g$ with respect to
$\ce_{1\star}$. Hence $\bbk[\g_{1\star}]^{G_{0\star}}\simeq \bbk[\g]^G$.
Let $\ce_{10}$ be a \CSS\ in $\g_{10}$. Then $\dim\ce_{10}=k_1=\rk\g-k_0$.
Hence $\ce_{10}$ cannot be a \CSS\ in 
$\g_{1\star}$. However, $\ce_{10}$ does contain regular semisimple elements of  $\g$ (see the proof of Prop.~\ref{prop:ss-com-inner}), i.e., \eqref{eq:less-restr} is satisfied.

In this situation, the number $k$ occurring in Theorems~\ref{thm:ci} and \ref{thm:main5}
is $k_0$ and basic invariants $F_1,\dots,F_{k_0}$ are precisely the basic invariants 
in $\bbk[\g]^G$ of odd degrees. (Recall that we have defined $k_0$ as the number of 
even exponents of $\g$.) If $e\in\g_{10}$ is $G_{00}$-regular nilpotent, then
$e$ is also regular nilpotent in $\g_{\star 0}=\g^{\vartheta'}$, 
since this involution of $\g_{\star 0}$ is of maximal rank, see \cite{leva}. 
Therefore, the hypothesis of 
Theorem~\ref{thm:main5}(ii) can be restated as
follows: 

{\it Let $\vartheta$ be an (outer) involution of maximal rank and
$e$ a regular  nilpotent element of $\g^{\vartheta}$. Let $F_1,\dots, F_{k_0}\in \bbk[\g]^G$ be the basic 
invariants of odd degrees. Then 
$(\textsl{d}F_{1})_e,\dots,(\textsl{d}F_{k_0})_e$ are linearly independent.}

We verify this for all relevant simple Lie algebras.

a$_1$) $\g=\mathfrak{sl}_{2n+1}$ and $\g^{\vartheta}=\mathfrak{so}_{2n+1}$.
Here $e$ is regular in the whole of $\g$.

a$_2$) $\g=\mathfrak{sl}_{2n}$ and $\g^{\vartheta}=\mathfrak{so}_{2n}$.
The partition of $e$ is $(2n-1,1)$, i.e., $e$ is a subregular nilpotent element of $\g$. 
Let  $f_i$ be a basic invariant  of degree $i$ ($i=2,3,\dots,2n$).
It is known that, for a subregular nilpotent element $e$,  $(\textsl{d}{f}_{2n})_e=0$
(i.e., for the basic invariant of maximal degree), while
the remaining differentials are linearly independent \cite[Lemma\,5.1]{ri2}.
This remaining set contains all basic invariants of odd degree.

a$_3$) $\g=\mathfrak{so}_{4n+2}$ and $\g^{\vartheta}=\mathfrak{so}_{2n+1}\times
\mathfrak{so}_{2n+1}$. The partition of $e$ is $(2n+1,2n+1)$ and $k_0=1$. 
The only basic invariant of odd degree is the pfaffian $\mathsf{P}$. One  verifies that, 
for a nilpotent element $v$, $(\textsl{d}\mathsf{P})_v=0$ if and only if the partition of $v$ has 
at least three nonzero parts \cite[Lemma\,4.1.1]{ri2}.

a$_4$)  $\g=\GR{E}{6}$ and $\g^{\vartheta}=\mathfrak{sp}_8$. 
Here $k_0=2$, $\deg F_1=5$ and $\deg F_2=9$.
As $e$ appears to be subregular in $\g$, we can again use  \cite[Lemma\,5.1]{ri2}.

{\sl Thus, in case of  canonical decompositions, we can describe the algebras of invariants
for all six degenerated isotropy representations.}
\end{ex}

\begin{ex}   \label{ex:esche}
Let $\g$ be of type $\GR{E}{6}$. According to \cite[Table\,1]{kollross},
there are  three commuting involutions with the following fixed-point subalgebras:

\quad $\sigma_1, \sigma_2$:  \   $\GR{F}{4}$; \quad
$\sigma_3$:  \ $\GR{D}{5}\oplus\te_1$.

\noindent As $\{\sigma_1,\sigma_2\}$ is supposed to be a dyad, this can also be 
verified using the restricted
root system for $\GR{E}{6}/\GR{F}{4}$, which is reduced and of type $\GR{A}{2}$
(see Section~\ref{sect2}).
Here $\g_{00}=\mathfrak{so}_9$ and the $\g_{00}$-modules $\g_{ij}$ are:

\begin{center}
$\g_{01}\simeq \g_{10}\simeq \mathsf R_{\varpi_4}$, \ $\g_{11}\simeq \mathsf R_{\varpi_1}+
\mathsf R_0$,
\end{center}

\noindent 
where $\varpi_i$'s are fundamental weights of $\mathfrak{so}_9$ and $\mathsf R_\lb$ stands
for  the simple module with highest weight $\lb$.
Hence $\dim\g_{01}=\dim\g_{10}=16$ and $\dim\g_{11}=10$.
Here $\dim\ce_{10}=\dim\ce_{01}=1$ and $\dim\ce_{11}=2$. By Theorem~\ref{thm:sovpad2},
$\ce_{11}$ is also a \CSS\ in $\g_{1\star}$ or $\g_{\star 1}$; and these are the only 
coincidences of \CSS\ here. In view of these coincidence and 
Theorem~\ref{thm:inv-degen}(ii),
the algebra of invariants for two isomorphic degenerated isotropy representations,
$(G_{00}\ltimes N_{01}: \g_{11}{\,\propto\,}\g_{10})$ and 
$(G_{00}\ltimes N_{10}: \g_{11}{\,\propto\,}\g_{01})$,
is readily described:
\[
\bbk[(\mathsf R_{\varpi_1}{+}
\mathsf R_0){\,\propto\,}\mathsf R_{\varpi_4}]^{\text{Spin}_9\ltimes \exp(\mathsf R_{\varpi_4}^a)}\simeq \bbk[\mathsf R_{\varpi_1}+
\mathsf R_0]^{\text{Spin}_9} .
\]
Therefore,  this algebra has the basic invariants of degree 
$1$ and $2$. The representation 
\beq   \label{spin9-1}
      \bigl(\text{Spin}_9\ltimes \exp(\mathsf R_{\varpi_4}^a): (\mathsf R_{\varpi_1}{+}
\mathsf R_0){\,\propto\,}\mathsf R_{\varpi_4}\bigr)
\eeq
is a degeneration of the isotropy representation of
the symmetric space $\GR{E}{6}/\GR{F}{4}$, i.e., of 
$(\GR{F}{4}:\mathsf R)$, where $\dim\mathsf R=26$ and the degrees of basic invariants are
$2$ and $3$. The other degeneration of the same isotropy representation is
\beq   \label{spin9-2}
      \bigl(\text{Spin}_9\ltimes \exp(\mathsf R_{\varpi_4}^a): \mathsf R_{\varpi_4}{\,\propto\,}(\mathsf R_{\varpi_1}{+}
\mathsf R_0)\bigr) .
\eeq
Here we can exploit the fact that any $G_{00}$-regular nilpotent element in 
$\g_{10}\simeq \mathsf R_{\varpi_4}$ appears to be $G_{0\star}$-regular  in 
$\g_{1\star}\simeq \mathsf R$. Indeed, using the Satake diagram for
the symmetric pair $(\GR{F}{4}, \mathfrak{so}_9)$, one verifies that the 
$\text{Spin}_9$-regular nilpotent element of $\g_{10}$ belong to the nilpotent $\GR{F}{4}$-orbit $\co$,
denoted $\GRt{A}{2}$. On the other hand, using the Satake diagram for
the symmetric pair $(\GR{E}{6}, \GR{F}{4})$, one verifies that the 
$\GR{F}{4}$-regular nilpotent element of $\g_{1\star}$ belongs to the nilpotent 
$\GR{E}{6}$-orbit $\tilde\co$, denoted $2\GR{A}{2}$. It is not hard to verify using the respective weighted Dynkin diagrams that $\tilde\co\cap \f_4=\co$.

Since $\dim\ce_{1\star}-\dim\ce_{10}=1$, $X=\ov{G_{0\star}{\cdot}\g_{10}}$ \ is a hypersurface in
$\g_{1\star}$, and this hypersurface must be the zero set of the basic invariant of degree $3$ in $\bbk[\mathsf R]^{\GR{F}{4}}$.
Finally, applying Theorem~\ref{thm:main5} and Prop.~\ref{prop:dostat-usl} shows that 
$\bbk[\mathsf R_{\varpi_4}{\,\propto\,}(\mathsf R_{\varpi_1}{+}\mathsf R_0)]^{\text{Spin}_9\ltimes \exp(\mathsf R_{\varpi_4}^a)}$ is a polynomial algebra with basic invariants of degree $2$ and $3$.

Thus, the dual representations \eqref{spin9-1} and \eqref{spin9-2} have polynomial algebras
of invariants, but the degrees of basic invariants are different.

Set $V=(\mathsf R_{\varpi_1}{+}\mathsf R_0){\,\propto\,}\mathsf R_{\varpi_4}$, 
$V^*=\mathsf R_{\varpi_4}{\,\propto\,}(\mathsf R_{\varpi_1}{+}\mathsf R_0)$, and 
$K=\text{Spin}_9\ltimes \exp(\mathsf R_{\varpi_4}^a)$.
Since $\dim V\md K=\dim V^*\md K=2$ and $K$ has no rational characters, one easily derives that both morphisms
$\pi_V:  V\to V\md K\simeq \mathbb A^2$ and 
$\pi_{V^*}: V^* \to V^*\md K\simeq \mathbb A^2$ are equidimensional; hence
$\bbk[V]$ is a free $\bbk[V]^K$-module, and likewise for $V^*$.
\end{ex}

\section{Problems and observations}  
\label{sect6}\nopagebreak
\subsection{}
In \cite{coadj07}, I suggested that  the coadjoint representation
of a $\BZ_2$-contraction of $\g$  always has a polynomial algebra of invariants.
However, recent work of O.\,Yakimova \cite{zhena}
demonstrates that this is not always the case. 
Therefore, there ought to be quaternionic decompositions of a {\sl simple\/} Lie algebra $\g$ such that some degenerated isotropy
representations $(G_{00}\ltimes N_\beta : \g_\ap{\,\propto\,}\g_\gamma)$ do not have
a polynomial algebra of invariants.  On the other hand, results of Sections~\ref{sect4} and
\ref{sect5} show that in many cases the algebra 
$\bbk[\g_\ap{\,\propto\,}\g_\gamma]^{G_{00}\ltimes N_\beta}$ 
is polynomial. This raises the following

\begin{quest}   \label{vopros1}
What are more precise necessary and/or sufficient conditions for 
$\bbk[\g_\ap{\,\propto\,}\g_\gamma]^{G_{00}\ltimes N_\beta}$ to be polynomial?
\end{quest}

In Section~\ref{sect4}, we proved that if a \CSS\ $\ce_\ap\subset\g_\ap$ is also 
a \CSS\ in $\g_\ap\oplus\g_\gamma$, then $\bbk[\g_\ap{\,\propto\,}\g_\gamma]$
is a free $\bbk[\g_\ap{\,\propto\,}\g_\gamma]^{G_{00}\ltimes N_\beta}$-module, i.e.,
the quotient morphism 
\[
 \pi: \g_\ap{\,\propto\,}\g_\gamma\to (\g_\ap{\,\propto\,}\g_\gamma)\md G_{00}\ltimes N_\beta
 \simeq \mathbb A^m
\]
is equidimensional. This raises our next

\begin{quest}
What are more precise necessary and/or sufficient conditions for 
$\bbk[\g_\ap{\,\propto\,}\g_\gamma]$ to be a free
$\bbk[\g_\ap{\,\propto\,}\g_\gamma]^{G_{00}\ltimes N_\beta}$-module?
\end{quest}

Of course, both Questions are prompted by the fact that the respective properties are 
always satisfied for the initial (big) isotropy representations, see Section~\ref{sect1}.

\subsection{}  \label{subs:dif-op}
Let $Q$ be an algebraic group with $\Lie(Q)=\q$. Suppose that
$\sigma$ is an involution of $Q$  and $Q_0$ is
the identity component of $Q^\sigma$. Let $H$ be any subgroup between
$Q_0$ and $Q^\sigma$ such that $Q/H$ is connected (this is always the case if $Q$ is connected).  
Then the algebra, $\mathcal D(Q/H)$, 
of left invariant differential operators on the homogeneous space $Q/H$ is commutative~\cite{smoke},\cite{duflo}.
Let $\q=\q_0\oplus\q_1$ be the corresponding $\BZ_2$-grading. The following is
proved in \cite[Lemma\,4.2]{gonz-helg}:
\vskip.7ex
\centerline{
{\it If the algebra $\eus S(\q_1)^{H}=\bbk[\q_1^*]^{H}$ is polynomial, then so is\/ 
$\mathcal D(Q/H)$.}}
\noindent
In fact, there is a canonical linear bijection between the two algebras that transforms the free 
generators of the former to the free generators of the latter.

Therefore, our results on quaternionic decompositions provide a description
of invariant differential operators for certain degenerations of symmetric spaces of $G$.
Recall that the group $K_{01}=G_{00}\ltimes N_{01}$ can be regarded as
the identity component of a symmetric subgroup in two different ways:

a)  $K_{01}$ is the identity component of
$G\langle\sigma_2\rangle^{\sigma_1}$, and the corresponding isotropy representation is
$(K_{01}:  \g_{10}{\,\propto\,}\g_{11})$;
 
b) $K_{01}$ is the identity component of $G\langle\sigma_3\rangle^{\sigma_1}$,
and the corresponding isotropy representation is
$(K_{01}:  \g_{11}{\,\propto\,}\g_{10})$.

Taking into account that $(\g_{10}{\,\propto\,}\g_{11})^*= \g_{11}{\,\propto\,}\g_{10}$,
we obtain the following:

\begin{utv} If\/  $\bbk[\g_{10}{\,\propto\,}\g_{11}]^{K_{01}}$ is a polynomial algebra, then so 
is\/ $\mathcal D(G\langle\sigma_3\rangle/K_{01})$;
\\
if\/  $\bbk[\g_{11}{\,\propto\,}\g_{10}]^{K_{01}}$ is a polynomial algebra, then so 
is\/ $\mathcal D(G\langle\sigma_2\rangle/K_{01})$.
\end{utv}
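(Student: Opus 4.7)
\begin{proof*}
The plan is to deduce both statements directly from the cited result of Helgason and Gonz\'alez \cite[Lemma\,4.2]{gonz-helg}, applied to the two $\BZ_2$-contractions $Q=G\langle\sigma_2\rangle$ and $Q=G\langle\sigma_3\rangle$, each equipped with the involution~$\sigma_1$. The point is that the whole setup of Section~\ref{sect4} was arranged precisely so that $(Q,\sigma_1,K_{01})$ fits into the framework of \ref{subs:dif-op}.

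First I would verify the hypotheses of \cite[Lemma\,4.2]{gonz-helg} in this setting. The group $Q=G\langle\sigma_j\rangle=G_{\star,j-1}\ltimes\exp(\g_{\star,2-j}^a)$ is connected (it is a semi-direct product of two connected algebraic groups), so the homogeneous space $Q/K_{01}$ is connected as well. As is recalled after Equation~\eqref{eq:N01:V}, the involution $\sigma_1$ lifts to both $G\langle\sigma_2\rangle$ and $G\langle\sigma_3\rangle$, and $K_{01}$ is the identity component of the corresponding fixed-point subgroup. Hence the hypothesis ``$H$ lies between $Q_0$ and $Q^{\sigma_1}$ and $Q/H$ is connected'' of \cite[Lemma\,4.2]{gonz-helg} holds with $H=K_{01}$.

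Next I would identify the $(-1)$-eigenspaces. By Claim~\ref{utv:claim}(1), the $(-1)$-eigenspace of $\sigma_1$ acting on $\q=\g\langle\sigma_2\rangle$ is the $K_{01}$-module $V=\g_{10}{\,\propto\,}\g_{11}$; by Claim~\ref{utv:claim}(2), the analogous eigenspace in $\g\langle\sigma_3\rangle$ is $V^*=\g_{11}{\,\propto\,}\g_{10}$. Therefore \cite[Lemma\,4.2]{gonz-helg} gives the implications
\[
\bbk[V^*]^{K_{01}} \text{ polynomial} \ \Longrightarrow\ \mathcal D(G\langle\sigma_2\rangle/K_{01}) \text{ polynomial},
\]
\[
\bbk[V]^{K_{01}} \text{ polynomial} \ \Longrightarrow\ \mathcal D(G\langle\sigma_3\rangle/K_{01}) \text{ polynomial}.
\]
Finally, I would invoke the Lemma of Section~\ref{sect4} that identifies $V$ and $V^*$ as mutually dual $K_{01}$-modules: this guarantees that the polynomiality hypotheses on $\bbk[V^*]^{K_{01}}$ and $\bbk[V]^{K_{01}}$ in the two implications above coincide with the respective polynomiality hypotheses of our claim on $\bbk[\g_{10}{\,\propto\,}\g_{11}]^{K_{01}}$ and $\bbk[\g_{11}{\,\propto\,}\g_{10}]^{K_{01}}$. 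No step presents any real obstacle: the substance of the result is entirely carried by \cite[Lemma\,4.2]{gonz-helg}, and our contribution is the clean bookkeeping of which $\BZ_2$-contraction sees which module as its $(-1)$-eigenspace.
\end{proof*}
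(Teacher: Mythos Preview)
Your proposal is correct and follows exactly the paper's own argument: the Claim is stated immediately after the discussion that constitutes its proof, namely the quotation of \cite[Lemma\,4.2]{gonz-helg}, the identification of the $(-1)$-eigenspaces via Claim~\ref{utv:claim}, and the duality $(\g_{10}{\,\propto\,}\g_{11})^*=\g_{11}{\,\propto\,}\g_{10}$. Your only superfluous step is the final paragraph: once you have written $V=\g_{10}{\,\propto\,}\g_{11}$ and $V^*=\g_{11}{\,\propto\,}\g_{10}$, the matching of hypotheses is tautological; the duality lemma is really used one step earlier, to pass from ``$\q_1=\g_{11}{\,\propto\,}\g_{10}$'' to ``$\q_1^*=\g_{10}{\,\propto\,}\g_{11}$'' in the $G\langle\sigma_3\rangle$ case.
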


It might be interesting to explore a  relationship between the joint eigenfunctions for  $\mathcal D(G/G_{0\star})$ and $\mathcal D(G\langle\sigma_3\rangle/K_{01})$.
Also, if $\ce_{10}$ is a \CSS\ in $\g_{01}\oplus\g_{11}$, then
$\bbk[\g_{10}{\,\propto\,}\g_{11}]^{K_{01}}\simeq \bbk[\g_{10}]^{G_{00}}$, which seems
to suggest that
there might be a direct relation between the polynomial algebras of differential operators
 $\mathcal D(G\langle\sigma_3\rangle/K_{01})$ and $\mathcal D(G_{\star 0}/G_{00})$.

\subsection{} 
In \cite{gonz-helg}, Gonzalez and Helgason determined the algebra of invariant differential operators for an interesting non-reductive symmetric pair $(Q,Q^\sigma)$. 
In their exposition, the (non-connected) group $Q$ is  the semi-direct product of
the orthogonal group $\eus O_n=\eus O(V)$ and the standard $\eus O(V)$-module $V$, i.e., using our notation, $\q=\sov\ltimes V^a$ and 
$Q=\eus O(V)\ltimes \exp(V^a)$. Then they consider an involution $\sigma$ of $Q$ 
such that $Q^\sigma= (\eus O(V_1)\ltimes \exp(V_1^a))\times \eus O(V_2)$, where 
$V=V_1\oplus V_2$ is an orthogonal direct sum. Set $p=\dim V_1$, so that 
$n-p=\dim V_2$. 

Our observation is that $Q/Q^\sigma$ is  a degeneration of the symmetric space
$\eus O_{n+1}/\eus O_{p+1}\times \eus O_{n-p}$, and this degeneration is related to 
a quaternionic decomposition of $\g=\mathfrak{so}_{n+1}$. 
To get this, we assume that $\mathfrak{so}_{n+1}=
\mathfrak{so}(\tilde V)$ consists of usual skew-symmetric matrices of order $n+1$ with 
respect to a certain basis of $(n+1)$-dimensional
space $\tilde V$ and consider the  involutions 
$\sigma_i$ ($i=1,2,3$) \ defined by the  diagonal matrices $s_1,s_2,s_3\in \eus O(\tilde V)$ :
\vskip.7ex
$s_1=
\text{diag}(\underbrace{1,\dots,1}_{p+1},\underbrace{-1,\dots,-1}_{n-p})$,
\quad 
$s_2=\text{diag}(\underbrace{1,\dots,1}_{p},\underbrace{-1,\dots,-1}_{n-p+1})$,
\quad 
$s_3=s_1s_2$.

\vskip.8ex
Let $V_1$ (resp. $V_2$) be the linear span of the first $p$ (resp. last $n-p$) basis vectors.
Then $\g_{00}=\mathfrak{so}(V_1)\times \mathfrak{so}(V_2)$ and the whole quaternionic
decomposition is:
\begin{center}
\setlength{\unitlength}{0.02in}
\begin{picture}(90,30)(-20,0)
\put(-44,7){$\g=$}
    \put(-25,15){$\mathfrak{so}(V_1){\times} \mathfrak{so}(V_2)$}    \put(45,15){$V_1$}
    \put(0,0){$V_2$}      \put(38,0){$V_1\otimes V_2$}
\qbezier[50](-24,9),(23,9),(70,9)            
\qbezier[20](33,-3),(33,11),(33,24)      
\put(29.9,7){$\oplus$}   
\put(31,-12){{\color{my_color}$\sigma_2$}}
\put(80,7){{\color{my_color}$\sigma_1$}}
\end{picture}  
\end{center}
\vskip3.5ex
(i.e., $\g_{10}\simeq V_2$ as $\g_{00}$-module, etc.)
Here $\tilde V=V_1\oplus V_2\oplus \bbk=V\oplus \bbk$, \ 
$\g^{\sigma_1}=\mathfrak{so}(V_1\oplus \bbk)\times\mathfrak{so}(V_2)$, 
$\g^{\sigma_2}=\mathfrak{so}(V_2\oplus \bbk)\times \mathfrak{so}(V_1)$,
and $\g^{\sigma_3}=\mathfrak{so}(V)$.
In our notation, the Lie algebra $\Lie(Q)=\q$ considered in \cite{gonz-helg} is nothing but
$\g\langle\sigma_3\rangle$;
and  Theorem\,3.1 in \cite{gonz-helg} essentially means that, 
for $H:=(\eus O(V_1)\times \eus O(V_2))\ltimes \exp(V_1^a)$,
the algebra 
$\bbk[V_2{\,\propto\,}(V_1\otimes V_2)]^H$ is polynomial, of 
Krull dimension  $\min\{p+1, n-p\}$. 
In our setting, the dimension formula is explained by Corollary~\ref{cor:4.12} and 
the fact that $\rk (\eus{SO}_{n+1}/\eus{SO}_{p+1}\times \eus{SO}_{n-p})=\min\{p+1,n-p\}$.
The identity component of 
$H$ is 
\[
K_{01}=(\eus{SO}(V_1)\times \eus{SO}(V_2))\ltimes \exp(V_1^a) ,
\] 
and 
one can prove that, for this degenerated isotropy representation, we have a materialisation 
of the phenomenon of Remark~\ref{rmk:bullet}, i.e., 
$\gr^\bullet (\bbk[\g_{1\star}]^{G_{0\star}})=\bbk[V_2{\,\propto\,}(V_1\otimes V_2)]^{K_{01}}$.
Note that $G_{0\star}=\eus{SO}(V_1\oplus\bbk)\times \eus{SO}(V_2)$ and the basic invariants of $\bbk[\g_{1\star}]^{G_{0\star}}$ have degrees 
\[ 
\begin{cases}
2,4,\dots ,2\min\{p+1,n-p\}  & \ \text{ \ if \ } p+1\ne n-p \ , \\
2,4,\dots, 2p, p+1   & \ \text{ \ if \ } p+1=n-p \ .    
\end{cases}
\]
Therefore, the same is true for $\bbk[V_2{\,\propto\,}(V_1\otimes V_2)]^{K_{01}}$.
Since $\dim \ce_{10}=1$ and $\dim\ce_{1\star}=\min\{p+1, n-p\}$, the 
coincidence of \CSS\ occurs only if $n-p=1$. 

The advantage of using quaternionic decompositions
is that we immediately get other interesting possibilities. For the dual $K_{01}$-module
$(V_1\otimes V_2){\,\propto\,}V_2$, we may compare the Cartan subspaces 
$\ce_{11}\subset V_1\otimes V_2$ and $\ce_{1\star}$. 

\textbullet \ \ Since $\dim\ce_{11}=\min\{p,n-p\}$ and $\dim\ce_{1\star}=\min\{p+1, n-p\}$,
we have a coincidence of \CSS\ if and only if $n-p\le p$, and then 
$\bbk[(V_1\otimes V_2){\,\propto\,}V_2]^{K_{01}}\simeq
\bbk[V_1\otimes V_2]^{\eus{SO}(V_1)\times \eus{SO}(V_2)}$. The latter is a polynomial algebra whose degrees of basic invariants are 

$2,4,\dots,2(n-p)$, \ if $n-p<p$; \quad 
$2,4,\dots,2p-2, p$, \ if $n-p=p$.

\textbullet \ \ For $p+1=n-p$, one can notice that a $G_{00}$-regular nilpotent element
in $\g_{11}\simeq V_1\otimes V_2$ is actually regular in the whole of $\g$.
Hence, Proposition~\ref{prop:dostat-usl} applies to 
$K_{01}: (V_1\otimes V_2){\,\propto\,}V_2$ and hence
$\bbk[(V_1\otimes V_2){\,\propto\,}V_2]^{K_{01}}$ is polynomial.

\textbullet \ \ Even for $p+1< n-p$, we can prove using the `contraction procedure' for 
$G_{0\star}$-invariants that $\bbk[(V_1\otimes V_2){\,\propto\,}V_2]^{K_{01}}$ is a 
polynomial algebra. The details will appear elsewhere.

As explained in Subsection~\ref{subs:dif-op}, 
this implies that 
$\mathcal D(G\langle\sigma_2\rangle/K_{01})$ is a polynomial algebra, where
$G=\eus{SO}(V_1\oplus V_2\oplus \bbk)$ and 
$G\langle\sigma_2\rangle=\bigl(\eus{SO}(V_1)\times \eus{SO}(V_2\oplus \bbk)\bigr)\ltimes (V_1\otimes (V_2\oplus\bbk))$.

\vskip1ex
\noindent
{\small  {\bf Acknowledgements.} Part of this work was done while I was able to use
rich facilities of the Max-Planck Institut f\"ur Mathematik (Bonn).}


\begin{thebibliography}{Pa95}

\bibitem{leva} {\rusc L.V.~Antonyan}. {\rus O klassifikacii 
odnorodnykh {e1}lementov ${\BZ}_2$-graduirovannykh poluprostykh algebr Li},
{\rusi  Vestnik Mosk. Un-ta,
Ser. Matem. Meh.}, {\rus N0}\,2 (1982), 29--34 (Russian). English 
translation: {\sc L.V.~Antonyan}. On classification of
homogeneous elements of ${\BZ}_2$-graded semisimple Lie algebras,
{\it  Moscow Univ. Math. Bulletin}, {\bf 37}\,(1982), 
{\rus N0}\,2, 36--43.

\bibitem{bb63}  {\sc A.~Bialynicki-Birula}.
On homogeneous affine spaces of linear algebraic groups,
{\it Amer. J. Math.} {\bf 85}\,(1963), 577--582.

\bibitem{bulois}
{\sc M.~Bulois}.
Sheets of symmetric Lie algebras and Slodowy slices,  
{\it J. Lie Theory}, {\bf 21}\,(2011), 1--54. 

\bibitem{duflo}
{\sc M.~Duflo}. Op\'erateurs diff\'erentiels invariants sur un espace sym\'etrique.  
{\it C. R. Acad. Sci.}, S\'er. A-B {\bf 289}\,(1979), no.~2, A135--A137.

\bibitem{gonz-helg}
{\sc F.~Gonzalez} and {\sc S.~Helgason}.
Invariant differential operators on Grassmann manifolds,
{\it Adv. in Math.}, {\bf 60}\,(1986), 81--91. 

\bibitem{helg}
{\sc S.~Helgason}. ``Differential geometry, Lie groups, and symmetric spaces'', 
New York--London: Academic Press, 1978. 

\bibitem{helm-gerry1}  {\sc A.\,Helminck} and {\sc G.\,Schwarz}.
Orbits and invariants associated with a pair of commuting involutions,
{\it Duke Math. J.}, {\bf 106}\,(2001), 237--279.

\bibitem{helm-gerry2}  {\sc A.\,Helminck} and {\sc G.\,Schwarz}.
Smoothness of quotients associated with a pair of commuting involutions,
{\it Canad. J. Math.}, {\bf 56}\,(2004),  945--962.

\bibitem{kollross} {\sc A.\,Kollross}.
Exceptional $\mathbb Z_2\times \mathbb Z_2$-symmetric spaces,
{\it Pacific J. Math.},
{\bf 242}, no.\,1 (2009), 113--130.

\bibitem{ko63} {\sc B.\,Kostant}. Lie group representations in
polynomial rings, {\it  Amer. J. Math.}, {\bf 85}\,(1963), 327--404.

\bibitem{kr71} {\sc B.~Kostant} and {\sc S.~Rallis}. Orbits and 
representations associated with symmetric spaces, 
{\it Amer. J. Math.}, {\bf 93}\,(1971), 753--809.

\bibitem{lu72} {\sc D.~Luna}. Sur les orbites ferm\'ees 
des groupes alg\'ebriques r\'eductifs, {\it  Invent. Math.}, {\bf 16}\,(1972), 1--5.

\bibitem{matsuki} {\sc T.\,Matsuki}. 
Double coset decompositions of reductive Lie groups arising from two
involutions, {\it J. Algebra}, {\bf  197}\,(1997), 49--91.

\bibitem{aif99} 
{\sc D.~Panyushev}. On spherical nilpotent orbits and beyond, 
{\it Ann. Inst. Fourier}, {\bf 49}\,(1999), 1453--1476.

\bibitem{theta05} 
{\sc D.~Panyushev}. On invariant theory of $\theta$-groups, 
{\it J. Algebra}, {\bf 283}\,(2005), 655--670.

\bibitem{rims07} 
{\sc D.~Panyushev}. Semi-direct products of Lie algebras and their invariants,
{\it Publ. R.I.M.S.}, {\bf 43}, no.\,4 (2007), 1199--1257.

\bibitem{coadj07} 
{\sc D.~Panyushev}. 
On the coadjoint representation of ${\mathbb Z}_2$-contractions of reductive Lie
algebras, {\it Adv. Math.}, {\bf 213}\,(2007), 380--404.

\bibitem{ri82}  
{\sc R.W.~Richardson}.
Orbits, invariants, and representations associated to involutions of reductive groups, 
{\it Invent. Math.}, {\bf 66}\,(1982), no.\,2, 287--312.

\bibitem{ri87}  
{\sc R.W.~Richardson}.
Normality of $G$-stable subvarieties of a semisimple Lie algebra,
In: {\it ``Algebraic Groups''},  
Lecture Notes Math., Berlin: Springer,  {\bf 1271}\,(1987), 243--264.

\bibitem{ri2}
{\sc R.W.~Richardson}. Derivatives of invariant polynomials on a semisimple 
Lie algebra, in: {\it ``Miniconference on harmonic analysis
  and operator algebras''} (Canberra, 1987), 228--241, Proc. Centre Math. Anal. 
Austral. Nat. Univ., 15, Austral. Nat. Univ., Canberra, 1987.

\bibitem{ros61} {\sc M.~Rosenlicht}. On quotient varieties and the affine embedding of 
certain homogeneous spaces, {\it Trans. Amer. Math. Soc.}  {\bf 101}\,(1961),  211--223. 

\bibitem{smoke}
{\sc W.~Smoke}. Commutativity of the invariant differential operators on a symmetric space,
{\it Proc. Amer. Math. Soc.}, {\bf 19}\,(1968), 222--224.

\bibitem{sp74} {\sc T.A.~Springer}.
Regular elements of finite reflection groups, {\it  Invent. Math.}
{\bf 25}\,(1974), 159--198.

\bibitem{vergne} {\sc M.~Vergne}.
Instantons et correspondance de Kostant-Sekiguchi,  {\it C. R. Acad. Sci.} Paris S\'er.~I, 
t.{\bf 320}, no.\,8~(1995),  901--906.

\bibitem{vinb}
{\sc E.B.~Vinberg}.
Short $\mathrm{SO}_3$-structures on simple Lie algebras and associated 
quasielliptic planes. In:  E.\,Vinberg (ed.), ``Lie groups and invariant theory'', AMS 
Translations, Series 2,  {\bf 213}\,(2005), 243--270.

\bibitem{VO}  {\rusc {E1}.B.~Vinberg, A.L.~Oniwik}. {\rus
``Seminar po gruppam Li i algeb\-rai\-qes\-kim gruppam''}. {\rus Moskva: ``Nauka"} 1988
(Russian). English translation: {\sc A.L.~Onishchik} and {\sc E.B.~Vinberg}.
``Lie groups and algebraic groups'', Berlin: Springer, 1990.

\bibitem{t55}
{\rusc {E1}.B.~Vinberg, V.L.~Popov}. {\rus ``Teoriya invariantov"}, {\rus V kn.:  
Sovr. probl. matem., Fundam. 
napravl., t.\,55,
s.}\,137--309. {\rus Moskva:~VINITI} 1989 (Russian).
English translation: 
{\sc V.L.~Popov} and {\sc E.B.~Vinberg}. ``Invariant theory", In: {\it  Algebraic Geometry IV}
(Encyclopaedia Math. Sci., vol.~55, pp.~123--284) 
Berlin: 
Springer 1994.

\bibitem{zhena} {\sc O.S.~Yakimova}.     Symmetric invariants of $\BZ_2$-contractions,
({\it in preparation}).

\end{thebibliography}
\end{document}